\documentclass[10pt, reqno]{amsart}
\usepackage[margin=1in]{geometry}
\usepackage{enumerate,amsmath,amsthm,wasysym,bbold,tensor,combelow,epigraph}
\usepackage{amssymb}
\usepackage{amsfonts}
\usepackage{mathscinet}
\usepackage[dvipsnames]{xcolor}
\usepackage{tikz-cd}
\usepackage{tikz,keyval,pgfsys,graphicx}

%\AtEveryBibitem{\clearfield{url}}
%\AtEveryBibitem{\clearfield{doi}}
%\AtEveryBibitem{\clearfield{issn}}

\newtheorem{thm}{\normalfont\scshape Theorem}[section]

\newtheorem*{lem*}{\normalfont\scshape Lemma}
\newtheorem{prop}[thm]{\normalfont\scshape Proposition}
\newtheorem{lem}[thm]{\normalfont\scshape Lemma}

\newtheorem{conj}[thm]{\normalfont\scshape Conjecture}

\theoremstyle{definition}
\newtheorem{defn}[thm]{\normalfont\scshape Definition}

\theoremstyle{remark}
\newtheorem{rem}[thm]{Remark}
\theoremstyle{remark}

\allowdisplaybreaks

\begin{document}

\binoppenalty=10000
\relpenalty=10000

\numberwithin{equation}{section}

\newcommand{\sss}{\mathfrak{s}}
\newcommand{\rrr}{\mathfrak{r}}
\newcommand{\ccc}{\mathfrak{c}}
\newcommand{\nnn}{\mathfrak{n}}
\newcommand{\ch}{\mathrm{ch}}
\newcommand{\qqq}{\mathfrak{q}}
\newcommand{\ddd}{\mathfrak{d}}
\newcommand{\fff}{\mathfrak{f}}
\newcommand{\QQ}{\mathbb{Q}}
\newcommand{\ZZ}{\mathbb{Z}}
\newcommand{\Hilb}{\mathrm{Hilb}}
\newcommand{\CC}{\mathbb{C}}
\newcommand{\PP}{\mathcal{P}}
\newcommand{\Hom}{\mathrm{Hom}}
\newcommand{\Rep}{\mathrm{Rep}}
\newcommand{\MM}{\mathfrak{M}}
\newcommand{\gl}{\mathfrak{gl}}
\newcommand{\ppp}{\mathfrak{p}}
\newcommand{\VV}{\mathcal{V}}
\newcommand{\NN}{\mathbb{N}}
\newcommand{\OO}{\mathcal{O}}
\newcommand{\HH}{\mathcal{H\kern-.44em H}}
\newcommand{\git}{/\kern-.35em/}
\newcommand{\KK}{\mathbb{K}}
\newcommand{\Proj}{\mathrm{Proj}}
\newcommand{\quot}{\mathrm{quot}}
\newcommand{\core}{\mathrm{core}}
\newcommand{\Sym}{\mathrm{Sym}}
\newcommand{\FF}{\mathbb{F}}
\newcommand{\Span}[1]{\mathrm{span}\{#1\}}
\newcommand{\UTor}{U_{\qqq,\ddd}(\ddot{\mathfrak{sl}}_\ell)}
\newcommand{\Uaff}{U_{\qqq,\ddd}(\dot{\mathfrak{sl}}_\ell)}
\newcommand{\Uaffh}{U_{\qqq,\ddd}^h(\dot{\mathfrak{sl}}_\ell)}
\newcommand{\Uaffv}{U_{\qqq,\ddd}^v(\dot{\mathfrak{sl}}_\ell)}
\newcommand{\UAff}{U_{\qqq,\ddd}(\dot{\mathfrak{gl}}_\ell)}
\newcommand{\UAffh}{U_{\qqq,\ddd}^h(\dot{\mathfrak{gl}}_\ell)}
\newcommand{\UAffv}{U_{\qqq,\ddd}^v(\dot{\mathfrak{gl}}_\ell)}
\newcommand{\Heis}{\mathcal{H}}
\newcommand{\Heish}{\mathcal{H}_h}
\newcommand{\Heisv}{\mathcal{H}_v}
\newcommand{\Sss}{\mathcal{S}}

\title{Shuffle approach to wreath Pieri operators}
\author{Joshua Jeishing Wen}
\address{Department of Mathematics, Northeastern University, Boston, MA, USA}
\email{j.wen@northeastern.edu}
\maketitle

\begin{abstract}
We describe a way to study and compute Pieri rules for wreath Macdonald polynomials using the quantum toroidal algebra.
The Macdonald pairing can be naturally generalized to the wreath setting, but the wreath Macdonald polynomials are no longer collinear with their duals.
We establish the relationship between these dual polynomials and the quantum toroidal algebra, and we outline a way to compute norm formulas.
None of the aforementioned formulas are successfully computed in this paper.
\end{abstract}

\section{Introduction}
In symmetric function theory, one finds tremendous utility in having precise control over creation and annihilation.
Specifically, by creation we mean some form of \textit{Pieri rules}.
The point of this paper is to establish a way to study Pieri rules for \textit{wreath Macdonald polynomials}.
To motivate this, let us first consider the simplest case.
Let $\Lambda$ be the ring of symmetric functions in infinitely many variables, $e_n$ be the $n$th elemetary symmetric function, $h_n$ be the $n$th complete symmetric function, and $s_\lambda$ be the Schur function indexed by the partition $\lambda$.
The Pieri rules take the form (cf. \cite{Mac}):
\begin{align}
\label{HSPieri}
h_ns_\mu&= \sum_{\lambda}s_\lambda\\
e_ns_\mu&= \sum_{\lambda}s_\lambda
\label{ESPieri}
\end{align}
In both formulas, the summation is over $\lambda$ containing $\mu$ such that $|\lambda\backslash\mu|=n$.
For (\ref{HSPieri}), we require $\lambda\backslash\mu$ to contain no \textit{vertically} adjacent boxes, whereas for (\ref{ESPieri}), we require $\lambda\backslash\mu$ to contain no \textit{horizontally} adjacent boxes.

For annihilation, we consider the \textit{Hall pairing} $\langle-, -\rangle$ on $\Lambda$ under which $\left\{ s_\lambda \right\}$ form an orthonormal basis and take the adjoint to the Pieri rules.
Although it is not usually presented in this fashion, one can use (\ref{HSPieri}) and the fact that the complete symmetric function $h_\lambda$ is dual to the \textit{monomial symmetric function} $m_\lambda$ under $\langle - , -\rangle$ to obtain the \textit{tableaux sum formula} for $s_\lambda$.
Namely, after specializing to $n$ variables, we have
\[
s_\lambda(x_1,\ldots, x_n)=\sum_{T}x^T
\]
Here, the summation is over column-strict Young tableaux of shape $\lambda$.
Such a tableau $T$ can be viewed as a nested sequence of subpartitions:
\[
\varnothing=\lambda^{(0)}\subset\lambda^{(1)}\subset\cdots\subset\lambda^{(n)}=\lambda
\]
We set $x^T$ as the monomial where the exponent of $x_i$ is $|\lambda^{(i)}\backslash\lambda^{(i-1)}|$.
This is but one facet of the rich theory of skew Schur functions.

\subsection{The Macdonald case}
The \textit{Macdonald polynomials} $\left\{ P_\lambda \right\}$ are a basis of the twice-deformed ring of symmetric functions $\Lambda_{q,t}:=\Lambda\otimes \CC(q,t)$.
They can be obtained by deforming $\langle-,-\rangle$ into the \textit{Macdonald pairing} $\langle-,-\rangle_{q,t}$ and performing the Gram-Schmidt process on $\left\{ s_\lambda \right\}$ with respect to the new pairing and the dominance order on partitions.
$P_\lambda$ is normalized so that its leading term $s_\lambda$ has coefficient $1$, and thus its norm is not necessarily 1---its dual is denoted $Q_\lambda$.
Here, the Pieri rules involve $e_n$ and the \textit{transformed} complete symmetric functions $g_n$:
\begin{align}
\label{GPPieri}
g_nQ_\mu&= \sum_{\lambda}\prod_{ i<j\le\ell(\lambda)}\frac{\left( q^{\mu_i-\lambda_j+ 1}t^{j-i-1};q \right)_{\lambda_i-\mu_i}}{\left( q^{\mu_i-\lambda_j}t^{j-i};q \right)_{\lambda_i-\mu_i}}
\prod_{ i\le j\le\ell(\mu)}\frac{\left( q^{\mu_i-\mu_j}t^{j-i +1};q \right)_{\lambda_i-\mu_i}}{\left( q^{\mu_i-\mu_j+1}t^{j-i};q \right)_{\lambda_i-\mu_i}}Q_\lambda\\
e_nP_\mu&= \sum_{\lambda}\prod_{\substack{i<j\\ \lambda_i=\mu_i \\\lambda_j=\mu_j +1}}\frac{\left(1-q^{\mu_i-\mu_j}t^{j -i - 1}\right)\left( 1-q^{\lambda_i-\lambda_j}t^{j-i+1} \right)}{\left( 1-q^{\mu_i-\mu_j}t^{j - i} \right)\left( 1-q^{\lambda_i -\lambda_j}t^{j-i} \right)}P_\lambda
\label{EPPieri}
\end{align}
Here, the partitions appearing in (\ref{GPPieri}) are as in (\ref{HSPieri}) and those appearing in $(\ref{EPPieri})$ are as in (\ref{ESPieri}).
The factor $(x;q)_k$ appearing in (\ref{GPPieri}) denotes the $q$-Pochhammer symbol:
\[
(x;q)_k:=\prod_{i=0}^{k-1}\left( 1-q^ix \right)
\]

The basis $\left\{ g_\lambda \right\}$ built out of $\left\{ g_n \right\}$ is dual under $\langle-,-\rangle_{q,t}$ to $\left\{m_\lambda \right\}$, and thus (\ref{GPPieri}) yields the tableaux sum formula for $P_\lambda$ specialized to $n$ variables:
\[
P_\lambda(x_1,\ldots, x_n)=\sum_{T}\prod_{k=1}^n
\psi_{\lambda^{(k)}/\lambda^{(k-1)}}x^T
\]
where $\psi_{\lambda/\mu}$ is the coefficient from (\ref{GPPieri}).
On the other hand, Macdonald \cite{Mac} gives a clever proof of the $e_n$ Pieri rule (\ref{EPPieri}) that simultaneously establishes three other fundamental formulas for Macdonald polynomials:
\begin{itemize}
\item \textit{Principal evaluation}: For $\lambda$ with $\ell(\lambda)\le n$, 
\[
P_\lambda(t^{n-1},t^{n-2}, \cdots, 1)=t^{n(\lambda)}\prod_{\square\in\lambda}\frac{1-q^{a'(\square)}t^{n-\ell'(\square)}}{1-q^{a(\square)}t^{\ell(\square)+1}}
\]
Here, $n(\lambda)=\sum_{i}(i-1)\lambda_i$, and for $\square=(a,b)\in\lambda$ (cf. \ref{YoungMaya} for notation), $a'(\square)=a$, $\ell'(\square)=b$, $a(\square)=\lambda_b-a$, and $\ell(\square)=\tensor[^t]{\lambda}{}_a-b$, where $\tensor[^t]{\lambda}{}$ is the transposed partition.
\item \textit{Norm formula}: We have:
\begin{equation}
\langle P_\lambda,P_\lambda\rangle_{q,t}=\prod_{\square\in\lambda}\frac{1-q^{a(\square)+1}t^{\ell(\square)}}{1-q^{a(\square)}t^{\ell(\square)+1}}
\label{NormForm}
\end{equation}
\item \textit{Evaluation duality}: For $\lambda$ and $\mu$ with $\ell(\lambda),\ell(\mu)\le n$,
\[
\frac{P_\lambda(q^{\mu_1}t^{n-1}, q^{\mu_2}t^{n-2},\ldots, q^{\mu_n})}{P_\lambda(t^{n-1},t^{n-2},\ldots 1)}
=\frac{P_\mu(q^{\lambda_1}t^{n-1}, q^{\lambda_2}t^{n-2},\ldots, q^{\lambda_n})}{P_\mu(t^{n-1},t^{n-2},\ldots 1)}
\]
\end{itemize}
Thus, in this case, the Pieri rules play a crucial role in establishing these widely-used aspects of Macdonald polynomials.

\subsection{Quantum toroidal algebras}
With a view towards generalization, we recall the action of the rank 1 \textit{quantum toroidal algebra} $U_{q,t}(\ddot{\mathfrak{gl}}_1)$ on $\Lambda_{q,t}$.
This action is was constructed by Feigin \textit{et. al.} \cite{FHHSY} via \textit{vertex operators} that were already present in prior work of Jing \cite{JingMac} and Garsia-Haiman-Tesler \cite{GHT}.
Haiman's proof of Macdonald positivity (cf. \cite{Haiman}) showed that it was natural to assign Macdonald polynomials to fixed point classes in torus-equivariant K-theory of the Hilbert scheme of points on $\CC^2$, and this yielded a different perspective on the action of $U_{q,t}(\ddot{\mathfrak{gl}}_1)$ on Macdonald polynomials.
Feigin-Tsymbaliuk \cite{FeiTsymK} and Schiffmann-Vasserot \cite{SchiffVass} reconstructed this action using correspondences, and from this perspective, the generators of $U_{q,t}(\ddot{\mathfrak{gl}}_1)$ act by very fine creation and annihilation operators.

At first glance, these creation and annihilation operators seem to have little to do with symmetric functions---they are just operators on a vector space indexed by partitions.
However, Feigin-Tsymbaliuk were able to recover the Pieri rules through a very roundabout manner.
Through work of Negu\cb{t} \cite{NegutShuff}, $U_{q,t}(\ddot{\mathfrak{gl}}_1)$ is isomorphic to a \textit{shuffle algebra}, a space of functions endowed with an exotic product structure---by abuse of notation, we will denote the function variables using $\{x_i\}$.
Negu\cb{t}'s isomorphism provides a completely different way to think about $U_{q,t}(\ddot{\mathfrak{gl}}_1)$, and elements that are easy to express on one side of the isomorphism can be very complicated to write down in the other side.
From \cite{FHHSY}, one can expect that a scalar multiple of the shuffle element
\begin{equation}
K_n:=\prod_{1\le i<j\le n}\frac{\left( x_i-qx_j \right)\left( x_j-qx_i \right)}{(x_i-x_j)^2}
\label{KnShuff}
\end{equation}
acts as multlpication by $e_n$, and in \cite{FeiTsymK}, the authors show that the matrix elements of its action on Macdonald polynomials agree with the Pieri rule (\ref{EPPieri}).
Following the ideas outlined in Section \ref{Normal} below, one can also re-derive the norm formula (\ref{NormForm}).
Using a toroidal approach to the Cherednik-Macdonald-Mehta identity (cf. \cite{CNO}), we expect a toroidal derivation of principal evaluation and evaluation duality, but that is outside the scope of this paper.
Overall, we wish to convey that one can derive the basic elements of Macdonald theory using $U_{q,t}(\ddot{\mathfrak{gl}}_1)$, although this path lacks the elegance of Macdonald's simultaneous proof.

\subsection{Wreath Macdonald polynomials}
Defined by Haiman, the wreath Macdonald polynomials are a generalization of the Macdonald polynomials from the symmetric groups $\Sigma_n$ to their wreath products with a fixed cyclic group $\ZZ/\ell\ZZ$.
Using a wreath analogue of the Frobenius characteristic, we can view them as elements of $\Lambda_{q,t}^{\otimes\ell}$, whose bases are naturally indexed by $\ell$-tuples of partitions.
A basic component of their definition is the decomposition of an ordinary partition into its $\ell$-core and $\ell$-quotient, which we review in \ref{CoreQuot} below.
Roughly speaking, the $\ell$-quotient is an $\ell$-tuple of partitions that records the ribbons of length $\ell$ we can peel of an ordinary partition and the $\ell$-core is the partition that remains when all ribbons have been peeled off.
The wreath Macdonald polynomials are again indexed by ordinary partitions, and we will recycle notation and denote them by $\left\{ P_\lambda \right\}$.
Letting $\lambda$ range over partitions with the same $\ell$-core, we obtain a basis of $\Lambda_{q,t}^{\otimes\ell}$.
We have elementary symmetric functions $e_n(p)$ for each tensorand in $\Lambda_{q,t}^{\otimes\ell}$, and thus the $e_n$ Pieri rules come in $\ell$ \textit{colors}.

Wreath analogues of the standard elements of Macdonald theory are still largely unexplored, and we suspect part of the reason is that things are just more complicated in this setting.
A key component of Macdonald's simultaeneous proof that we have yet to mention are the \textit{Macdonald operators}, a family of difference operators diagonalized by the Macdonald polynomials.
Wreath analogues of Macdonald operators were discovered in our work with Daniel Orr and Mark Shimozono \cite{OSW}, and we invite the reader to compare the wreath Macdonald operators with their predecessors to get a sense of the difference in complexity.
Even if these operators were simple, the inductive argument used by Macdonald fails on the spot.
The main problem here is that one has little control over the partitions $\lambda$ that appear in the expression
\[
e_n(p)P_\mu=\sum_\lambda c_{\lambda,\mu}(q,t)P_\lambda,
\]
especially with respect to dominance order.

Something that does generalize is the quantum toroidal algebra.
In our previous work \cite{WreathEigen}, we laid the groundwork for studying wreath Macdonald polynomials using an action of the higher rank quantum toroidal algebra $\UTor$.
These higher rank algebras are also isomorphic to certain shuffle algebras \cite{NegutTor}, and we found shuffle elements $E_{p,n}$ generalizing $K_n$ (\ref{KnShuff}) whose action corresponds to multiplication by $e_n(p)$.
Finding wreath Pieri rules then amounts to computing matrix elements for $E_{p,n}$ with respect to the wreath Macdonald basis.
This paper lays the necessary preparation for such a calculation.
We also discuss a natural wreath generalization of the Macdonald pairing and give a pathway towards computing norm formulas.
An interesting feature here is that $P_\lambda$ is no longer collinear to its dual and the two are related by the square of the \textit{Miki automorphism}.
Despite all this preparation, we were unable to carry out these calculations---the formulas are too complicated and the combinatorics too chaotic.
Thus, this paper is the site of an abandoned expedition.

\subsection{Outline}
Section 1 goes over the necessary elements for defining and working with wreath Macdonald polynomials: wreath products, the core-quotient decomposition, etc.
We also introduce the wreath Macdonald pairing and dual wreath Macdonald polynomials.
Section 2 covers the quantum toroidal algebra and its realization as a shuffle algebra.
The key points are formulas for shuffle elements corresponding to $e_n(p)$ and the appropriate generalization of $g_n$ as well as a way to compute matrix elements of shuffle elements in the \textit{Fock representation}.
Finally, Section 3 isolates a key computation that yields the norm formula and is a prerequisite to computing wreath Pieri rules using $\UTor$.

\subsection{Acknowledgements}
I would like to thank Daniel Orr and Mark Shimozono for their collaboration---this paper would not see the light of day if not for their interest.
This work received support from NSF-RTG grant ``Algebraic Geometry and Representation Theory at Northeastern University'' (DMS-1645877).

\section{Wreath Macdonald polynomials}
Here, we introduce the basics of wreath Macdonald polynomials.
We recommend the excellent article of Orr and Shimozono \cite{OSWreath} for a deeper and broader introduction.

\subsection{Wreath products}
Let
\[\Gamma_n:=(\ZZ/\ell\ZZ)^n\rtimes\Sigma_n\]
denote the wreath product of $\ZZ/\ell\ZZ$ and the symmetric group $\Sigma_n$.
We denote by $(\ZZ/\ell\ZZ)_*$ and $(\ZZ/\ell\ZZ)^*$ the set of conjugacy classes and characters, respectively, of $\ZZ/\ell\ZZ$.
To distinguish between the two, we will write elements of $(\ZZ/\ell\ZZ)_*$ as powers of a generator $\epsilon$, whereas we will index $(\ZZ/\ell\ZZ)^*$ additively by simply using integers modulo $\ell$.
Specifically, for $i\in\ZZ/\ell\ZZ$, let $\gamma_i\in(\ZZ/\ell\ZZ)^*$ be the character
\[\gamma_i(\epsilon^j)=\zeta^{ij}\]
where $\zeta=e^{\frac{2\pi i}{\ell}}$.
Finally, let 
\[\Lambda\cong\CC[p_n]_{n=1}^\infty\]
denote the ring of symmetric functions.
We will apply the general framework of Chapter I, Appendix B of \cite{Mac} to study representations of $\Gamma_n$.
All results reviewed in this subsection appear in \textit{loc. cit.}.

\subsubsection{Symmetric functions}
Consider the ring
\[\Lambda^{\otimes\ell}\cong\CC[p_n(c)]_{c\in (\ZZ/\ell\ZZ)_*}\]
For a partition $\lambda$, we set
\[p_\lambda(\epsilon^i):=\prod_kp_{\lambda_k}(\epsilon^i)\]
We will also need the set of ring generators indexed instead by $(\ZZ/\ell\ZZ)^*$ and given by
\[p_n(i):=\sum_{j=0}^{\ell-1}\zeta^{ij}p_n(\epsilon^j)\]
For each $i\in\ZZ/\ell\ZZ$, we have an isomorphism $\Lambda\cong\CC[p_n(i)]$ given by $p_n\mapsto p_n(i)$.
We define $e_n(i)$ and $h_n(i)$ to be the images of $e_n$ and $h_n$, respectively, under this isomorphism.
Similarly, we define $p_\lambda(i)$, $e_\lambda(i)$, $h_\lambda(i)$, and $s_\lambda(i)$ as the images of $p_\lambda$, $e_\lambda$, $h_\lambda$, and $s_\lambda$, respectively.
We will follow the conventions of \cite{WreathEigen}, so we have
\begin{equation}
e_\lambda=\prod_{k=1}^{\ell(\lambda)} e_{\tensor[^t]{\lambda}{}_k}
\label{ElemConv}
\end{equation}
where $\ell(\lambda)$ is the length.
For an $\ell$-multipartition $\vec{\lambda}=(\lambda^0,\ldots,\lambda^{\ell-1})$, we define
\[p_{\vec{\lambda}}:=\prod_{i\in\ZZ/\ell\ZZ}p_{\lambda^i}(i)\]
We similarly define $e_{\vec{\lambda}}$, $h_{\vec{\lambda}}$, and $s_{\vec{\lambda}}$.
Finally, we set
\[p_{\vec{\lambda}}^c:=\prod_{i\in\ZZ/\ell\ZZ}p_{\lambda^i}(\epsilon^i)\]

\subsubsection{Wreath Hall pairing}
The \textit{Hall pairing} on $\Lambda^{\otimes\ell}$ is given by
\[\langle p_{\vec{\lambda}},p_{\vec{\mu}}\rangle=\delta_{\vec{\lambda},\vec{\mu}}z_{\vec{\lambda}}\]
where if we write $\lambda^i=(1^{m_1^i}2^{m_2^i}\ldots)$, then
\[z_{\vec{\lambda}}=\prod_{i\in\ZZ/\ell\ZZ}\prod_k k^{m_k^i}m_{k}^i!\]
Under this pairing, $\{s_{\vec{\lambda}}\}$ forms an orthonormal basis.

\begin{rem}
In Chapter I, Appendix B of \cite{Mac}, the Hall pairing is written in terms of the conjugacy class basis $\{p_{\vec{\lambda}}^c\}$.
However, the same functions $\{s_{\vec{\lambda}}\}$ still form an orthogonal basis under the pairing in \textit{loc. cit.}, from which it follows that we can define the Hall pairing as we do here.
\end{rem}

\subsubsection{Wreath Frobenius characteristic}\label{FrobChar}
Let
\[R_n:=\Rep(\Gamma_n)\]
In analogy with the study of the representation theory of symmetric groups, it is useful to consider the representations of all $\Gamma_n$:
\[R:=\bigoplus_n R_n\]
We can endow $R$ with the structure of a $\CC$-algebra under the \textit{induction product}: if $[V]\in\Gamma_n$ and $[W]\in\Gamma_m$,
\[[V]*[W]:=\left[\mathrm{Ind}_{\Gamma_n\times\Gamma_m}^{\Gamma_{m+n}}(V\boxtimes W)\right]\in R_{n+m}\]
Observe as well that each $R_n$ has an inner product given by the \textit{Hom pairing}:
\[
\langle [V],[W]\rangle=\dim\Hom_{\Gamma_n}(V,W)
\]
for actual (i.e. nonvirtual) representations $V,W$ of $\Gamma_n$.

The irreducible representations of $\Gamma_n$ are indexed by $\ell$-multipartitions $\vec{\lambda}$ with $|\vec{\lambda}|=n$.
For such $\vec{\lambda}$, we denote the corresponding irrep by $V_{\vec{\lambda}}$.
The classes $[V_{\vec{\lambda}}]$ form an orthonormal basis under the Hom pairing.
We have the following result:
\begin{prop}
The linear map $R\mapsto\Lambda^{\otimes\ell}$ induced by
\[[V_{\vec{\lambda}}]\mapsto s_{\vec{\lambda}}\]
is both an isometry and a ring isomorphism.
\end{prop}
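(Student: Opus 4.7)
The plan is to introduce the \emph{wreath Frobenius characteristic} map $\ch\colon R\to\Lambda^{\otimes\ell}$, defined on a virtual character $\chi_V$ of $\Gamma_n$ by
\[
\ch([V]):=\sum_{\vec\mu} \frac{\chi_V(g_{\vec\mu})}{|Z_{\Gamma_n}(g_{\vec\mu})|}\,p_{\vec\mu}^c,
\]
where $\vec\mu$ ranges over $(\ZZ/\ell\ZZ)_*$-indexed multipartitions of $n$ and $g_{\vec\mu}$ represents the conjugacy class labeled by $\vec\mu$. One then verifies three things: $\ch$ is an isometry, a ring homomorphism, and $\ch([V_{\vec\lambda}]) = s_{\vec\lambda}$.

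Isometry follows from the character orthogonality relations: the Hom pairing rewrites as $\sum_{\vec\mu}\chi_V(g_{\vec\mu})\overline{\chi_W(g_{\vec\mu})}/|Z_{\Gamma_n}(g_{\vec\mu})|$, which matches the Hall pairing of the images provided $\langle p_{\vec\mu}^c, p_{\vec\nu}^c\rangle$ is proportional to $\delta_{\vec\mu,\vec\nu}|Z_{\Gamma_n}(g_{\vec\mu})|$. This proportionality is exactly the content of the remark preceding the proposition: one transports the character-indexed pairing to the conjugacy-class basis via the Fourier transform $p_n(i) = \sum_j \zeta^{ij}p_n(\epsilon^j)$. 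For the ring homomorphism property, I would apply Frobenius's formula for induced characters to $\mathrm{Ind}_{\Gamma_n\times\Gamma_m}^{\Gamma_{n+m}}(V\boxtimes W)$ and use that a conjugacy class of $\Gamma_{n+m}$ of type $\vec\rho$ intersects $\Gamma_n\times\Gamma_m$ precisely in the pairs $(\vec\mu,\vec\nu)$ with $\vec\mu\sqcup\vec\nu=\vec\rho$; the centralizer orders then collapse exactly as in Macdonald's proof for $\Sigma_n$.

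To identify irreducibles with Schur functions, I would invoke the standard wreath-product construction: with $n_i = |\lambda^i|$, set
\[
V_{\vec\lambda} = \mathrm{Ind}_{\Gamma_{n_0}\times\cdots\times\Gamma_{n_{\ell-1}}}^{\Gamma_n}\left(\tilde V_{\lambda^0,0}\boxtimes\cdots\boxtimes\tilde V_{\lambda^{\ell-1},\ell-1}\right),
\]
where $\tilde V_{\lambda,i}$ is obtained by tensoring the $\Sigma_{|\lambda|}$-irreducible $V_\lambda$ (extended trivially to $\Gamma_{|\lambda|}$) with the one-dimensional character $\gamma_i^{\otimes|\lambda|}$ of $(\ZZ/\ell\ZZ)^{|\lambda|}$. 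Since $\ch$ is by then known to be a ring homomorphism, it suffices to prove the single-color statement $\ch([\tilde V_{\lambda,i}]) = s_\lambda(i)$, and this reduces to the classical Frobenius formula for $\Sigma_n$ once one observes that the $\gamma_i$-twist implements precisely the substitution $p_n\mapsto p_n(i)$ in the power-sum expansion of $s_\lambda$.

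The main technical obstacle is bookkeeping between the two indexings: $V_{\vec\lambda}$ and $s_{\vec\lambda}$ are labeled by characters in $(\ZZ/\ell\ZZ)^*$, while $\ch$ is naturally defined on the basis $\{p_{\vec\mu}^c\}$ labeled by conjugacy classes in $(\ZZ/\ell\ZZ)_*$. Carrying the Fourier transform $p_n(i) = \sum_j \zeta^{ij} p_n(\epsilon^j)$ through the induction and twisting computations so that the $i$-th color cleanly corresponds to the $i$-th tensor factor of $\Lambda^{\otimes\ell}$ is where the entire delicacy of the statement resides; aside from this, the argument is the standard Frobenius-characteristic yoga adapted to wreath products, essentially all of which can be extracted from \cite{Mac}, Chapter~I, Appendix~B.
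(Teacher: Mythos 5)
Your proposal is correct and follows exactly the route the paper takes: the paper gives no independent proof of this proposition and instead cites Chapter~I, Appendix~B of \cite{Mac}, which is precisely the argument you sketch. The wreath Frobenius characteristic, the orthogonality and Frobenius-reciprocity computations, and the construction of $V_{\vec\lambda}$ by twisting and inducing are all the standard content of that reference, and the bookkeeping between the $(\ZZ/\ell\ZZ)_*$- and $(\ZZ/\ell\ZZ)^*$-indexed bases that you flag is exactly the subtlety addressed by the remark preceding the proposition.
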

\noindent We call this map the \textit{(wreath) Frobenius characteristic}.

\subsubsection{Matrix plethysm\protect\footnote{We are indebted to D. Orr and M. Shimozono for this notion.}}
It will be useful to introduce plethystic notation for $\Lambda^{\otimes \ell}$.
However, we will have $\ell$ alphabets $\left\{ X^{(i)} \right\}$ indexed by $i\in\ZZ/\ell\ZZ$:
\[
\textstyle p_n(i)=p_n\left[X^{(i)}\right]
\]
The index $i$ will be called the \textit{color} of the alphabet.
We will use $X^{\bullet}$ to denote the alphabets over all colors and reserve $X^{(i)}$ for the specific color $i$.
For example,
\[
s_{\vec{\lambda}}=s_{\vec{\lambda}}[X^\bullet]
\]
but $s_{\vec{\lambda}}\left[ X^{(i)} \right]$ would only make sense if the only nonempty component of $\vec{\lambda}$ was its $i$th coordinate. 

Aside from the usual plethystic automorphisms of $\Lambda^{\otimes\ell}$ given by scaling each alphabet, we will also have automorphisms that mix colors in a linear fashion.
We will only consider such transformations built out of two basic operators.
First, we will use the \textit{upward color shift} $\sigma$:
\[
\sigma X^{(i)}=X^{(i+1)}
\]
for all $i\in\ZZ/\ell\ZZ$.
For an example of how we will use this operator, consider for an indeterminate $s$ the transformations
\[
\textstyle 
p_n\left[ X^{(i)} \right]\mapsto p_n\left[(1-s\sigma^{\pm 1})X^{(i)} \right]=p_n\left[ X^{(i)} \right]-s^np_n\left[ X^{(i\pm 1)} \right]
\]
One can directly compute that the inverse of the transformations above are given by
\[
{\textstyle p_n\left[X^{(i)}  \right]}\mapsto \frac{\sum_{j=0}^{\ell-1}s^{nj}p_n\left[X^{(i\pm j)}\right]}{1-s^{n\ell}}=:{\textstyle p_n\left[(1-s\sigma^{\pm 1})^{-1} X^{(i)} \right]}
\]
The second basic operator is the \textit{color negation} $\iota$:
\[
\iota X^{(i)}=X^{(-i)}
\]
This corresponds to taking the dual representation in $R$.
We close by noting that $\sigma$ and $\iota$ satisfy the following commutation relation:
\[
\iota\sigma=\sigma^{-1}\iota
\]
%
%Finally, we will be interested in the manifestations in $\Lambda^{\otimes\ell}$ of some natural operations in $R$.
%First, observe that the operation $(-)^*$ of taking the dual representation induces, under $\ch$, the involution on $\Lambda^{\otimes\ell}$ given by
%\[p_n(i)^*=p_n(-i)\]
%We have $s_\lambda(i)^*=s_\lambda(-i)$.
%Next, we will need the operation on $R$ that is given by $-\otimes(-1)^n\mathrm{sgn}_n$ on the summand $R_n$, where $\mathrm{sgn}_n$ is the sign representation of $\Gamma_n$ (the $\ZZ/\ell\ZZ$ factors act trivially).
%This induces the involution
%\[\iota p_n(i)=-p_n(i)\]
%We now have $\iota s_\lambda(i)=s_{\tensor[^t]{\lambda}{}}(i)$.
%Lastly, let $\mathfrak{h}_n$ be the natural reflection representation of $\Gamma_n$.
%The operations on $R$ that are given by 
%\[-\otimes(-1)^n\textstyle\bigwedge^n\mathfrak{h}_n\hbox{ and }-\otimes(-1)^n\textstyle\bigwedge^n\mathfrak{h}_n^*\]
%induce the automorphisms
%\[\iota^+p_n(i)=-p_n(i+1)\hbox{ and }\iota^-p_n(i)=-p_n(i-1)\]
%respectively.
%We then have $\iota^+s_\lambda(i)=s_{\tensor[^t]{\lambda}{}}(i+1)$ and $\iota^-s_\lambda(i)=s_{\tensor[^t]{\lambda}{}}(i-1)$.

\subsection{Cores and quotients}\label{CoreQuot}
We will now review the core-quotient decomposition of a partition.

\subsubsection{Young-Maya correspondence}\label{YoungMaya}
In this paper, we will utilize two visual representations of a partition: its \textit{Young diagram} and its \textit{Maya diagram}.
We will follow the French convention for Young diagrams.
For example, below is the Young diagram for $(5,4,1)$:

\vspace{.1in}

\centerline{\begin{tikzpicture}[scale=.5]
\draw (0,0)--(5,0)--(5,1)--(4,1)--(4,2)--(1,2)--(1,3)--(0,3)--(0,0);;
\draw (0,1)--(4,1);;
\draw (0,2)--(1,2);;
\draw (1,0)--(1,2);;
\draw (2,0)--(2,2);;
\draw (3,0)--(3,2);;
\draw (4,0)--(4,1);;
\end{tikzpicture}}

\vspace{.1in}

\noindent We call the boxes in the diagram \textit{nodes} and index them according to their upper right coordinate, wherein each square has dimensions $1\times 1$ and the bottom left corner of the partition has coordinate $(0,0)$.
Thus, the topmost node in $(5,4,1)$ has coordinate $(1,3)$.
For a node $\square=(i,j)$, we define its \textit{content} by $c(\square):=j-i$.
Thus, the content is constant along integer translates of the line $y=x$; we call these translates \textit{content lines}.
For $i\in\ZZ/\ell\ZZ$, a node $\square$ is called an $i$-node if $c(\square)\equiv i\hbox{ mod }\ell$.
Finally, for a positive integer $k$, we define a \textit{$k$-strip} in $\lambda$ to be a contiguous set of $k$ nodes on the outer rim of $\lambda$ whose removal leaves behind another partition.
We call its node with the greatest content (northwesternmost) its \textit{initial node} and its node with the least content (southeasternmost) its \textit{final node}.

A \textit{Maya diagram} is a function $m:\ZZ\rightarrow\left\{ \pm1 \right\}$ such that $m(n)=-1$ for all $n\gg0$ and $m(n)=1$ for all $n\ll0$.
We can visualize this as a string of beads indexed by the integers, wherein the bead assigned to $n$ is black if $m(n)=1$ and white if $m(n)=-1$.
In our conventions, we will always draw our string with index increasing towards the \textit{left}.
Moreover, we will draw a notch between the indices 0 and -1; we call this the \textit{central line}.
The Maya diagram where all beads right of the central line are black and all beads left of the central line are white is called the \textit{vacuum diagram}.
We give an example below:

\vspace{.1in}

\centerline{\begin{tikzpicture}[scale=.5]
\draw (-4, 0) node {$\cdots$};;
\draw (-3,0) node {3};;
\draw (-3,1) circle (5pt);;
\draw (-2,0) node {2};;
\draw[fill=black] (-2,1) circle (5pt);;
\draw (-1,0) node {1};;
\draw (-1,1) circle (5pt);;
\draw (0,0) node {0};;
\draw[fill=black] (0,1) circle (5pt);;
\draw (.5,.5)--(.5,1.5);;
\draw (1,0) node {-1};;
\draw[fill=black] (1,1) circle (5pt);;
\draw (2,0) node {-2};;
\draw[fill=black] (2,1) circle (5pt);;
\draw (3,0) node {-3};;
\draw (3,1) circle (5pt);;
\draw (4,0) node {-4};;
\draw[fill=black] (4,1) circle (5pt);;
\draw (5,0) node {-5};;
\draw (5,1) circle (5pt);;
\draw (6,0) node {-6};;
\draw[fill=black] (6,1) circle (5pt);;
\draw (7,0) node {$\cdots$};;
\end{tikzpicture}}

\vspace{.1in}

\noindent For a Maya diagram $m$, we would expect beads to the left of the central line to be white and beads to the right of the central line to be black.
The \textit{charge} $c(m)$ measures the discrepancy between the number of white beads on the right and the number of black beads on the left:
\[c(m)=|\left\{ n<0 :m(n)=-1 \right\}|-|\left\{ n\ge 0 : m(n)=1 \right\}|\]

The \textit{Young-Maya correspondence} assigns to a partition $\lambda$ a Maya diagram $m(\lambda)$.
It is given as follows.
First, tilt the Young diagram of $\lambda$ counter-clockwise by 45 degrees so that it is in the Russian convention.
Draw the content lines and index the gap left of the content $n$ line by $n$.
Within each of these gaps, there is a segment of the outer border of the Young diagram.
We set $m(\lambda)(n)=1$ if the segment in the gap for $n$ has slope 1 and $m(\lambda)(n)=-1$ if it has slope -1.
For the indices $n$ less than the least content of a node in $\lambda$ minus 1, we set $m(\lambda)(n)=1$, and likewise for indices greater than the greatest content of a node in $\lambda$, we set $m(\lambda)(n)=-1$.
In Figure \ref{fig:YoungMayaEx}, we give the example of $\lambda=(5,4,1)$.

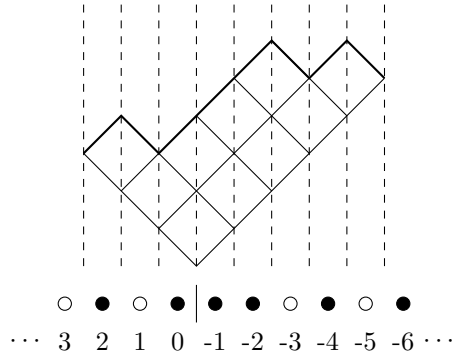
\begin{figure}[h]
\centering
\begin{tikzpicture}[scale=.5]
\draw[thick] (-2.5,5)--(-1.5,6)--(-0.5,5)--(2.5,8)--(3.5,7)--(4.5,8)--(5.5,7);;
\draw (-1.5,4)--(-.5,5);;
\draw (-.5,3)--(3.5,7);;
\draw (-2.5,5)--(.5,2)--(5.5,7);;
\draw (-.5,5)--(1.5,3);;
\draw (.5,6)--(2.5,4);;
\draw (1.5,7)--(3.5,5);;
\draw (2.5,8)--(4.5,6);;
\draw (-4, 0) node {$\cdots$};;
\draw (-3,0) node {3};;
\draw (-3,1) circle (5pt);;
\draw (-2,0) node {2};;
\draw[fill=black] (-2,1) circle (5pt);;
\draw (-1,0) node {1};;
\draw (-1,1) circle (5pt);;
\draw (0,0) node {0};;
\draw[fill=black] (0,1) circle (5pt);;
\draw (.5,.5)--(.5,1.5);;
\draw (1,0) node {-1};;
\draw[fill=black] (1,1) circle (5pt);;
\draw (2,0) node {-2};;
\draw[fill=black] (2,1) circle (5pt);;
\draw (3,0) node {-3};;
\draw (3,1) circle (5pt);;
\draw (4,0) node {-4};;
\draw[fill=black] (4,1) circle (5pt);;
\draw (5,0) node {-5};;
\draw (5,1) circle (5pt);;
\draw (6,0) node {-6};;
\draw[fill=black] (6,1) circle (5pt);;
\draw (7,0) node {$\cdots$};;
\draw[dashed] (0.5,2)--(0.5,9);;
\draw[dashed] (1.5,2)--(1.5,9);;
\draw[dashed] (2.5,2)--(2.5,9);;
\draw[dashed] (3.5,2)--(3.5,9);;
\draw[dashed] (4.5,2)--(4.5,9);;
\draw[dashed] (-0.5,2)--(-0.5,9);;
\draw[dashed] (-1.5,2)--(-1.5,9);;
\draw[dashed] (-2.5,2)--(-2.5,9);;
\draw[dashed] (5.5,2)--(5.5,9);;
\end{tikzpicture}
\caption{The Young-Maya correspondence for $(5,4,1)$.}
\label{fig:YoungMayaEx}
\end{figure}

\begin{prop}
The Young-Maya correspondence gives a bijection between partitions and Maya diagrams with charge zero.
\end{prop}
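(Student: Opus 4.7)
The plan is to prove the proposition by constructing an explicit inverse to the Young--Maya map and checking that both compositions are the identity. First I would verify that for any partition $\lambda$, the function $m(\lambda)$ really is a Maya diagram: since $\lambda$ has only finitely many nodes, the outer border of its Russian-convention Young diagram deviates from its asymptotic slopes only within a finite window of contents, so $m(\lambda)(n)$ agrees with the vacuum pattern for $|n| \gg 0$.

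The substantive step in the forward direction is the charge-zero calculation. Tracing the outer border of the rotated Young diagram from far left to far right, each slope-$+1$ segment in the gap at index $n$ records $m(\lambda)(n)=1$ and each slope-$-1$ segment records $m(\lambda)(n)=-1$. Because the border begins and ends on the $x$-axis, the total vertical displacement along it is zero, so the numbers of slope-$+1$ and slope-$-1$ segments within the finite ``flipped'' window must balance against the asymptotic vacuum pattern. Translating this balance into bead counts: the number of positions $n\ge 0$ where $m(\lambda)(n)=1$ (i.e.\ deviates from vacuum on the left half of the string) equals the number of positions $n<0$ where $m(\lambda)(n)=-1$ (deviates on the right half). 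This is exactly $c(m(\lambda))=0$. One can cross-check on the example $(5,4,1)$: the two deviating black beads at $n=0,2$ match the two deviating white beads at $n=-3,-5$.

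For the inverse, given a charge-zero Maya diagram $m$, I would reconstruct a partition by drawing a lattice path indexed by $\ZZ$, placing a slope-$+1$ segment in the gap at index $n$ when $m(n)=1$ and a slope-$-1$ segment when $m(n)=-1$. The Maya condition on $m$ ensures the path is eventually linear on both sides (matching the vacuum zigzag), and the charge-zero hypothesis ensures the two asymptotic rays lie on a common horizontal line; normalizing this line to be the $x$-axis and rotating $45^\circ$ clockwise yields a unique French-convention Young diagram $\lambda_m$. By construction, the gap at content $n$ of $\lambda_m$ houses a segment whose slope is dictated by $m(n)$, so $m(\lambda_m)=m$; conversely, starting from $\lambda$ and reading off $m(\lambda)$ then tracing the reconstructed path reproduces the original border of $\lambda$. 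The two assignments are therefore mutually inverse.

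The main obstacle, as far as I can tell, is not conceptual but bookkeeping: one must carefully reconcile the convention that string indices increase to the left, the sign convention $c(\square)=j-i$ for content, the Russian versus French drawing conventions, and the precise definition of charge so that the geometric pairing of ``flipped'' border segments matches the definition of $c(m)$. Once the dictionary is pinned down, both directions of the bijection follow by direct inspection.
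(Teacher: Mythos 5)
Your proposal is correct, and since the paper states this proposition without proof (it is a standard combinatorial fact, due in this form to the theory of Maya/wedge diagrams and the boson--fermion correspondence), there is no paper argument to compare against. Your two-step plan --- (i) show $m(\lambda)$ is an eventually-constant function with charge zero by a conservation-of-height argument on the rotated boundary path, and (ii) invert by reassembling a lattice path from a charge-zero Maya diagram --- is exactly the usual route, and the cross-check on $(5,4,1)$ (two black deviations at $n=0,2$ against two white deviations at $n=-3,-5$) is computed correctly against Figure~\ref{fig:YoungMayaEx}.

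One small imprecision worth tightening when you write this up: the phrase ``the two asymptotic rays lie on a common horizontal line'' is not quite what you want, since the rays have slopes $\pm 1$. The correct statement is that the charge measures the \emph{vertical offset} between the two asymptotic rays (equivalently, the horizontal displacement of the vertex where their extensions would meet, relative to the central line). Charge zero is precisely the condition that this vertex sits on the central line, so that the path bounds a region congruent to a partition in standard position. Once you phrase it that way, both the forward charge-zero computation and the well-definedness of the inverse follow from the same height-bookkeeping, and the rest is, as you say, reconciling the convention that the string index increases leftward with the sign $c(\square)=j-i$.
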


\begin{rem}\label{MayaRem}
Observe that adding a node to $\lambda$ corresponds to the following switch on adjacent beads in $m(\lambda)$:

\vspace{.1in}

\centerline{\begin{tikzpicture}[scale=.5]
\draw (0,1) circle (5pt);;
\draw[fill=black] (1,1) circle (5pt);;
\draw (3,1) node {$\rightarrow$};;
\draw (6,1) circle (5pt);;
\draw[fill=black] (5,1) circle (5pt);;
\end{tikzpicture}}

\vspace{.1in}

\noindent On the other hand, removing a node corresponds to the opposite switch:
\vspace{.1in}

\centerline{\begin{tikzpicture}[scale=.5]
\draw[fill=black]  (0,1) circle (5pt);;
\draw (1,1) circle (5pt);;
\draw (3,1) node {$\rightarrow$};;
\draw[fill=black] (6,1) circle (5pt);;
\draw (5,1) circle (5pt);;
\end{tikzpicture}}

\vspace{.1in} 

\noindent Notice now that doing the same swaps above in $m(\lambda)$ to beads indexed at $n$ and $n+k$ instead adds/removes $k$-strips to/from $\lambda$.
\end{rem}

\subsubsection{Core-quotient decomposition}\label{CoreQuotDec}
For a partition $\lambda$ and its Maya diagram $m(\lambda)$, consider its \textit{quotient Maya diagrams}
\[m_i(\lambda)(n)=m(\lambda)(i+n\ell)\]
for $0\le i\le\ell-1$.
In general, $m_i(\lambda)$ will have a charge $c_i$ that may not be zero.
If we draw a notch immediately right of the bead indexed by $-c_i$ in $m_i(\lambda)$, then we can perform the Young-Maya correspondence where the content zero line of the Young diagram is lined up with this notch instead.
This yields a partition $\lambda^i$.
We call
\[\quot(\lambda):=(\lambda^0,\ldots,\lambda^{\ell-1})\]
the \textit{$\ell$-quotient}.
The example of $\ell=3$ and $\lambda=(5,4,1)$ is show in Figure \ref{fig:quotientex}.
Applying Remark \ref{MayaRem} to $m_i(\lambda)$ and then to $m(\lambda)$, we can see that each node of $m_i(\lambda)$ corresponds to an $\ell$-strip in $m(\lambda)$ whose initial node is an $i$-node. 
Thus, $\quot(\lambda)$ records the $\ell$-strips of $\lambda$.

\begin{figure}[h]
\centering
\begin{tikzpicture}[scale=.5]
\draw (-8,.5) node {$m(\lambda)$:};;
\draw (-6, 0) node {$\cdots$};;
\draw (-5,0) node {3};;
\draw (-5,1) circle (5pt);;
\draw (-4,0) node {2};;
\draw[fill=black] (-4,1) circle (5pt);;
\draw (-3,0) node {1};;
\draw (-3,1) circle (5pt);;
\draw (-2,0) node {0};;
\draw[fill=black] (-2,1) circle (5pt);;
\draw (-1.5,.5)--(-1.5,1.5);;
\draw (-1,0) node {-1};;
\draw[fill=black] (-1,1) circle (5pt);;
\draw (0,0) node {-2};;
\draw[fill=black] (0,1) circle (5pt);;
\draw (1,0) node {-3};;
\draw (1,1) circle (5pt);;
\draw (2,0) node {-4};;
\draw[fill=black] (2,1) circle (5pt);;
\draw (3,0) node {-5};;
\draw (3,1) circle (5pt);;
\draw (4,0) node {-6};;
\draw[fill=black] (4,1) circle (5pt);;
\draw (5,0) node {$\cdots$};;
\draw (-15,-5.5) node {$m_0(\lambda):$};;
\draw (-13,-6) node {$\cdots$};;
\draw (-12,-6) node {3};;
\draw (-12,-5) circle (5pt);;
\draw (-11,-6) node {0};;
\draw[fill=black] (-11,-5) circle (5pt);;
\draw (-10,-6) node {-3};;
\draw (-10,-5) circle (5pt);;
\draw (-10.5, -4)--(-9.5,-3)--(-10.5,-2)--(-11.5,-3)--(-10.5,-4);;
\draw (-9,-6) node {-6};;
\draw[fill=black] (-9,-5) circle (5pt);;
\draw (-8,-6) node {$\cdots$};;
\draw (-10.5,-5.5)--(-10.5,-4.5);;
\draw (-5,-5.5) node {$m_1(\lambda):$};;
\draw (-3,-6) node {$\cdots$};;
\draw (-2,-6) node {1};;
\draw (-1.5,-5.5)--(-1.5,-4.5);;
\draw (-2,-5) circle (5pt);;
\draw (-1,-6) node {-2};;
\draw[fill=black] (-1,-5) circle (5pt);;
\draw[dashed] (-.5,-5.5)--(-.5,-4.5);;
\draw (-.5, -4)--(.5,-3)--(-.5,-2)--(-1.5,-3)--(-.5,-4);;
\draw (0,-6) node {-5};;
\draw (0,-5) circle (5pt);;
\draw (1,-6) node {-8};;
\draw[fill=black] (1,-5) circle (5pt);;
\draw (2,-6) node {$\cdots$};;
\draw (5, -5.5) node {$m_2(\lambda):$};;
\draw (7,-6) node {$\cdots$};;
\draw (8,-6) node {5};;
\draw (8,-5) circle (5pt);;
\draw[dashed] (8.5,-5.5)--(8.5,-4.5);;
\draw (8.5,-3) node {$\varnothing$};;
\draw (9,-6) node {2};;
\draw[fill=black] (9,-5) circle (5pt);;
\draw (9.5,-5.5)--(9.5,-4.5);;
\draw (10,-6) node {$\cdots$};;
\end{tikzpicture}
\caption{Computing the 3-quotient of $(5,4,1)$. We index the beads in $m_i(\lambda)$ with their original indices in $m(\lambda)$. The solid line is the central line, whereas the dashed line is the notch right of $-c_i$.}
\label{fig:quotientex}
\end{figure}
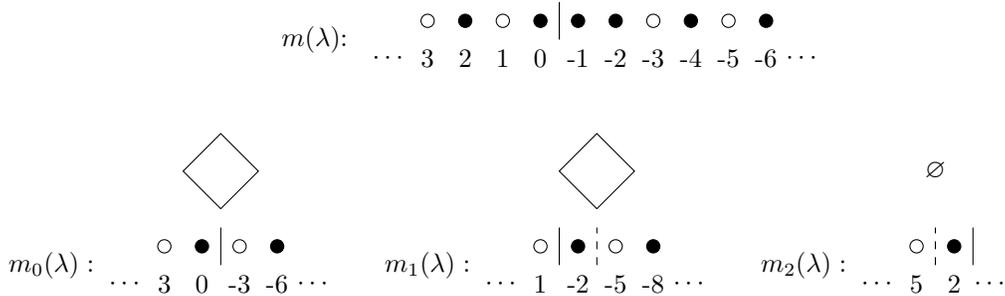

Once all the $\ell$-strips are removed from $\lambda$, we are left with its \textit{$\ell$-core} $\core(\lambda)$.
An \textit{$\ell$-core partition} is one that has no $\ell$-strips. 
To obtain the $\ell$-core from $m(\lambda)$, we change each $m_i(\lambda)$ into the diagram where all beads $<-c_i$ are black and all beads $\ge -c_i$ are white.
This is like the vacuum diagram except that the central line is replaced by our notch right of $-c_i$. 
Performing corresponding changes in the total Maya diagram $m(\lambda)$, we obtain $\core(\lambda)$.
This is illustrated for $\ell=3$ and $\lambda=(5,4,1)$ in Figure \ref{fig:coreex}.
From this, it follows that $\core(\lambda)$ is determined by the charges $c_i$.
Since $m(\lambda)$ has charge zero, it follows that
\[c_0+\cdots+c_{\ell-1}=0\]
Thus, we can view $(c_0,\ldots, c_{\ell-1})$ as a vector in the $A_{\ell-1}$ root lattice $Q$, and in fact all root lattice vectors can be realized in this way.
We will abuse notation and also denote this vector as $\core(\lambda)$.
This assignment $\lambda\mapsto (\core(\lambda), \quot(\lambda))$ is called the \textit{core-quotient decomposition} of $\lambda$.
We have the following:

\begin{prop}[\cite{JamesComb}]
The core-quotient decomposition yields a bijection
\[\{\hbox{\rm partitions}\}\leftrightarrow\{\hbox{\rm$\ell$-core partitions}\}\times\left\{ \hbox{\rm$\ell$-multipartitions} \right\}\]
\end{prop}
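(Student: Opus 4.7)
The plan is to construct an explicit inverse to the core-quotient decomposition using the Young-Maya correspondence, which has already been established as a bijection between partitions and charge-zero Maya diagrams. Given an $\ell$-core partition $\mu$ (identified with its charge vector $(c_0,\ldots,c_{\ell-1})\in Q$) and an $\ell$-multipartition $\vec{\lambda}=(\lambda^0,\ldots,\lambda^{\ell-1})$, I would first produce $\ell$ charged Maya diagrams $m_i$ by applying the Young-Maya correspondence to $\lambda^i$ with the central notch shifted to lie immediately right of position $-c_i$. I would then interleave them into a single Maya diagram $m$ by setting $m(i+n\ell):=m_i(n)$. Because $\sum_i c_i = 0$, the total charge of $m$ is zero, so $m$ corresponds via the Young-Maya bijection to a well-defined partition $\lambda$.

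The next step is to verify that this construction is inverse to the core-quotient decomposition as defined in \ref{CoreQuotDec}. By the ``de-interleaving'' definition $m_i(\lambda)(n)=m(\lambda)(i+n\ell)$, the map $\lambda\mapsto(m_0(\lambda),\ldots,m_{\ell-1}(\lambda))$ is tautologically inverse to the interleaving above on the level of sequences of charged Maya diagrams. The fact that removing an $\ell$-strip with initial $i$-node corresponds, via Remark \ref{MayaRem}, to an adjacent bead swap within $m_i(\lambda)$ alone confirms that iteratively stripping $\ell$-ribbons from $\lambda$ progressively ``normalizes'' each $m_i$ until it reaches the straightened form (all black below the shifted notch, all white above). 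This is precisely the diagram defining $\core(\lambda)$, so the core depends only on the charges $(c_0,\ldots,c_{\ell-1})$, and what remains of each $m_i$ after normalization is exactly the original $\lambda^i$.

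Finally, I would show that the correspondence between $\ell$-core partitions and charge vectors in $Q$ is itself a bijection. Surjectivity follows because for any $(c_0,\ldots,c_{\ell-1})$ with $\sum c_i=0$, interleaving the straightened charged Maya diagrams yields a valid charge-zero Maya diagram, whose associated partition is an $\ell$-core (since each $m_i$ admits no interior swap, i.e.\ no $\ell$-strip can be removed). Injectivity follows because an $\ell$-core has no $\ell$-strips, forcing each $m_i$ into its unique straightened form, which is determined by $c_i$ alone. Composing the bijections
\[
\{\text{partitions}\}\leftrightarrow\{\text{charge-0 Maya diagrams}\}\leftrightarrow\Bigl\{(m_0,\ldots,m_{\ell-1}):\sum c(m_i)=0\Bigr\}\leftrightarrow Q\times\{\text{$\ell$-multipartitions}\}\leftrightarrow\{\text{$\ell$-cores}\}\times\{\text{$\ell$-multipartitions}\}
\]
gives the desired bijection.

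The argument is essentially bookkeeping once the Young-Maya correspondence and Remark \ref{MayaRem} are in hand; the main point requiring care is tracking the charge-shifted notches consistently so that the normalization step producing $\core(\lambda)$ truly recovers the $\ell$-core from the charge vector alone. I do not expect a serious obstacle beyond this.
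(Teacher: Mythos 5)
Your proposal is correct and follows the standard abacus/Maya-diagram argument (interleaving and de-interleaving the $\ell$ quotient diagrams, with the charge condition $\sum c_i = 0$ ensuring the assembled diagram has charge zero). The paper does not prove this proposition itself—it cites James—but your argument is exactly the one implicit in the Maya-diagram setup the paper develops in \ref{CoreQuotDec}, so this is essentially the intended approach.
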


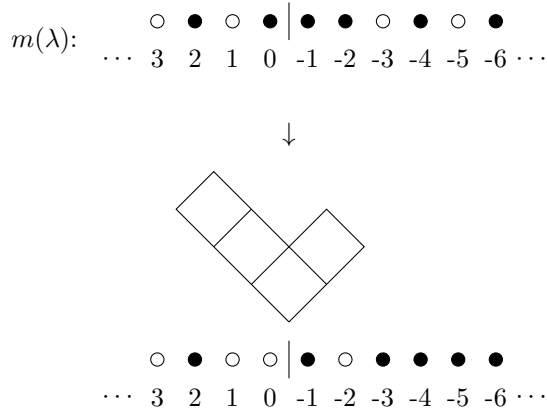
\begin{figure}[h]
\centering
\begin{tikzpicture}[scale=.5]
\draw (-6,.5) node {$m(\lambda)$:};;
\draw (-4, 0) node {$\cdots$};;
\draw (-3,0) node {3};;
\draw (-3,1) circle (5pt);;
\draw (-2,0) node {2};;
\draw[fill=black] (-2,1) circle (5pt);;
\draw (-1,0) node {1};;
\draw (-1,1) circle (5pt);;
\draw (0,0) node {0};;
\draw[fill=black] (0,1) circle (5pt);;
\draw (.5,.5)--(.5,1.5);;
\draw (1,0) node {-1};;
\draw[fill=black] (1,1) circle (5pt);;
\draw (2,0) node {-2};;
\draw[fill=black] (2,1) circle (5pt);;
\draw (3,0) node {-3};;
\draw (3,1) circle (5pt);;
\draw (4,0) node {-4};;
\draw[fill=black] (4,1) circle (5pt);;
\draw (5,0) node {-5};;
\draw (5,1) circle (5pt);;
\draw (6,0) node {-6};;
\draw[fill=black] (6,1) circle (5pt);;
\draw (7,0) node {$\cdots$};;
\draw (0.5,-2) node {$\downarrow$};;
\draw (-4, -9) node {$\cdots$};;
\draw (-3,-9) node {3};;
\draw (-3,-8) circle (5pt);;
\draw (-2,-9) node {2};;
\draw[fill=black] (-2,-8) circle (5pt);;
\draw (-1,-9) node {1};;
\draw (-1,-8) circle (5pt);;
\draw (0,-9) node {0};;
\draw (0,-8) circle (5pt);;
\draw (.5,-8.5)--(.5,-7.5);;
\draw (1,-9) node {-1};;
\draw[fill=black] (1,-8) circle (5pt);;
\draw (2,-9) node {-2};;
\draw (2,-8) circle (5pt);;
\draw (3,-9) node {-3};;
\draw[fill=black] (3,-8) circle (5pt);;
\draw (4,-9) node {-4};;
\draw[fill=black] (4,-8) circle (5pt);;
\draw (5,-9) node {-5};;
\draw[fill=black] (5,-8) circle (5pt);;
\draw (6,-9) node {-6};;
\draw[fill=black] (6,-8) circle (5pt);;
\draw (7,-9) node {$\cdots$};;
\draw (.5,-7)--(2.5,-5)--(1.5,-4)--(.5,-5)--(-1.5,-3)--(-2.5,-4)--(.5,-7);;
\draw (1.5,-6)--(.5,-5);;
\draw (-.5,-6)--(.5,-5);;
\draw (-1.5,-5)--(-.5,-4);;
\end{tikzpicture}
\caption{The 3-core of $\lambda=(5,4,1)$.}
\label{fig:coreex}
\end{figure}

\subsubsection{Orders on columns and rows}\label{Orders}
To conclude this subsection, we will review an ordering on the columns and rows of $\quot(\lambda)$ introduced in \cite{WreathEigen}.
First observe that every Maya diagram has a maximal infinite sequence of consecutive black beads and a maximal infinite sequence of consecutive white beads; we call them the \textit{black sea} and \textit{white sea}, respectively. 
By Remark \ref{MayaRem}, adding a column of length $k$ to $\mu^i$ that is no longer than any of the existing columns of $\mu^i$ entails the following swap in $m_i(\mu)$.

\vspace{.1in}

\centerline{\begin{tikzpicture}[scale=.5]
\draw (0,1) circle (5pt);;
\draw (0,0) node {$j+k$};;
\draw (1,1) circle (5pt);;
\draw (2,1) node {$\cdots$};;
\draw (3,1) circle (5pt);;
\draw (4,0) node {$j$};;
\draw[fill=black] (4,1) circle (5pt);;
\draw (6,1) node {$\rightarrow$};;
\draw (8,0) node {$j+k$};;
\draw[fill=black] (8,1) circle (5pt);;
\draw (9,1) circle (5pt);;
\draw (10,1) node {$\cdots$};;
\draw (11,1) circle (5pt);;
\draw (12,0) node {$j$};;
\draw (12,1) circle (5pt);;
\end{tikzpicture}}

\vspace{.1in}

\noindent Here, all the beads in the middle are white and the black bead is the leftmost bead in the black sea of $m_i(\mu)$.
Putting this in the total Maya diagram, one can see that we have added a $k\ell$-strip to $\mu$ wherein the consecutive $i$ and $(i+1)$ nodes are vertically adjacent.
Similarly, adding a row of length $k$ to $\mu^i$ that is no longer than any of the previous nodes entails swapping the rightmost bead in the white sea of $m_i(\mu)$ with a black bead $k$ places to the right, and this is equivalent to adding a $k\ell$-strip to $\mu$ wherein consecutive $i$ and $(i+1)$-nodes are horizontally adjacent.

For a partition $\lambda$, we define the \textit{left-to-right order} on the columns of $\quot(\lambda)$ to be the unique order such that if we perform the corresponding strip additions to $\core(\lambda)$, the initial nodes for earlier columns are northwest of the initial nodes for later columns.
The existence of this order comes from our observation above: to each column of $\lambda^i$, we can assign a black bead that came from the black sea, and we are merely stipulating that, viewing these beads in $m(\lambda)$, those of earlier columns are left of those of later columns.
In the example of of the 3-quotient of $\lambda=(5,4,1)$ depicted in Figure \ref{fig:quotientex}, we have that the box in $\lambda^0$ comes before the box in $\lambda^1$.
Similarly, the \textit{right-to-left order} on the rows of $\quot(\lambda)$ is the unique one in which performing the strip additions to $\core(\lambda)$ in that order results in the final nodes of earlier rows lying southeast of those of later rows. 
We note that while we call these orderings on the columns and rows of $\quot(\lambda)$, they are defined in terms of the original partition $\lambda$.

\subsection{Wreath Macdonald polynomials}
With the combinatorics of $\ell$-cores and $\ell$-quotients reviewed, we are ready to define the transformed and ordinary wreath Macdonald polynomials.

\subsubsection{Plethysm}
Now, for two indeterminates $q$ and $t$, let 
\begin{align*}
\Lambda_{q,t}&:=\CC(q,t)\otimes\Lambda\\
R_{q,t}&:=\CC(q,t)\otimes R
\end{align*}
The group $\Gamma_n$ has a natural reflection representation $\mathfrak{h}_n$.
For an indeterminate $s$, we define the operators $\bigwedge_s^+$ and $\bigwedge_s^-$ on $\CC(s)\otimes R$ such that
\begin{align*}
\left.\textstyle\bigwedge_s^+\right|_{R_n}&=(-)\otimes\sum_{i=0}^n(-s)^i\left[\textstyle\bigwedge^i\mathfrak{h}_n\right]\\
\left.\textstyle\bigwedge_s^-\right|_{R_n}&=(-)\otimes\sum_{i=0}^n(-s)^i\left[\textstyle\bigwedge^i\mathfrak{h}_n^*\right]
\end{align*}
%As noted in \ref{FrobChar}, we will abuse notation and directly apply these operators to $\CC(s)\otimes\Lambda^{\otimes\ell}$.
The following was proved in \cite{WreathEigen}:

\begin{prop}
$\bigwedge_s^\pm$ is the ring automorphisms on $\CC(s)\otimes\Lambda^{\otimes\ell}$ given by
\begin{equation}
\textstyle\bigwedge_s^\pm f\left[ X^{\bullet} \right]=f\left[ (1-s\sigma^{\pm 1})X^{\bullet} \right]\label{pleths}
\end{equation}
\end{prop}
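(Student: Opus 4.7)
The plan is in two stages: first, show that $\bigwedge_s^\pm$ is a ring endomorphism of $\CC(s) \otimes R$ with respect to the induction product, so that after transport along the Frobenius characteristic it is a ring endomorphism of $\CC(s) \otimes \Lambda^{\otimes \ell}$; second, verify (\ref{pleths}) on the power sum generators $\{p_n(i)\}$, whence $\bigwedge_s^\pm$ is in particular an automorphism. Write $A_n^\pm := \sum_{i=0}^n (-s)^i [\bigwedge^i \mathfrak{h}_n^{(*)}] \in \CC(s) \otimes R_n$, where $\mathfrak{h}_n^{(*)}$ is $\mathfrak{h}_n$ in the $+$ case and $\mathfrak{h}_n^*$ in the $-$ case, so that $\bigwedge_s^\pm$ acts on $R_n$ by internal tensor product with $A_n^\pm$.

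Stage one hinges on the restriction formula
\[
\mathfrak{h}_{n+m}\big|_{\Gamma_n \times \Gamma_m} \cong (\mathfrak{h}_n \boxtimes \mathbb{1}_{\Gamma_m}) \oplus (\mathbb{1}_{\Gamma_n} \boxtimes \mathfrak{h}_m),
\]
which is immediate from the permutation description of $\mathfrak{h}_{n+m}$ on the basis $e_1, \ldots, e_{n+m}$ together with the block structure of the embedding $\Gamma_n \times \Gamma_m \hookrightarrow \Gamma_{n+m}$. Taking exterior powers and signed sums yields $\mathrm{Res}^{\Gamma_{n+m}}_{\Gamma_n \times \Gamma_m}(A_{n+m}^\pm) = A_n^\pm \boxtimes A_m^\pm$, and then the projection formula for the interaction of induction with internal tensor product gives
\[
\bigwedge_s^\pm([V] * [W]) = \mathrm{Ind}\bigl((V \boxtimes W) \otimes (A_n^\pm \boxtimes A_m^\pm)\bigr) = \bigwedge_s^\pm [V] * \bigwedge_s^\pm [W].
\]

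For stage two, since both sides of (\ref{pleths}) are now ring homomorphisms, it suffices to check agreement on the $p_n(i)$. Under Frobenius, $p_n(\epsilon^j) \in R_n$ is, up to the standard $z$-normalization, the class function supported on those $w \in \Gamma_n$ whose $\Sigma_n$-component is a single $n$-cycle with cycle product in the class $\epsilon^j$. On such a $w$, the eigenvalues of $w$ on $\mathfrak{h}_n$ are the $n$ distinct $n$-th roots of $\gamma_1(\epsilon^j) = \zeta^j$, so the classical identity gives
\[
\chi_{A_n^\pm}(w) = \det\!\bigl(1 - s \cdot w|_{\mathfrak{h}_n^{(*)}}\bigr) = 1 - s^n \zeta^{\pm j}.
\]
Multiplying class functions pointwise, $\chi_{\bigwedge_s^\pm p_n(i)}(w) = \zeta^{ij}(1 - s^n \zeta^{\pm j}) = \zeta^{ij} - s^n \zeta^{(i\pm 1)j}$, which is exactly the value at $w$ of $p_n(i) - s^n p_n(i \pm 1) = p_n[(1 - s\sigma^{\pm 1})X^{(i)}]$. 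Both sides vanish at elements whose permutation component is not a single $n$-cycle, so the class functions agree.

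The main bookkeeping nuisance is reconciling the character-basis generators $\{p_n(i)\}$ with the conjugacy-class-basis generators $\{p_n(\epsilon^j)\}$ of \ref{FrobChar}, and fixing the precise wreath-Frobenius normalization used in \cite{WreathEigen} against Macdonald's Appendix I.B conventions; once these are aligned, the entire argument reduces to the classical reflection-group identity that $\sum_i (-s)^i [\bigwedge^i V]$ has character $\det(1 - s \cdot g|_V)$ applied to the permutation-type eigenvalues of a wreath-product element, together with the Frobenius reciprocity argument of stage one.
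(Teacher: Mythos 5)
The paper cites \cite{WreathEigen} for this proposition and does not reproduce the proof, so there is no in-text argument to compare against. Your proposed proof is correct, and it is the natural argument: establish the ring-homomorphism property via the restriction $\mathfrak{h}_{n+m}|_{\Gamma_n\times\Gamma_m}\cong(\mathfrak{h}_n\boxtimes\mathbb{1})\oplus(\mathbb{1}\boxtimes\mathfrak{h}_m)$, the K\"unneth decomposition of exterior powers, and the projection formula; then verify (\ref{pleths}) on the power-sum generators by computing eigenvalues. The eigenvalue computation checks out: for $w\in\Gamma_n$ whose underlying permutation is a single $n$-cycle with cycle product $\epsilon^j$, the matrix of $w$ on the $n$-dimensional reflection representation $\mathfrak{h}_n$ is a generalized permutation matrix with characteristic polynomial $x^n-\zeta^j$, so $\det(1-sw|_{\mathfrak{h}_n})=1-s^n\zeta^j$ and $\det(1-sw|_{\mathfrak{h}_n^*})=1-s^n\zeta^{-j}$, giving exactly $p_n(i)\mapsto p_n(i)-s^np_n(i\pm 1)$ after multiplying class functions. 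The $z$-normalization you flag is harmless since the centralizer order of an $n$-cycle-type element is $n\ell$ independent of $j$, so the same scalar appears on both sides of the identity. The only thing worth making explicit if you write this up in full is that the internal tensor product with $A_n^\pm$ corresponds on class functions to pointwise multiplication by $\chi_{A_n^\pm}$, which is the bridge between stage one (a ring map on $R$) and stage two (a computation at the level of class functions); otherwise the argument is complete.
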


\subsubsection{Transformed polynomials}\label{TransPol}
For $\lambda,\mu\vdash n$, we let $\lambda \ge_\ell\mu$ denote that $\lambda\ge\mu$ and $\core(\lambda)=\core(\mu)$.

\begin{defn}\label{WreathDef}
For $\lambda$ with $\quot(\lambda)\vdash n$, the \textit{transformed wreath Macdonald polynomial} $H_\lambda(q,t)=H_\lambda\left[ X^\bullet; q,t \right]$ is the element of $\Lambda_{q,t}^{\otimes\ell}$ characterized by
\begin{enumerate}
\item $H_\lambda\left[ (1-q\sigma^{-1})X^\bullet ;q,t \right]$ lies in the span of $\{s_{\quot(\mu)} :\mu\ge_\ell\lambda\}$;
\item $H_\lambda\left[ (1-t^{-1}\sigma^{-1})X^\bullet; q,t \right]$ lies in the span of $\{s_{\quot(\mu)} :\mu\le_\ell\lambda\}$;
\item the coefficient of the trivial representation of $\Gamma_n$ is 1.
\end{enumerate}
%The \textit{transformed positive wreath Macdonald polynomial} $H^+_\lambda(q,t)\in\Lambda_{q,t}^{\otimes\ell}\otimes\CC[Q]$ is characterized by
%\begin{enumerate}[(i)]
%\item $\bigwedge_q^+H_\lambda^+(q,t)$ lies in the span of $\{\vec{s}_\mu :\mu\ge\lambda\hbox{ and }\core(\mu)=\core(\lambda)\}$;
%\item $\bigwedge_{t^{-1}}^+H_\lambda^+(q,t)$ lies in the span of $\{\vec{s}_\mu :\mu\le\lambda\hbox{ and }\core(\mu)=\core(\lambda)\}$;
%\item the coefficient of the trivial representation of $\Gamma_n$ is 1.
%\end{enumerate}
When appropriate, we will abbreviate it by $H_\lambda$.
\end{defn}
\noindent The transformed wreath Macdonald polynomials are the ones defined in \cite{Haiman} and studied in our previous work \cite{WreathEigen}.
Their existence was proved by Bezrukavnikov and Finkelberg \cite{BezFink}.
We have rewritten the definition in terms of matrix plethysms using (\ref{pleths}).
%The existence of the positive variants is a consequence of the following:
%
%\begin{prop}
%$\iota^-(H_{{\tensor[^t]{\lambda}{}}}^-(t^{-1},q^{-1})^*)$ is a $\CC(q,t)$-multiple of $H_\lambda^+(q,t)$.
%\end{prop}
%
%\begin{proof}
%It suffices to check that $\iota^-(H_{{\tensor[^t]{\lambda}{}}}^-(t^{-1},q^{-1})^*)$ satisfies conditions (i) and (ii) for $H_\lambda^+(q,t)$.
%We will only check condition (i).
%Because both are tensor product operations, $\iota^-$ and $\bigwedge_q^+$ commute.
%On the other hand, 
%\[(\textstyle\bigwedge_q^+(-))^*=\bigwedge_q^-((-)^*)\]
%Thus,
%\[\textstyle\bigwedge_q^+\iota^-(H_{{\tensor[^t]{\lambda}{}}}^-(t^{-1},q^{-1})^*)=\iota^-((\textstyle\bigwedge_q^-H_{{\tensor[^t]{\lambda}{}}}^-(t^{-1},q^{-1}))^*)
%\]
%By definition, $\bigwedge_q^-H_{{\tensor[^t]{\lambda}{}}}^-(t^{-1},q^{-1})$ is in the span of $\vec{s}_{\mu}$ with $\mu\le \tensor[^t]{\lambda}{}$ with $\core(\mu)=\core(\tensor[^t]{\lambda}{})$
%From the description of columns and rows of the $\ell$-quotient in \ref{Orders}, it is straightforward to see that if $\quot(\mu)=(\mu^0,\ldots, \mu^{\ell-1})$, then
%\[\quot(\tensor[^t]{\mu}{})=(\tensor[^t]{\mu}{}^{\ell-1},\ldots, \tensor[^t]{\mu}{}^0)\]
%We then have that $\iota^-(\vec{s}_\mu^*)=\vec{s}_{\tensor[^t]{\mu}{}}$, from which condition (i) follows.
%\end{proof}

\subsubsection{Sectors}\label{Sectors}
Recall from \ref{CoreQuotDec} that $Q$ denotes the $A_{\ell-1}$ root lattice, and we can use it to index $\ell$-cores.
For $\alpha\in Q$, we denote the corresponding basis element in $\CC[Q]$ by $e^\alpha$.
We will work in the ring
\[\Lambda^{\otimes\ell}\otimes\CC[Q]\]
%In this ring, for a partition $\lambda$, we define
%\[\vec{s}_\lambda&:=s_{\quot(\lambda)}\otimes e^{\core(\lambda)}\]
%\begin{align*}
%\vec{e}_\lambda&:=e_{\quot(\lambda)}\otimes e^{\core(\lambda)}\\
%\vec{h}_\lambda&:=h_{\quot(\lambda)}\otimes e^{\core(\lambda)}\\
%\vec{s}_\lambda&:=s_{\quot(\lambda)}\otimes e^{\core(\lambda)}
%\end{align*}
%We will extend the maps $(-)^*$, $\iota$, and $\iota^{\pm}$ to $\Lambda\otimes\CC[Q]$ so that
%\begin{align*}
%(f\otimes e^\alpha)^*&=f^*\otimes e^\alpha\\
%\iota(f\otimes e^\alpha)&=(\iota f)\otimes e^{\tensor[^t]{\alpha}{}}\\
%\iota^\pm(f\otimes e^\alpha)&=(\iota^\pm f)\otimes e^{\tensor[^t]{\alpha}{}}
%\end{align*}
\begin{prop}
$\{H_\lambda\otimes e^{\core(\lambda)}\}$ is a basis of $\Lambda^{\otimes\ell}\otimes\CC[Q]$.
\end{prop}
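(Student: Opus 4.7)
The plan is to prove the basis property by first working inside each "core sector" and then assembling. Decompose
\[
\Lambda_{q,t}^{\otimes\ell}\otimes\CC[Q]\;=\;\bigoplus_{\alpha\in Q}\Lambda_{q,t}^{\otimes\ell}\otimes e^{\alpha},
\]
and observe that each element $H_\lambda\otimes e^{\core(\lambda)}$ belongs to the summand for $\alpha=\core(\lambda)$. Hence the claim reduces to showing that for every fixed $\alpha\in Q$ the set $\{H_\lambda:\core(\lambda)=\alpha\}$ is a basis of $\Lambda_{q,t}^{\otimes\ell}$.

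For fixed $\alpha$, the core-quotient bijection (the James proposition just cited) identifies the partitions with core $\alpha$ with the $\ell$-multipartitions via $\lambda\mapsto\quot(\lambda)$. Consequently $\{s_{\quot(\lambda)}:\core(\lambda)=\alpha\}$ is nothing other than the Schur basis of $\Lambda_{q,t}^{\otimes\ell}$, merely reindexed by $\lambda$. By the preceding proposition, $\bigwedge_q^{-}$ is a $\CC(q,t)$-algebra automorphism of $\Lambda_{q,t}^{\otimes\ell}$, so condition (1) of Definition \ref{WreathDef} reads: the element $\bigwedge_q^{-}H_\lambda$ lies in the span of $\{s_{\quot(\mu)}:\mu\ge_\ell\lambda\}$. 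Since $\mu\ge_\ell\lambda$ forces $\core(\mu)=\alpha$ and $|\quot(\mu)|=|\quot(\lambda)|$, working in each graded piece (fixed $|\quot(\lambda)|$) this is an upper-triangular statement with respect to dominance order on partitions of fixed core and fixed size.

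The remaining step is to check that the diagonal coefficient, i.e.\ the $s_{\quot(\lambda)}$-coefficient of $\bigwedge_q^{-}H_\lambda$, is nonzero for every $\lambda$. This is the main obstacle: it does not follow from the defining conditions (1)--(3) in isolation. However, it is built into the existence and characterization theorem of Bezrukavnikov and Finkelberg \cite{BezFink}, which produces $H_\lambda$ with an explicit unitriangular leading term (paralleling the unitriangularity $\tilde H_\lambda=s_\lambda+\text{(lower terms in dominance)}$ from Haiman's treatment of classical Macdonald positivity). Granted this nonvanishing, the upper-triangular change-of-basis matrix from $\{\bigwedge_q^{-}H_\lambda:\core(\lambda)=\alpha\}$ to $\{s_{\quot(\mu)}:\core(\mu)=\alpha\}$ is invertible on every finite-dimensional graded piece, so $\{H_\lambda:\core(\lambda)=\alpha\}$ is a basis of $\Lambda_{q,t}^{\otimes\ell}$. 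Assembling these bases across all $\alpha\in Q$ yields the claimed basis of $\Lambda_{q,t}^{\otimes\ell}\otimes\CC[Q]$.
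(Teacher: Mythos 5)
The paper states this proposition without proof, essentially treating it as a consequence of the Bezrukavnikov--Finkelberg existence theorem cited a few lines earlier, so your write-up is filling a gap the paper leaves implicit rather than diverging from a given argument. Your reduction is the natural one and is correct: decomposing by core sectors, passing through the plethystic automorphism $\bigwedge_q^{-}$, reindexing $\{s_{\quot(\lambda)}:\core(\lambda)=\alpha\}$ as the Schur basis via the core--quotient bijection, and then invoking triangularity in each finite graded piece. You also correctly isolate the real issue, namely that the triangularity in Definition \ref{WreathDef} alone does not force the diagonal coefficient $a_{\lambda\lambda}$ of $s_{\quot(\lambda)}$ in $\bigwedge_q^{-}H_\lambda$ to be nonzero, so some external input is required. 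The one inaccuracy is in the parenthetical: Bezrukavnikov--Finkelberg do not produce a ``unitriangular'' leading term, and the model you cite is wrong even for $\ell=1$ --- it is $\tilde H_\lambda[(1-q)X]$, not $\tilde H_\lambda$ itself, that is dominance-triangular, and its coefficient on $s_\lambda$ is a nontrivial element of $\CC(q,t)$ rather than $1$ (the normalization condition (3) fixes a different coefficient, namely that of the trivial representation). What you actually need, and what existence plus uniqueness does supply, is simply that $a_{\lambda\lambda}\neq 0$ in $\CC(q,t)$; note also that the paper later deduces exactly this nonvanishing (for the $(1-t^{-1}\sigma^{-1})$ side) from Proposition \ref{PairProp}, whose own proof invokes the basis statement, so within this paper the basis property and the nonvanishing are two faces of the same external input rather than one following from the other. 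Replace ``unitriangular leading term'' by ``nonzero diagonal coefficient in $\CC(q,t)$'' and the argument is complete.
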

\noindent
Finally, if $\alpha=(a_0,\ldots, a_{\ell-1})$, then we define
\[\tensor[^t]{\alpha}{}=(-a_{\ell-1},-a_{\ell-2},\ldots,-a_1,-a_0 )\]
This operation is defined so that if $\core(\lambda)=\alpha$, then $\core(\tensor[^t]{\lambda}{})=\tensor[^t]{\alpha}{}$.

\subsubsection{Transformed pairing}
Next, we consider a pairing under which $\{H_\lambda\}$ is some kind of orthogonal basis.
Recall that the Hall pairing $\langle -,-\rangle$ corresponds to the Hom pairing in $R$.
Alternatively, we can twist with $\iota$ to obtain the pairing corresponding to tensoring and taking invariants:
\[
\langle f,g\rangle^*:=\langle {\textstyle f\left[ \iota X^\bullet \right]}, g\rangle
\]
Note that under $\langle -,-\rangle^*$, the shift plethysm $\sigma$ is self-adjoint.
On the other hand, under $\langle-,-\rangle$, $\sigma$ is adjoint to $\sigma^{-1}$.
The \textit{transformed wreath Macdonald pairing} is defined as follows:
\begin{align*}
\left\langle f,g\right\rangle_{q,t}'&:=\left\langle\textstyle f,g\left[ \sigma(1-t^{-1}\sigma^{-1})(1-q\sigma^{-1})X^\bullet \right]\right\rangle^*\\
&=\left\langle\textstyle f\left[ \iota X^\bullet \right],g\left[ \sigma(1-t^{-1}\sigma^{-1})(1-q\sigma^{-1})X^\bullet \right]\right\rangle
\end{align*}
\textit{A priori}, this is only defined on $\Lambda_{q,t}^{\otimes\ell}$.
We extend it to $\Lambda_{q,t}^{\otimes\ell}\otimes\CC[Q]$ by setting
\[\langle f\otimes e^\alpha,g\otimes e^{\beta}\rangle_{q,t}'=\delta_{\alpha,\tensor[^t]{\beta}{}}\langle f,g\rangle_{q,t}'\]
where $\tensor[^t]{\beta}{}$ was defined in \ref{Sectors}.
%From formula (\ref{pleths}), it is easy to see that this pairing is not symmetric.
To find a dual basis to $\{H_\lambda\otimes e^{\core(\lambda)}\}$, we introduce the following \textit{orthogonal variant}:
\[
\textstyle H_\lambda^*\left[X^\bullet;q,t\right]:=H_\lambda\left[-X^\bullet;t^{-1},q^{-1}  \right]
\]
As with $H_\lambda$, we abbreviate it by $H_\lambda^*$.

\begin{prop}\label{PairProp}
The pairing $\langle H_{\tensor[^t]{\lambda}{}}^*,H_\mu\rangle_{q,t}'$ is nonzero if and only if $\lambda=\mu$.
\end{prop}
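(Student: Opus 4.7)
The plan is to use the Schur triangularity of $H_\mu$ (Definition \ref{WreathDef} conditions (1) and (2)) together with the analogous triangularity of $H^*_{\tensor[^t]{\lambda}{}}$ (obtained by unpacking its definition), and to combine these with the self-adjointness of $\sigma$ (and hence of $1-q\sigma^{-1}$ and $1-t^{-1}\sigma^{-1}$) under $\langle-,-\rangle^*$ to execute a dominance sandwich analogous to the one used for modified Macdonald polynomials.

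First I would redistribute the pairing in two ways. Redistribution A is
\[
\left\langle H^*_{\tensor[^t]{\lambda}{}}, H_\mu\right\rangle_{q,t}' = \left\langle \textstyle H^*_{\tensor[^t]{\lambda}{}}\left[(1-t^{-1}\sigma^{-1})X^\bullet\right],\, H_\mu\left[\sigma(1-q\sigma^{-1})X^\bullet\right]\right\rangle^*,
\]
and redistribution B has the roles of $1-q\sigma^{-1}$ and $1-t^{-1}\sigma^{-1}$ swapped. Applying condition (1) of Definition \ref{WreathDef} to $H_\mu$ with $Y^\bullet = \sigma X^\bullet$, together with the identity $s_{\vec\alpha}[\sigma X^\bullet] = s_{\sigma\vec\alpha}$, places the right argument of A in $\mathrm{span}\{s_{\sigma\quot(\nu)} : \nu\ge_\ell\mu\}$. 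For the left argument, unpack $H^*_{\tensor[^t]{\lambda}{}}[X^\bullet;q,t] = H_{\tensor[^t]{\lambda}{}}[-X^\bullet; t^{-1}, q^{-1}]$ and apply condition (1) for $H_{\tensor[^t]{\lambda}{}}$ at the swapped parameters $(t^{-1}, q^{-1})$; then convert negated-alphabet wreath Schur functions to positive-alphabet ones (componentwise transposition of the multipartition index) and invoke the standard identity relating $\quot(\tensor[^t]{\nu}{})$ to $\quot(\nu)$ via $\iota$, $\sigma$, and componentwise transposition, which is derivable from the Maya-diagram rule $m(\tensor[^t]{\nu}{})(n) = -m(\nu)(-n-1)$. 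After reindexing $\mu' := \tensor[^t]{\nu'}{}$, the left argument lies in $\mathrm{span}\{s_{\iota\sigma\quot(\mu')} : \mu'\le_\ell\lambda\}$.

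Using $\langle s_{\vec\alpha},s_{\vec\beta}\rangle^* = \delta_{\iota\vec\alpha,\vec\beta}$, the pairing reduces to a sum over pairs $(\mu',\nu)$ with $\mu'\le_\ell\lambda$, $\nu\ge_\ell\mu$, and $\quot(\mu') = \quot(\nu)$. Redistribution B (using condition (2) in place of (1)) produces the parallel sum with $\mu'\ge_\ell\lambda$, $\nu\le_\ell\mu$, and the same quotient matching. Both expressions equal the pairing; the only way both can be simultaneously non-empty is if $\core(\lambda) = \core(\mu)$ (forced by the matching of quotients across the two sectors) and the two dominance intervals collapse, yielding $\mu' = \nu = \lambda = \mu$. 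Non-vanishing of the diagonal term follows from the normalization in condition (3) of Definition \ref{WreathDef}, which fixes each leading Schur coefficient to a specified nonzero scalar. The principal obstacle is deriving the precise combinatorial identity relating $\quot(\tensor[^t]{\nu}{})$ and $\quot(\nu)$ via $\iota$, $\sigma$, and componentwise transposition, and then verifying that the two constraint sets genuinely intersect only at $\lambda = \mu$; both points require careful bookkeeping of charges and color-index shifts in the Maya-diagram description of partition transposition.
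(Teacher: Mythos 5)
Your vanishing argument is essentially the same as the paper's: both redistribute the plethystic twist across $\langle-,-\rangle^*$ using self-adjointness of $\sigma$, invoke the two triangularity conditions of Definition \ref{WreathDef} (on $H_\mu$ at $(q,t)$ and on $H_{\tensor[^t]{\lambda}{}}$ at swapped parameters $(t^{-1},q^{-1})$), and then reconcile the quotient indices via the identity relating $\quot(\tensor[^t]{\nu}{})$ to $\quot(\nu)$ through componentwise transposition, $\iota$, and a $\sigma$-shift. The paper carries the $\iota$-twist by passing from $\langle-,-\rangle^*$ to $\langle-,-\rangle$ while you keep everything in $\langle-,-\rangle^*$, and it packages the transposition identity as $s_{\quot(\tensor[^t]{\nu}{})}[-\iota\sigma X^\bullet]=(-1)^{|\quot(\nu)|}s_{\quot(\nu)}$; your Maya-diagram reflection rule $m(\tensor[^t]{\nu}{})(n)=-m(\nu)(-n-1)$ is the right starting point for proving it. These are cosmetic differences, and the two-sided (A/B) sandwich matches the paper's ``the case $\lambda\not\le\mu$ is similar.''

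However, your non-vanishing argument has a genuine gap. You assert that condition (3) of Definition \ref{WreathDef} ``fixes each leading Schur coefficient to a specified nonzero scalar,'' but condition (3) normalizes the coefficient of the \emph{trivial representation} of $\Gamma_n$ in $H_\lambda$ itself, which is a completely different coefficient from the coefficient of $s_{\quot(\lambda)}$ in $H_\lambda[(1-q\sigma^{-1})X^\bullet]$ or $H_\lambda[(1-t^{-1}\sigma^{-1})X^\bullet]$. Nothing in the definition directly guarantees these leading plethystic Schur coefficients are nonzero. In fact the paper explicitly draws the opposite logical arrow: right after the proposition it remarks that the non-vanishing of the coefficient of $s_{\quot(\lambda)}$ in $H_\lambda[(1-t^{-1}\sigma^{-1})X^\bullet]$ is a \emph{consequence} of Proposition \ref{PairProp}, not an input to it. The paper's non-vanishing argument is instead abstract: since $\{H_\lambda\otimes e^{\core(\lambda)}\}$ and $\{H_\lambda^*\otimes e^{\core(\lambda)}\}$ are both bases of $\Lambda_{q,t}^{\otimes\ell}\otimes\CC[Q]$ and $\langle-,-\rangle_{q,t}'$ is nondegenerate, the pairing matrix is invertible; combined with the already-proven off-diagonal vanishing, this forces the diagonal entries to be nonzero. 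You should replace your condition-(3) argument with this nondegeneracy argument to close the gap.
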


\begin{proof}
If $\lambda\not\ge\mu$, then by adjunction, we have
\begin{align}
\nonumber
\langle H_{\tensor[^t]{\lambda}{}}^*,H_\mu\rangle_{q,t}'&= \left\langle\textstyle H^*_{\tensor[^t]{\lambda}{}}, H_\mu\left[ \sigma(1-t^{-1}\sigma^{-1})(1-q\sigma^{-1})X^\bullet \right]\right\rangle^*\\
\nonumber
&=\left\langle\textstyle H^*_{\tensor[^t]{\lambda}{}}\left[\sigma(1-t^{-1}\sigma^{-1}) X^\bullet \right], H_\mu\left[ (1-q\sigma^{-1})X^\bullet \right]\right\rangle^*\\
\label{PairingRewrite}
&= \left\langle\textstyle H_{\tensor[^t]{\lambda}{}}\left[-\iota\sigma(1-t^{-1}\sigma^{-1}) X^\bullet;t^{-1},q^{-1} \right], H_\mu\left[ (1-q\sigma^{-1})X^\bullet; q,t \right]\right\rangle
\end{align}
By the first triangularity condition in Definition \ref{WreathDef}, the second argument of (\ref{PairingRewrite}) is a linear combination of $s_{\quot(\nu)}$ for $\nu\ge_\ell \mu$.
Similarly, the first argument of (\ref{PairingRewrite}) is a linear combination of $s_{\quot(\tensor[^t]{\nu}{})}\left[ -\iota \sigma X^\bullet \right]$ for $\tensor[^t]{\nu}{}\ge_\ell\tensor[^t]{\lambda}{}$. 
The key observation is that
\[
\textstyle s_{\quot(\tensor[^t]{\nu}{})}\left[ -\iota\sigma X^\bullet \right]=(-1)^{|\quot(\nu)|}s_{\quot(\nu)}
\]
To see this, note that $\iota\sigma$ will send color $i$ to color $-i-1$, while transposing an $\ell$-strip starting at an $i$-node yields an $\ell$-strip starting at an $-i-1$ node.
From this, it follows that the pairing (\ref{PairingRewrite}) is zero when $\lambda\not\ge\mu$.
The case $\lambda\not\le\mu$ is similar, and thus the pairing is always zero when $\lambda\not=\mu$.
Finally, because $\left\{ H_\lambda \otimes e^{\core(\lambda)}\right\}$ and $\left\{ H_\lambda^*\otimes e^{\core(\lambda)} \right\}$ are bases and $\langle -,-\rangle'_{q,t}$ is nondegenerate, the pairing for $\lambda=\mu$ must be nonzero.
%
%, then by the definition of the transformed wreath Macdonald polynomials,
%\[\langle H_\lambda^-,H_\mu^+\rangle'_{q,t}=0\]
%Next, observe that using formula (\ref{pleths}), one can calculate that $\bigwedge_q^+$ is adjoint to $\bigwedge_q^-$ with respect to the Hall pairing and likewise with $t^{-1}$ swapped for $q$. 
%Also, it follows from their definitions as tensor products that all the operators $\{\bigwedge_q^\pm,\bigwedge_{t^{-1}}^\pm\}$ commute.
%Therefore, if $\lambda\not\ge\mu$, then
%\[\langle H_\lambda^-,H_\mu^+\rangle'_{q,t}=\langle\textstyle\bigwedge_{t^{-1}}^-H_\lambda^-,\bigwedge_q^+H_\mu^+\rangle=0\qedhere\]
\end{proof}

\begin{rem}
A cleaner approach would be to define $\langle -,-\rangle_{q,t}'$ without the extra twist by $\iota\sigma$ and move it into the definition of $H_\lambda^*$.
Namely, one can check that $H_{\tensor[^t]{\lambda}{}}\left[ -\iota\sigma X^\bullet; t^{-1},q^{-1} \right]$ satisfies the triangularity conditions of Definition \ref{WreathDef} but with $\sigma^{-1}$ replaced by $\sigma$:
\begin{enumerate}
\item $H_{\tensor[^t]{\lambda}{}}\left[ (1-q\sigma)(-\iota\sigma X^\bullet) ;t^{-1},q^{-1} \right]$ lies in the span of $\{s_{\quot(\mu)} :\mu\ge_\ell\lambda\}$;
\item $H_{\tensor[^t]{\lambda}{}}\left[ (1-t^{-1}\sigma)(-\iota\sigma X^\bullet); t^{-1},q^{-1} \right]$ lies in the span of $\{s_{\quot(\mu)} :\mu\le_\ell\lambda\}$.
\end{enumerate}
Thus, we can instead use these altered triangularity conditions (along with a natural normalization condition) to define the positive variants.
While this approach is more natural, it leads to ``one twist/swap too many'' for us (the author) to handle when we include the quantum toroidal algebra later on.
\end{rem}

\subsubsection{Ordinary polynomials}
Recall that for $\ell=1$, the \textit{ordinary} Macdonald polynomial $P_\lambda$ is obtained from $H_\lambda(q,t^{-1})$ by renormalizing $H_\lambda\left[ (1-t)X ; q,t^{-1}\right]$ so that the leading term $s_{\lambda}$ has coefficient $1$.
The second triangularity condition (2) of Definition \ref{WreathDef} ensures that the latter is upper triangular with respect to the Schur function basis ordered by dominance.
We similarly define the ordinary polynomials for general $\ell$.
Note that the proof of Proposition \ref{PairProp} implies that the coefficient of $s_{\quot(\lambda)}$ in $H_\lambda\left[ (1-t^{-1}\sigma^{-1})X^\bullet \right]$ is nonzero.

\begin{defn}\label{OrdDef}
The \textit{ordinary wreath Macdonald polynomial} $P_\lambda(q,t)=P_\lambda\left[ X^\bullet ;q,t \right]$ is the renormalization of $H_\lambda\left[ (1-t\sigma^{-1})X^\bullet ; q,t^{-1} \right]$ such that the coefficient of $s_{\quot(\lambda)}$ is $1$.
%Denote the renormalization factor by $\mathfrak{r}_\lambda(q,t)$, so
%\[P_\lambda(q,t)=\frac{H_\lambda\left[ (1-t\sigma^{-1})X^\bullet ;q,t^{-1} \right]}{\rrr_\lambda(q,t)}\]
We also set $P_\lambda^*(q,t):=P_\lambda\left[ -X^\bullet ;t,q \right]$.
\end{defn}

Inspired by the $\ell=1$ case, we define the (ordinary) \textit{wreath Macdonald pairing} by
\[
\langle f, g\rangle_{q,t}:=\left\langle f ,  g\left[ \frac{\sigma(1-q\sigma^{-1})}{(1-t\sigma^{-1})}X^\bullet \right]\right\rangle^*
\]
We extend it to $\Lambda^{\otimes\ell}\otimes\CC[Q]$ like we did with $\langle -,-\rangle_{q,t}'$, namely by making $e^\alpha$ dual to $e^{\tensor[^t]{\alpha}{}}$.
%Because of their normalizations, the pair $\left\{ P_\lambda^- \right\}$ and $\left\{ Q_\lambda^+ \right\}$ and the pair $\left\{ Q_\lambda^- \right\}$ and $\left\{ P_\lambda^+ \right\}$ each forms a dual pair of orthonormal bases for $\langle -, -\rangle_{q,t}$:
The \textit{dual (ordinary) wreath Macdonald polynomial} $Q_\lambda(q,t)$ is the renormalization of $H_\lambda\left[ (1-t\sigma^{-1})X^\bullet;q,t^{-1} \right]$ such that the coefficient of $s_{\quot(\lambda)}$ in
\[
Q_\lambda\left[ \frac{(1-q\sigma^{-1})}{(1-t\sigma^{-1})}X^\bullet \right]
\]
is $1$.
%We denote the renormalization factor by $\sss_\lambda(q,t)$, so
%\[
%Q_\lambda=\frac{H_\lambda\left[ (1-t\sigma^{-1})X^\bullet;q,t^{-1} \right]}{\sss_\lambda(q,t)}
%\]
Finally, we set $Q^*_\lambda:=Q_\lambda\left[ -X^\bullet;t,q \right]$.
As before, we will drop $(q,t)$ for $P_\lambda$, $P^*_\lambda$, $Q_\lambda$, and $Q^*_\lambda$ if it causes no confusion, although care is certainly necessary as these parameters are often swapped and inverted.
The following is proved much like Proposition \ref{PairProp} (note the transpose):

\begin{prop}
We have
\[
\langle P_{\tensor[^t]{\lambda}{}}^*, Q_\mu\rangle_{q,t}=\langle Q_{\tensor[^t]{\lambda}{}}^*, P_\mu\rangle_{q,t}=\delta_{\lambda,\mu}
\]
\end{prop}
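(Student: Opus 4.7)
The plan is to mirror the proof of Proposition \ref{PairProp}, with $P^*_{\tensor[^t]{\lambda}{}}$ and $Q_\mu$ taking the place of $H^*_{\tensor[^t]{\lambda}{}}$ and $H_\mu$. Since $\langle e^\alpha, e^\beta\rangle_{q,t}$ vanishes unless $\alpha = \tensor[^t]{\beta}{}$, I first reduce to $\core(\lambda) = \core(\mu)$ and work inside the appropriate sector of $\Lambda^{\otimes\ell}\otimes\CC[Q]$. Unfolding the pairing,
\[
\langle P^*_{\tensor[^t]{\lambda}{}}, Q_\mu\rangle_{q,t} = \bigl\langle P^*_{\tensor[^t]{\lambda}{}},\, Q_\mu\bigl[\sigma(1-q\sigma^{-1})(1-t\sigma^{-1})^{-1}X^\bullet\bigr]\bigr\rangle^*.
\]

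Next, I convert to the transformed setting. Definitions \ref{TransPol} and \ref{OrdDef} give $P^*_{\tensor[^t]{\lambda}{}}(q,t) = \alpha\, H_{\tensor[^t]{\lambda}{}}[-(1-q\sigma^{-1})X^\bullet; t, q^{-1}]$ and $Q_\mu(q,t) = \beta\, H_\mu[(1-t\sigma^{-1})X^\bullet; q, t^{-1}]$ for explicit scalars $\alpha, \beta$ fixed by the normalizations. Since $\sigma$ commutes with $1-t\sigma^{-1}$, the $(1-t\sigma^{-1})^{-1}$ in the pairing cancels the $(1-t\sigma^{-1})$ in $Q_\mu$, collapsing the right argument to $\beta\, H_\mu[\sigma(1-q\sigma^{-1})X^\bullet; q, t^{-1}]$. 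I then shuttle the remaining plethystic factors across the pairing using the self-adjointness of $\sigma$ under $\langle -, -\rangle^*$, convert to the Hall pairing by absorbing $\iota$ into the left argument, and invoke the identity $s_{\quot(\tensor[^t]{\nu}{})}[-\iota\sigma X^\bullet] = (-1)^{|\quot(\nu)|} s_{\quot(\nu)}$ from the proof of Proposition \ref{PairProp} to reinterpret the $-\iota\sigma$ twist as a transposition on the indexing partitions.

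At this stage, two applications of the triangularity conditions of Definition \ref{WreathDef}---one to $H_\mu$ at parameters $(q, t^{-1})$ and the other to $H_{\tensor[^t]{\lambda}{}}$ at parameters $(t, q^{-1})$, composed with the transpose flip---locate the two arguments in complementary Schur spans, one around $\lambda$ and one around $\mu$. Two different choices of how to split and shuttle the plethystic factor yield the two vanishing conditions $\lambda \ge_\ell \mu$ and $\lambda \le_\ell \mu$, which together force $\lambda = \mu$. For the diagonal case $\lambda = \mu$, Definition \ref{OrdDef} fixes the coefficient of $s_{\quot(\lambda)}$ at $1$ in both $P_\lambda$ and $Q_\lambda[(1-q\sigma^{-1})(1-t\sigma^{-1})^{-1}X^\bullet]$; feeding these normalizations through the plethystic substitutions and using Hall orthonormality of $\{s_{\vec\nu}\}$ gives the pairing value $1$. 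The parallel identity $\langle Q^*_{\tensor[^t]{\lambda}{}}, P_\mu\rangle_{q,t} = \delta_{\lambda, \mu}$ follows by the symmetric argument, swapping the roles of $(1-q\sigma^{-1})$ and $(1-t\sigma^{-1})$ to reflect the duality between $P$ and $Q$ in Definition \ref{OrdDef}.

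The main obstacle is identifying the correct pair of plethystic splittings. In Proposition \ref{PairProp} the factor $\sigma(1-t^{-1}\sigma^{-1})(1-q\sigma^{-1})$ splits symmetrically and engages both triangularity conditions directly. Here the factor $\sigma(1-q\sigma^{-1})(1-t\sigma^{-1})^{-1}$ is less symmetric; moreover, the parameter swaps $q \leftrightarrow t$ and $t \to t^{-1}$ built into the definitions of $P^*$ and $Q$ must be tracked carefully so that the triangularity directions on the two arguments---after the $-\iota\sigma$ flip---end up complementary rather than parallel, producing the desired $\lambda = \mu$ conclusion from the combination of both triangularity conditions.
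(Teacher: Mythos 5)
Your overall plan—mirroring Proposition \ref{PairProp}, unfolding the pairing, converting $P^{*}$ and $Q$ to their $H$-forms, cancelling the $(1-t\sigma^{-1})^{\pm1}$ factors, shuttling across $\langle -,-\rangle^{*}$, converting to the Hall pairing, and invoking the identity $s_{\quot(\tensor[^t]{\nu}{})}[-\iota\sigma X^{\bullet}]=(-1)^{|\quot(\nu)|}s_{\quot(\nu)}$—is precisely the approach the paper gestures at with ``proved much like Proposition \ref{PairProp} (note the transpose).'' So at the level of strategy you and the paper agree.

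There is, however, a genuine gap in the vanishing step. Your key claim is that ``two different choices of how to split and shuttle the plethystic factor yield the two vanishing conditions $\lambda\ge_{\ell}\mu$ and $\lambda\le_{\ell}\mu$.'' In Proposition \ref{PairProp}, this works because the plethysm $\sigma(1-t^{-1}\sigma^{-1})(1-q\sigma^{-1})$ contains one factor of each type: the split that puts $(1-t^{-1}\sigma^{-1})$ on the $H^{*}_{\tensor[^t]{\lambda}{}}$ side engages condition (1) at parameters $(t^{-1},q^{-1})$, and after the transpose flip this lands in $\mathrm{span}\{s_{\quot(\nu)}:\nu\le_{\ell}\lambda\}$, while the $(1-q\sigma^{-1})$ on the $H_{\mu}$ side engages condition (1) at $(q,t)$ to land in $\mathrm{span}\{s_{\quot(\nu')}:\nu'\ge_{\ell}\mu\}$; the opposite split reverses both signs. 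In your setting, by contrast, once you cancel the $(1-t\sigma^{-1})$ in $Q_{\mu}$ against the $(1-t\sigma^{-1})^{-1}$ in the pairing, the right argument becomes a multiple of $H_{\mu}[\sigma(1-q\sigma^{-1})X^{\bullet};q,t^{-1}]$ and the left argument is already a multiple of $H_{\tensor[^t]{\lambda}{}}[-(1-q\sigma^{-1})X^{\bullet};t,q^{-1}]$---\emph{both} carry only $(1-q\sigma^{-1})$-type plethysms. Condition (1) at $(q,t^{-1})$ puts the right argument in $\mathrm{span}\{s_{\quot(\nu')}:\nu'\ge_{\ell}\mu\}$, but condition (2) at $(t,q^{-1})$ (where the $t$-slot is $q^{-1}$, so $(1-q\sigma^{-1})$ is the condition (2) plethysm) puts $H_{\tensor[^t]{\lambda}{}}[(1-q\sigma^{-1})X^{\bullet};t,q^{-1}]$ in $\mathrm{span}\{s_{\quot(\nu'')}:\nu''\le_{\ell}\tensor[^t]{\lambda}{}\}$; after the transpose flip this becomes $\mathrm{span}\{s_{\quot(\nu)}:\nu\ge_{\ell}\lambda\}$. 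Both arguments therefore sit in $\ge_{\ell}$ spans, which is satisfied simultaneously for plenty of $\nu$ even when $\lambda\ne\mu$---no contradiction, no vanishing. There is no remaining $(1-t\sigma^{-1})$ to redistribute, so the ``opposite split'' you invoke has nothing to move. To obtain the complementary $\le_{\ell}$ condition you would need to \emph{not} collapse $Q_{\mu}$'s factor (keeping condition (2) at $(q,t^{-1})$ available, which gives $\le_{\ell}\mu$), or else relate $\langle-,-\rangle_{q,t}$ back to the transformed pairing $\langle-,-\rangle'_{q,t^{-1}}$ and cite Proposition \ref{PairProp} directly, absorbing the residual $(1-q\sigma^{-1})(1-t\sigma^{-1})^{-1}$ twist by self-adjointness under $\langle-,-\rangle'$. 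As written, your proposal asserts the complementary Schur-span localization without a mechanism that produces it, and that is the crux of the proof.
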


\noindent In this paper, we will not work directly with $P_\lambda$ or $Q_\lambda$, but rather with
\begin{align*}
\tilde{P}_\lambda[X^\bullet;q,t]&:=P_\lambda\left[ (1-t\sigma^{-1})^{-1}X^\bullet;q,t \right]\\
\tilde{Q}_\lambda[X^\bullet;q,t]&:=Q_\lambda\left[ (1-t\sigma^{-1})^{-1}X^\bullet;q,t \right]
\end{align*}
Note that $\tilde{P}_\lambda(q,t^{-1})$ and $\tilde{Q}_\lambda(q,t^{-1})$ are scalar multiples of $H_\lambda$.

We end by stating a natural wreath generalization of the Macdonald norm formula (\ref{NormForm}) that has been confirmed by computations of Orr and Shimozono:
\begin{conj}[Conjecture 3.36 of \cite{OSWreath}]\label{NormConj}
For a node $\square=(a,b)\in\lambda$, let:
\begin{itemize}
\item $a(\square)=\lambda_{b}-a$ denote its arm length;
\item $\ell(\square)=\tensor[^t]{\lambda}{}_a-b$ denote its leg length;
\item $h(\square)=a(\square)+\ell(\square)+1$ denote its hook length.
\end{itemize} 
We then have the norm formula:
\[
\langle P_{\tensor[^t]{\lambda}{}}^*,P_\lambda\rangle_{q,t}=\prod_{\substack{\square\in\lambda\\ h(\square)\equiv 0\,\hbox{\tiny{\rm mod $\ell$} }}}
\frac{1-q^{a(\square)+1}t^{\ell(\square)}}{1-q^{a(\square)}t^{\ell(\square)+1}}
\]
\end{conj}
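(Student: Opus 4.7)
The plan is to derive the conjectured norm formula via the shuffle realization of $\UTor$, adapting Feigin--Tsymbaliuk's proof of the ordinary formula (\ref{NormForm}) on the rank-one toroidal side. The first move is to exploit the fact that the polynomials $\tilde{P}_\lambda(q,t^{-1})$ and $\tilde{Q}_\lambda(q,t^{-1})$ are scalar multiples of $H_\lambda$ (as noted just after Definition \ref{OrdDef}), and that the transformed pairing $\langle-,-\rangle'_{q,t}$ is diagonal in the transformed basis by Proposition \ref{PairProp}. Writing
\[
H_\lambda = k_\lambda(q,t)\,\tilde{P}_\lambda(q,t^{-1}),\qquad H_\lambda = l_\lambda(q,t)\,\tilde{Q}_\lambda(q,t^{-1}),
\]
and carefully tracking the plethystic substitutions relating $\langle-,-\rangle_{q,t}$ and $\langle-,-\rangle'_{q,t}$ reduces the conjectured formula to the computation of the single diagonal pairing $\langle H_{\tensor[^t]{\lambda}{}}^*, H_\lambda\rangle'_{q,t}$, up to an explicit product of the scalars $k_\lambda l_\lambda$ that must itself be checked to match (part of) the hook product in the conjecture.

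To evaluate that diagonal pairing, I would use the shuffle Pieri elements $E_{p,n}\in\UTor$ described in Section 2 (and in \cite{WreathEigen}), whose action on the Fock representation realizes multiplication by $e_n(p)$. Pick a color $p$ so that $\lambda$ is obtained from a strictly smaller partition $\mu$ (in $\ell$-quotient size) by adding a single $p$-colored box to $\quot(\mu)$. Expanding $e_1(p)H_\mu$ in the $H$-basis yields a wreath Pieri rule whose matrix coefficients are explicit shuffle integrals, and pairing both sides against $H_{\tensor[^t]{\lambda}{}}^*$ isolates one such coefficient on the left. Adjunction under $\langle-,-\rangle'_{q,t}$ then recasts the right-hand side in terms of an adjoint operator $e_1(p)^\dagger$ acting on $H_{\tensor[^t]{\lambda}{}}^*$; the Miki-square relation flagged in the introduction predicts that $e_1(p)^\dagger$ is the image of $e_1(p)$ under the square of the Miki automorphism, which in shuffle terms is a specific element in the opposite half of $\UTor$. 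Iterating along a chain of single-box additions from $\core(\lambda)$ up to $\lambda$, ordered via the left-to-right column order of Section \ref{Orders}, one telescopes the product of Pieri ratios and matches the result against the product over boxes of $\lambda$ with hook length divisible by $\ell$.

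The main obstacle, as the author repeatedly emphasizes, is the combinatorial and analytic complexity of the shuffle Pieri coefficients themselves. Each single box added in the quotient corresponds to adding an $\ell$-ribbon in $\lambda$, and the shuffle integral defining the matrix element of $E_{p,1}$ is a sum of residues indexed by the nodes of that ribbon, spread across all $\ell$ color-alphabets via the plethysms $\sigma$ and $\iota$ in the pairing. The ratio of Pieri coefficients produced by adjunction is therefore a rational function in $q,t$ whose explicit form is not manifestly equal to the hook product in the conjecture. Closing the gap requires an unexpected cancellation of the contributions from boxes of $\lambda$ whose hook is not divisible by $\ell$, together with a collapse of the surviving factors into the quotient $(1-q^{a(\square)+1}t^{\ell(\square)})/(1-q^{a(\square)}t^{\ell(\square)+1})$ attached to the newly added ribbon. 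I would attempt this either by a direct residue calculation organised by the column/row orders of Section \ref{Orders}, or by finding a generating-function identity in $\CC(q,t)$ expressing the shuffle integral in hook-product form---this is precisely the expedition the author reports as abandoned.
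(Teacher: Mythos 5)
The statement you are asked to prove is a \emph{conjecture} (restated from \cite{OSWreath}), and this paper contains no proof of it---the abstract says explicitly that ``none of the aforementioned formulas are successfully computed in this paper.'' Section~3 supplies only a reduction to shuffle-algebra matrix elements together with a single worked example ($\ell=3$, $\lambda=(4,3,1)$). So there is no paper proof to match; the question is whether your outline aligns with the paper's unfinished pathway, and in two important respects it does not.

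First, the paper's reduction never takes adjoints and never invokes the Miki-squared relation between $H_\lambda$ and $H^*_\lambda$. It introduces scalars $N^\pm_\lambda$, $M^\pm_\lambda$ via $(\tau^\pm_\upsilon\circ\varpi^{\pm1})(\tilde{P}_\lambda)|\core(\lambda)\rangle=N^\pm_\lambda|\lambda\rangle$ and similarly for $\tilde{Q}_\lambda$, and then uses $P_\lambda=(N^-_\lambda/M^-_\lambda)Q_\lambda$ together with $\langle P^*_{\tensor[^t]{\lambda}{}},Q_\lambda\rangle_{q,t}=1$ to get $\langle P^*_{\tensor[^t]{\lambda}{}},P_\lambda\rangle_{q,t}=N^-_\lambda/M^-_\lambda$ outright. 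Your proposal instead routes through adjunction under $\langle-,-\rangle'_{q,t}$ and the Miki-square structure; the remark following Proposition~\ref{PairProp} flags exactly that extra twist as ``one twist/swap too many,'' and the paper's $N/M$ device is designed to avoid it.

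Second, and more concretely, your induction uses single-box additions $e_1(p)$ to $\quot(\mu)$ and claims that pairing against $H^*_{\tensor[^t]{\lambda}{}}$ isolates a single coefficient. The paper's induction (Lemma~\ref{Induct}) peels an \emph{entire} column or row of $\quot(\lambda)$ at once using $E_{p,n}$, $H_{p,n}$ with $n$ the column or row length, and the telescope only closes because of a nontrivial uniqueness lemma (Lemma~5.2 of \cite{WreathEigen}): among all $\mu'\le_\ell\lambda'$ and all ways of adding $n$ nodes of each color without horizontal $p$-$(p+1)$ adjacency, only $\lambda'$ itself lands on $\lambda$. No analogous uniqueness holds for a single-box step, so at each stage $e_1(p)H_\mu$ produces many $H$-basis terms whose coefficients become entangled as you iterate; the telescoping you rely on would fail.

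Both routes, yours and the paper's, end at the same wall: the symmetrized shuffle integrals for $E_{p,n}$, $H_{p,n}$ have factorially many nonvanishing residue terms, and the cancellation mechanism that collapses them into the hook product over $h(\square)\equiv 0$ is unknown. Your outline correctly identifies this as the abandoned expedition, but it does not get closer than the paper does, and the two intermediate technical choices above would, if anything, make the endgame harder.
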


\section{Quantum toroidal and shuffle algebras}
In this section, $\ell\ge 3$.
Let $\qqq,\ddd$ be indeterminates and set $\FF:=\CC(\qqq,\ddd)$---we will specify their relationship to $q$ and $t$ later on.
We will make use of the delta function $\delta(z)$:
\[\delta(z)=\sum_{n\in\ZZ}z^n\]

\subsection{Quantum toroidal algebra}
For $i,j\in\ZZ/\ell\ZZ$, we let 
\[a_{i,i}=2,\, a_{i,i\pm1}=-1,\, m_{i,i\pm 1}=\mp 1,\hbox{ and }a_{i,j}=m_{i,j}=0\hbox{ otherwise}\]
Also, define
\[g_{i,j}(z):=\frac{\qqq^{a_{i,j}}z-1}{z-\qqq^{a_{i,j}}}\]

\subsubsection{Definition}
The \textit{quantum toroidal algebra} $\UTor$ is a $\FF$-algebra with generators
\[\{e_{i,n}, f_{i,n},\psi_{i,n},\psi_{i,0}^{-1},\gamma^{\pm\frac{1}{2}},\qqq^{d_1},\qqq^{d_2}\}_{i\in\ZZ/\ell\ZZ}^{n\in\ZZ}\]
To describe its relations, we define the following generating series of generators, called \textit{currents}:
\begin{align*}
e_i(z)&=\sum_{n\in\ZZ}e_{i,n}z^{-n}\\
f_i(z)&=\sum_{n\in\ZZ}f_{i,n}z^{-n}\\
\psi_i^\pm(z)&=\psi_{i,0}^{\pm1}+\sum_{n>0}\psi_{i,\pm n}z^{\mp n}
\end{align*}
The relations are then:
\begin{gather*}
[\psi_i^\pm(z),\psi_j^\pm(w)]=0,\,\gamma^{\pm\frac{1}{2}}\hbox{ are central},\\
\psi_{i,0}^{\pm1}\psi_{i,0}^{\mp1}=\gamma^{\pm\frac{1}{2}}\gamma^{\mp\frac{1}{2}}=\qqq^{\pm d_1}\qqq^{\mp d_1}=\qqq^{\pm d_2}\qqq^{\mp d_2}=1,\\
\qqq^{d_1}e_i(z)\qqq^{-d_1}=e_i(\qqq z),\, \qqq^{d_1}f_i(z)\qqq^{-d_1}=f_i(\qqq z),\, \qqq^{d_1}\psi_i^\pm(z)\qqq^{-d_1}=\psi_i^\pm(\qqq z),\\
\qqq^{d_2}e_i(z)\qqq^{-d_2}=\qqq e_i(z),\, \qqq^{d_2}f_i(z)\qqq^{-d_2}=\qqq^{-1} f_i(z),\, \qqq^{d_2}\psi_i^\pm(z)\qqq^{-d_2}=\psi_i^\pm( z),\\
g_{i,j}(\gamma^{-1}\ddd^{m_{i,j}}z/w)\psi_i^{+}(z)\psi_j^{-}(w)=g_{i,j}(\gamma\ddd^{m_{i,j}} z/w)\psi_j^{-}(w)\psi_i^{+}(z),\\
e_i(z)e_j(w)=g_{i,j}(\ddd^{m_{i,j}}z/w)e_j(w)e_i(z),\\
f_i(z)f_j(w)=g_{i,j}(\ddd^{m_{i,j}}z/w)^{-1}f_j(w)f_i(z),\\
(\qqq-\qqq^{-1})[e_i(z),f_j(w)]=\delta_{i,j}\left(\delta(\gamma w/z)\psi_i^+(\gamma^{\frac{1}{2}}w)-\delta(\gamma z/w)\psi_i^-(\gamma^\frac{1}{2}z)\right),\\
\psi_i^\pm(z)e_j(w)=g_{i,j}(\gamma^{\pm\frac{1}{2}}\ddd^{m_{i,j}}z/w)e_j(w)\psi_i^\pm(z),\\
\psi_i^\pm(z)f_j(w)=g_{i,j}(\gamma^{\mp\frac{1}{2}}\ddd^{m_{i,j}}z/w)^{-1}f_j(w)\psi_i^\pm(z),\\
\Sym_{z_1,z_2}[e_i(z_1),[e_i(z_2),e_{i\pm1}(w)]_\qqq]_{\qqq^{-1}}=0,\,[e_i(z),e_j(w)]=0\hbox{ for }j\not=i,i\pm1,\\
\Sym_{z_1,z_2}[f_i(z_1),[f_i(z_2),f_{i\pm1}(w)]_\qqq]_{\qqq^{-1}}=0,\,[f_i(z),f_j(w)]=0\hbox{ for }j\not=i,i\pm1,
\end{gather*}
In the above, $[a,b]_\qqq=ab-\qqq ba$ is the $\qqq$-commutator.
We will also need the elements $\{b_{i,n}\}_{i\in\ZZ/\ell\ZZ}^{n\not=0}$ defined by
\[\psi_{i}^\pm(z)=\psi_{i,0}^{\pm1}\exp\left( \pm(\qqq-\qqq^{-1})\sum_{n>1}b_{i,\pm n}z^{\mp n} \right)\]
Denote by $\Heis$ the subalgebra generated by $\{b_{i,n}\}\cup\{\gamma^{\pm\frac{1}{2}}\}$.

\subsubsection{Miki automorphism}
One feature making the quantum toroidal algebra deserving of its name is that it contains two copies of the quantum affine algebra $U_\qqq(\dot{\mathfrak{sl}}_\ell)$.
The first, often called the \textit{horizontal subalgebra}, is generated by the zero modes:
\[
\{e_{i,0},f_{i,0}, \psi_{i,0}^{\pm 1}\}_{i\in\ZZ/\ell\ZZ}
\]
The second copy, the \textit{vertical subalgebra}, is roughly-speaking generated by currents where $i\not=0$:
\[
\{e_{i}(z),f_i(z)\}^{i\in\ZZ/\ell\ZZ}_{i\not=0}\cup\left\{ \psi_{i}^\pm(z) \right\}_{i\in\ZZ/\ell\ZZ}
\]
We will not go into detail on these subalgebras as they will not be used in the sequel.
However, they motivate the following automorphism of Miki, which swaps the ``two loops'' of the torus.
 
Let $'\ddot{U}$ denote the subalgebra of $\UTor$ obtained by omitting $\qqq^{d_1}$, $\ddot{U}'$ denote the subalgebra obtained by omitting $\qqq^{d_2}$, and $'\ddot{U}'$ denote the subalgebra obtained by omitting both elements.
Next, let $\eta$ denote the $\CC(\qqq)$-linear antiautomorphism of $'\ddot{U}'$ defined by
\begin{equation}
\begin{gathered}
    \eta(\ddd)=\ddd^{-1}\\
\eta(e_{i,k})=e_{i,-k},\, \eta(f_{i,k})=f_{i,-k},\, \eta(h_{i,k})=-\gamma^kh_{i,-k},\\
\eta(\psi_{i,0})=\psi_{i,0}^{-1},\,\eta(\gamma^{\frac{1}{2}})=\gamma^{\frac{1}{2}}
\end{gathered}
\label{EtaDef}
\end{equation}
We have the following beautiful result of Miki:
\begin{thm}[\cite{Miki}, \cite{Miki2}]\label{MikiAut}
There is an isomorphism $\varpi:\,'\ddot{U}\rightarrow\ddot{U}'$ sending the horizontal subalgebra to the vertical subalgebra.
Moreover, when restricted to $'\ddot{U}'$, we have:
\[
\varpi^{-1}=\eta\varpi\eta
\] 
\end{thm}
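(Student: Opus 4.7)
The plan is to follow Miki's original strategy: construct $\varpi$ by explicit formulas on a distinguished generating set, verify the defining relations, and then derive the symmetry identity from a computation on generators. The horizontal subalgebra is generated by $\{e_{i,0}, f_{i,0}, \psi_{i,0}^{\pm 1}\}_{i\in\ZZ/\ell\ZZ}$ together with $\gamma^{\pm 1/2}$ and $\qqq^{d_2}$, while $\ddot{U}'$ admits a Drinfeld-type presentation via the currents $\{e_i(z), f_i(z), \psi_i^\pm(z)\}$ together with $\gamma^{\pm 1/2}$ and $\qqq^{d_1}$. I would first prescribe images of the horizontal generators in $\ddot{U}'$: for $i\neq 0$, send $e_{i,0}, f_{i,0}, \psi_{i,0}^{\pm 1}$ to the zero modes of the corresponding vertical currents; send $e_{0,0}$ and $f_{0,0}$ to iterated $\qqq$-commutators of the currents $f_j(z)$ (respectively $e_j(z)$) for $j=1,\ldots,\ell-1$ evaluated at suitable powers of $\ddd$, with the necessary central corrections; and send $\qqq^{d_2}$ to a grading operator playing the role of $\qqq^{d_1}$ after the swap.

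The heart of the argument is verifying that this prescription respects the defining relations of the horizontal subalgebra. The Cartan and $[e_{i,0}, f_{j,0}]$ relations reduce to expanding the iterated commutators and applying the $g_{i,j}$ exchange rules; the central contributions from $[e_i(z), f_i(w)]$ fall into line because of the way $\gamma$ enters the prescription. The main obstacle is the Serre relation for the affine node,
\[
\Sym_{z_1, z_2}[\varpi(e_{0,0}), [\varpi(e_{0,0}), \varpi(e_{\pm 1, 0})]_\qqq]_{\qqq^{-1}} = 0,
\]
which after substitution becomes an intricate identity among nested $\qqq$-commutators of $\ell - 1$ currents. This is established by a contour-integral (or generating function) manipulation exploiting the quadratic exchange relations, with the $+1$ and $-1$ cases requiring separate careful treatment.

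Granted the relations, I would show the image contains the vertical subalgebra by exhibiting the currents $e_i(z), f_i(z)$ as iterated commutators of the $\varpi$-images of horizontal generators, effectively running Miki's construction in reverse. Bijectivity then follows from a character argument: both $'\ddot{U}$ and $\ddot{U}'$ admit compatible gradings by the root lattice $Q$ and the loop grading, and $\varpi$ is graded and matches the character of each finite-dimensional graded piece.

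For the identity $\varpi^{-1} = \eta \varpi \eta$ on $'\ddot{U}'$, observe that $\eta$ being an antiautomorphism makes $\eta \varpi \eta$ an algebra automorphism, so equality reduces to checking on a generating set, say $\{e_{i,k}, f_{i,k}, b_{i,k}\}_{i \neq 0} \cup \{e_{0,0}, f_{0,0}\}$. The crucial feature is that $\eta$ inverts the vertical loop grading $k \mapsto -k$ while preserving the $\ZZ/\ell\ZZ$ node label, so conjugating $\varpi$ by $\eta$ reverses the loop direction in exactly the sense opposite to $\varpi$ itself, producing $\varpi^{-1}$ on generators. The subtlety is bookkeeping the powers of $\ddd$ and the signs from $\eta(\ddd) = \ddd^{-1}$ and $\eta(b_{i,k}) = -\gamma^k b_{i,-k}$; these conspire with the $\ddd$-shifts in Miki's explicit formulas to yield the stated symmetry cleanly.
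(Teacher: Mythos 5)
The paper does not prove this theorem; it is cited directly from Miki's work (\cite{Miki}, \cite{Miki2}) and used as a black box. There is therefore no proof in the paper to compare your proposal against.

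As a standalone outline, your proposal is a plausible high-level description of Miki's strategy, but it is a plan rather than a proof: the two hardest steps are explicitly deferred. The verification of the affine-node Serre relation after substitution is acknowledged to be ``an intricate identity among nested $\qqq$-commutators'' and is handled by appeal to an unspecified ``contour-integral manipulation,'' and the generator-by-generator check of $\varpi^{-1}=\eta\varpi\eta$ is summarized as the $\ddd$-powers and signs ``conspiring\ldots cleanly,'' without actually writing down $\varpi(e_{0,0})$, $\varpi(f_{0,0})$, applying $\eta$, and comparing with $\varpi^{-1}$ on those elements. Those are precisely the computations that make Miki's papers nontrivial, and without them the argument does not close. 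You also do not address the role that Miki's formulation of $\varpi$ as part of an $SL_2(\ZZ)$-type action on the torus plays in organizing the construction and guaranteeing bijectivity; a pure character-count argument for surjectivity would still require knowing that the image of the horizontal subalgebra actually lands onto (not merely into) the vertical one, which hinges on the explicit commutator formulas you elide. If your aim were to reprove Miki's theorem, you would need to carry those calculations through; for the purposes of this paper, citing \cite{Miki}, \cite{Miki2} as the author does is the appropriate move.
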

%\noindent We can using $\varpi$ to adjoin $\{\varpi^{-1}(b_k^v)\}$ to $\Uaffh$ and obtain the \textit{horizontal $\UAff$}, which we denote by $\UAffh$. 
%$\UAffh$ contains $\Heish:=\varpi^{-1}\Heis$, which we call the \textit{horizontal Heisenberg subalgebra}.

\subsection{Representations}
We will go over two representations of $\UTor$: the \textit{vertex} and \textit{Fock representations}.

\subsubsection{Heisenberg subalgebra}
Let $[n]_\qqq$ denote the quantum integer:
\[
[n]_\qqq:=\frac{\qqq^n-\qqq^{-n}}{\qqq-\qqq^{-1}}
\]
The subalgebra $\Heis$ satisfies the relation
\begin{equation}
[b_{i,n},b_{j,m}]=\delta_{m+n,0}\frac{\ddd^{-nm_{i,j}}[na_{i,j}]_{\qqq}(\gamma^n-\gamma^{-n})}{n(\qqq-\qqq^{-1})}
\label{HeisRel}
\end{equation}
%For $n>0$, we can invert the $\ell\times\ell$ matrix with $(i,j)$ entry given by
%\[
%\frac{\ddd^{-nm_{i,j}}[na_{i,j}]_\qqq}{n(\qqq-\qqq^{-1})}
%\]
%to define elements $b_{i,\pm n}^\perp$ satisfying
%%By comparing (\ref{HeisPair}) with (\ref{HeisRel}), we have that
%\[ [b_{i,n}^\perp,b_{j,m}]=[b_{j,-m},b_{i,-n}^\perp]=\delta_{i,j}\delta_{n+m,0}(\gamma^n-\gamma^{-n})\]
%\begin{rem}
%The $b_{i,n}^\perp$ defined in this manner is the \textit{negative} of the one used in \cite{WreathEigen}.
%This is because in \textit{loc. cit.}, we defined $b_{i,n}^\perp$ using a pairing, whereas here, we use the commutator.
%When comparing formulas involving $b_{i,n}^\perp$, we will need to insert a sign change. $\triangle$
%\end{rem}
%
Let
\begin{itemize}
\item $\Heis^+$ be the subalgebra generated by $\left\{ b_{i,n} \right\}_{i\in\ZZ/\ell\ZZ}^{n>0}$;
\item $\Heis^-$ be the subalgebra generated by $\left\{ b_{i,-n} \right\}_{i\in\ZZ/\ell\ZZ}^{n>0}$; 
\item and $\Heis^{\ge}$ be the subalgebra generated by $\left\{ b_{i,n} \right\}_{i\in\ZZ/\ell\ZZ}^{n>0}\cup\left\{ \gamma^{\pm\frac{1}{2}} \right\}$;
\end{itemize}
$\Heis^\ge$ has a one-dimensional representation $\FF\mathbb{1}$ where 
\begin{align*}
\gamma^{\pm\frac{1}{2}}\mathbb{1}&=\qqq^{\pm\frac{1}{2}}\mathbb{1}\\
b_{i,n}\mathbb{1}&=0
\end{align*} 
We then have
\[\mathrm{Ind}_{\Heis^\ge}^\Heis\FF\mathbb{1}\cong\CC[b_{i,-n}]_{i\in\ZZ/\ell\ZZ}^{n>0}\]
as $\Heis^-$-modules.
If we identify $\FF\otimes\Lambda^{\otimes\ell}$ with $\Heis^-$ by specifying
\begin{equation}
p_k(i)\mapsto \frac{k}{[k]_\qqq}b_{i,-k}
\label{PowertoBosons}
\end{equation}
then we also obtain an identification
\[\FF\otimes\Lambda^{\otimes\ell}\cong\mathrm{Ind}_{\Heis^\ge}^\Heis\FF\mathbb{1}\]

\subsubsection{Vertex representation}
We will now review what one can consider a homogeneous bosonic Fock space for $\UTor$.
For the $A_{\ell-1}$ root system, let $\{\alpha_j\}_{j=1}^{\ell-1}$ be the simple roots, let $\{h_j\}_{j=1}^{\ell-1}$ be the corresponding simple coroots, and let $\langle-,-\rangle$ denote the natural pairing between them. 
We set 
\begin{align*}
\alpha_0&:=-\sum_{j=1}^{\ell-1}\alpha_j,\\
h_0&:=-\sum_{j=1}^{\ell-1}h_j,
\end{align*} 
The \textit{twisted group algebra} $\FF\{Q\}$ is the $\FF$-algebra generated by $\{e^{\pm\alpha_j}\}_{j=1}^{\ell-1}$ satisfying the relations
\[e^{\alpha_i}e^{\alpha_j}=(-1)^{\langle h_i,\alpha_j\rangle}e^{\alpha_j}e^{\alpha_i}\]
For $\alpha\in Q$ with $\alpha=\sum_{j=1}^{\ell-1}m_j\alpha_j$, we set
\[e^{\alpha}=e^{m_1\alpha_1}\cdots e^{m_{\ell-1}\alpha_{\ell-1}}\]
We will identify $\FF\left[ Q \right]$ and $\FF\left\{ Q \right\}$ by the `identity map' $e^\alpha\mapsto e^\alpha$. 
Let
\[W:=\mathrm{Ind}_{\Heis^\ge}^\Heis\FF\mathbb{1}\otimes\FF\left\{ Q \right\}\]
Altogether, we have an identification
\begin{equation*}
\Lambda^{\otimes\ell}\otimes\FF[Q]\cong W
\label{VertexSym}
\end{equation*}

Finally, we will need to define some additional operators.
Let $v\otimes e^\alpha \in W$ where
\begin{align*}
v&=b_{i_1,-k_1}\cdots b_{i_N,-k_N} v_0\\
\alpha&=\sum_{j=1}^{\ell-1}m_j\alpha_j
\end{align*}
The operators $\partial_{\alpha_i}$, $z^{H_{i,0}}$, and $d$ are defined by
\begin{align*}
\partial_{\alpha_i}(v\otimes e^\alpha )&:=\langle h_i,\alpha\rangle v\otimes e^\alpha ,\\
z^{H_{i,0}}(v\otimes e^\alpha )&:=z^{\langle h_i,\alpha\rangle} \ddd^{\frac{1}{2}\sum_{j=1}^{\ell-1}\langle h_i,m_j\alpha_j\rangle m_{i,j}}v\otimes e^\alpha ,\\
d(v\otimes e^\alpha )&:=-\left(\dfrac{(\alpha,\alpha)}{2}+(\alpha,\Lambda_p)+\sum_{i=1}^Nk_i\right)v\otimes e^\alpha 
\end{align*}

The following theorem of Yoshihisa Saito defines the \textit{vertex representation}:

\begin{thm}[\cite{Saito}]\label{Vertexrep}
For any $\vec{\ccc}=(\ccc_0,\ldots, \ccc_{\ell-1})\in(\CC^\times)^\ell$, the following formulas endow $W$ with an action of $\ddot{U}'$
\begin{align*}
\rho_{\vec{\ccc}}(e_i(z))&=\ccc_i\exp\left(\sum_{k>0}\frac{\qqq^{-\frac{k}{2}}}{[k]_\qqq}b_{i,-k}z^k\right)\exp\left(-\sum_{k>0}\frac{\qqq^{-\frac{k}{2}}}{[k]_\qqq}b_{i,k}z^{-k}\right)e^{\alpha_i} z^{1+H_{i,0}},\\
\rho_{\vec{\ccc}}(f_i(z))&=\frac{(-1)^{\ell\delta_{i,0}}}{\ccc_i}\exp\left(-\sum_{k>0}\frac{\qqq^{\frac{k}{2}}}{[k]_\qqq}b_{i,-k}z^k\right)\exp\left(\sum_{k>0}\frac{\qqq^{\frac{k}{2}}}{[k]_\qqq}b_{i,k}z^{-k}\right)e^{-\alpha_i} z^{1-H_{i,0}},\\
\rho_{\vec{\ccc}}(\psi_i^\pm(z))&=\exp\left(\pm(\qqq-\qqq^{-1})\sum_{k>0}b_{i,\pm k}z^{\mp k}\right)\qqq^{\pm\partial_{\alpha_i}},\,\rho_{\vec{\ccc}}(\gamma^{\frac{1}{2}})=\qqq^\frac{1}{2},\,\rho_{\vec{\ccc}}(\qqq^{d_1})=\qqq^d
\end{align*}
The representation $W$ is irreducible.
\end{thm}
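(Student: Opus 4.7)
The plan is to verify each family of defining relations of $\UTor$ by direct computation on the vertex operators via standard OPE techniques, and then deduce irreducibility from the weight decomposition of $W$.

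The Heisenberg commutations \ref{HeisRel} and the central action of $\gamma^{\pm 1/2}$ are built into the construction of $W$ as $\mathrm{Ind}_{\Heis^\ge}^{\Heis}\FF\mathbb{1}\otimes\FF\{Q\}$. The ancillary operators $\partial_{\alpha_i}$, $z^{H_{i,0}}$, $\qqq^{d_1}$, $\qqq^{d_2}$ are designed so that the $\psi^\pm\psi^\pm$ commutations and the scaling relations $\qqq^{d_1}X(z)\qqq^{-d_1}=X(\qqq z)$ and $\qqq^{d_2}e_i(z)\qqq^{-d_2}=\qqq e_i(z)$ are immediate from degree counts in $z$ and in the Heisenberg grading. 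The substance of the theorem lies in the $ee$, $ff$, $\psi e$, $\psi f$, $ef$, and Serre relations.

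For the $ee$-relation, I would apply the identity $\exp(A)\exp(B)=\exp([A,B])\exp(B)\exp(A)$ (valid when $[A,B]$ is central) to move the annihilation exponential of $e_i(z)$ past the creation exponential of $e_j(w)$. Summing Heisenberg commutators via \ref{HeisRel} yields a scalar factor
\[
\exp\!\left(-\sum_{k>0}\frac{\ddd^{-km_{i,j}}[ka_{i,j}]_\qqq}{k[k]_\qqq}(w/z)^k\right),
\]
which, after summing the resulting geometric series and combining with the sign $(-1)^{\langle h_i,\alpha_j\rangle}$ coming from $e^{\alpha_i}e^{\alpha_j}$ in $\FF\{Q\}$ together with the monomial factors $z^{1+H_{i,0}}$, reproduces exactly $g_{i,j}(\ddd^{m_{i,j}}z/w)$. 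The $ff$, $\psi e$, and $\psi f$ relations are strictly parallel. For the $ef$-relation, $e_i(z)f_j(w)$ and $f_j(w)e_i(z)$ are each a scalar multiple of a common normal-ordered vertex operator; the scalars have simple poles only at $z/w=\qqq^{\pm 1}$, and their residues reassemble, once the $e^{\pm\alpha_i}$ factors recombine, into $\delta(\gamma w/z)\psi_i^{+}(\gamma^{1/2}w)$ and $\delta(\gamma z/w)\psi_i^{-}(\gamma^{1/2}z)$. The sign $(-1)^{\ell\delta_{i,0}}$ in $f_i(z)$ is precisely the $2$-cocycle correction that lets one handle $\alpha_0 = -\sum_{j>0}\alpha_j$ uniformly inside $\FF\{Q\}$.

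The main obstacle will be the quantum Serre relations. After bringing $[e_i(z_1),[e_i(z_2),e_{i\pm 1}(w)]_\qqq]_{\qqq^{-1}}$ to normal-ordered form, one extracts a rational prefactor in $z_1,z_2,w$, and the claim reduces to showing that this prefactor is antisymmetric in $z_1,z_2$ so that $\Sym_{z_1,z_2}$ annihilates it. The cancellation depends delicately on the $\qqq^{\pm k/2}$ shifts in the exponentials together with $\ddd^{m_{i,i\pm 1}}=\ddd^{\mp 1}$, and it is here that the second parameter $\ddd$ plays its essential role; the $f$-Serre relation is analogous. For irreducibility, given a nonzero subrepresentation $V\subseteq W$, I would pick a homogeneous $v\in V$ of lowest Heisenberg degree within a fixed $Q$-weight, using the grading provided by $\partial_{\alpha_i}$ and $\qqq^{d_2}$; since coefficients of $\psi_i^+(z)$ exhaust $\{b_{i,k}\}_{k>0}$, lowness forces $v\in\FF\mathbb{1}\otimes\FF\{Q\}$. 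Applying the zero modes $e_{i,0}, f_{i,0}$ then shifts the lattice factor to land on $\mathbb{1}\otimes 1\in V$, after which $\Heis^-$ acting on the vacuum sweeps out $\Lambda^{\otimes\ell}$ via (\ref{PowertoBosons}) while the zero-mode orbit of $e^0$ recovers all of $\FF\{Q\}$, giving $V=W$.
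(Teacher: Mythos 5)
The paper does not prove this theorem: it is cited verbatim from Saito \cite{Saito}, so there is no in-paper proof to compare against. Your sketch follows the standard vertex-operator/OPE route, which is indeed the right strategy: for each pair of currents, pull the annihilation exponential of the first past the creation exponential of the second, collect the resulting scalar, and combine it with the cocycle sign from $\FF\{Q\}$ and the power of the argument coming from $z^{1\pm H_{i,0}}$ (including its $\ddd$-twist) to reconstruct the $g_{i,j}$ prefactor; then check the Serre relations separately. Two points deserve attention. First, the contraction scalar you wrote is slightly off: each exponential carries a $\qqq^{\mp k/2}$, and on $W$ the central element $\gamma$ acts by $\qqq$, so (\ref{HeisRel}) contributes a factor $[k]_\qqq$ and the net commutator is $-\qqq^{-k}\ddd^{-km_{i,j}}[ka_{i,j}]_\qqq/(k[k]_\qqq)$, not $-\ddd^{-km_{i,j}}[ka_{i,j}]_\qqq/(k[k]_\qqq)$; without the $\qqq^{-k}$ the geometric series does not close up to $g_{i,j}(\ddd^{m_{i,j}}z/w)$, as one can already see at $i=j$.

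Second, your irreducibility argument has a genuine gap. You claim that applying $e_{i,0}$ or $f_{i,0}$ to $\mathbb{1}\otimes e^\alpha$ ``shifts the lattice factor to land on $\mathbb{1}\otimes 1$,'' but $\rho_{\vec{\ccc}}(e_{i,0})$ is the $z^0$-coefficient of a vertex operator, and applied to $\mathbb{1}\otimes e^\alpha$ it extracts the $z^{-1-\langle h_i,\alpha\rangle}$-coefficient of the creation exponential: this is a pure lattice vector only when $\langle h_i,\alpha\rangle=-1$, is zero when $\langle h_i,\alpha\rangle>-1$, and otherwise carries Heisenberg excitations, so the zero modes do not move you through $\FF\{Q\}$ as you describe. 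The fix is structural rather than computational. Since $\Heis\subset\ddot{U}'$ and $\psi_{i,0}$ supplies the $Q$-grading, a $\ddot{U}'$-submodule $V\subseteq W$ decomposes as $V=\bigoplus_{\alpha}(V\cap W_\alpha)$ with $W_\alpha=\mathrm{Ind}_{\Heis^\ge}^\Heis\FF\mathbb{1}\otimes e^\alpha$; each $W_\alpha$ is an irreducible $\Heis$-module, so $V=\bigoplus_{\alpha\in S}W_\alpha$ for some $S\subseteq Q$. The series $\rho_{\vec{\ccc}}(e_i(z))$ and $\rho_{\vec{\ccc}}(f_i(z))$ act nontrivially on each $W_\alpha$ (already visible on $\mathbb{1}\otimes e^\alpha$) and shift $Q$-weight by $\pm\alpha_i$, so a nonempty $S$ is stable under $\pm\alpha_i$ for all $i\in\ZZ/\ell\ZZ$, forcing $S=Q$ and $V=W$.
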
 

\subsubsection{Fock representations}
The fermionic Fock space for $\UTor$ is often simply called the \textit{Fock representation} without ambiguity.
We denote its underlying vector space by $\mathcal{F}$.
It has a basis $\{|\lambda\rangle\}$ indexed by partitions.
We denote by $\langle\lambda|$ the dual element to $|\lambda\rangle$. 
%Later on, we will also make use of the dual representation $\mathcal{F}^*(u)$.
%To preserve the bra-ket notation when discussing matrix elements for the dual representation, we will let
%\[\tensor[]{|\lambda\rangle}{^*}:=\langle\lambda|\hbox{ and }\tensor[^*]{\langle\lambda|}{}:=|\lambda\rangle\]

Let $\square=(a,b)$ be a node which may or may not be in a partition $\lambda$.
We will denote by:
\begin{enumerate}
\item $\chi_\square=q^{a-1}t^{b-1}$ the character of the node;
\item $c_\square$ its content and $\bar{c}_\square$ its content mod $\ell$;
% $a_\lambda(\square)=\lambda_b-a$ its arm length, $\ell_\lambda(\square)= {}^t\lambda_a-b$ its leg length, and $h_\lambda(\square)=a(\square)+\ell(\square)+1$ its hook length;
%\item $\square^+_\lambda=(a, {}^t\lambda_a)$ the node at the northern tip of its hook and $\square^-_\lambda=(\lambda_b, b)$ the node at the eastern tip of its hook;
%\item $R_\lambda(\square)$ and $C_\lambda(\square)$ the other nodes in $\lambda$ in the same row and column of $\square$, respectively;
\item $d_i(\lambda)$ the number of $i$-nodes in $\lambda$;
\item and $A_i(\lambda)$ and $R_i(\lambda)$ the addable and removable $i$-nodes of $\lambda$, respectively.
\end{enumerate}
Finally, we will abbreviate $a\equiv b\hbox{ mod }\ell$ by simply $a\equiv b$ and use the Kronecker delta function 
\[\delta_{a\equiv b}=\left\{\begin{array}{ll}
1 & \hbox{if }a\equiv b\\
0 & \hbox{otherwise}
\end{array}\right.\]
%Finally, we set
%\[\phi(z):=\frac{\qqq^{-1}z-\qqq}{z-1}\]
In the proposition below, we consider a partition $\lambda$ and its enlargement $\lambda+\square$ into another partition by a single node $\square$.

\begin{prop}[\cite{NagaoK},\cite{FJMMRep},\cite{WreathEigen}]\label{FockRep} 
Let $\upsilon\in\FF^\times$.
If we identify
\[
q=\qqq\ddd,\, t=\qqq\ddd^{-1}
\]
we can define a $'\ddot{U}$-action $\tau_{\upsilon}^-$ on $\mathcal{F}$ where the only nonzero matrix elements of the generators are
\begin{gather*}
\begin{aligned}
\langle\lambda | \tau_{\upsilon}^-(e_i(z))|\lambda+\square\rangle&=\delta_{c_\square\equiv i}(-\ddd)^{d_{i+1}(\lambda)}\delta\left( \frac{ z}{\chi_\square\upsilon}\right)
\frac{\displaystyle\prod_{\blacksquare\in R_{i}(\lambda)}\left( \chi_\square-\qqq^2\chi_\blacksquare \right)}
{\displaystyle\prod_{\substack{\blacksquare\in A_{i}(\lambda)\\\blacksquare\not=\square}}\left(\chi_\square-\chi_\blacksquare\right)}\\
\langle\lambda+\square |\tau^-_{\upsilon}(f_i(z))|\lambda\rangle&=\delta_{c_\square\equiv i}(-\ddd)^{-d_{i+1}(\lambda)}\delta\left(\frac{ z}{\chi_\square\upsilon}\right)
\frac{\displaystyle\prod_{\substack{\blacksquare\in A_{i}(\lambda)\\\blacksquare\not=\square}}\left( \qqq\chi_\square-\qqq^{-1}\chi_\blacksquare \right)}
{\displaystyle\prod_{\blacksquare\in R_{i}(\lambda)}\qqq\left( \chi_\square-\chi_\blacksquare \right)}\\
\langle\lambda|\tau^-_{\upsilon}(\psi_i^\pm(z))|\lambda\rangle&=
\prod_{\blacksquare\in A_{i}(\lambda)}\frac{\left(\qqq z-\qqq^{-1}\chi_{\blacksquare}\upsilon\right)}{\left( z-\chi_\blacksquare\upsilon\right)}
\prod_{\blacksquare\in R_{i}(\lambda)}\frac{\left(\qqq^{-1} z-\qqq\chi_\blacksquare\upsilon\right)}{\left( z-\chi_\blacksquare\upsilon\right)},
\end{aligned}\\
\langle\lambda|\tau_{\upsilon}^-(\gamma^{\frac{1}{2}})|\lambda\rangle=1,\,\langle\lambda|\tau_{\upsilon}^-(\qqq^{d_2})|\lambda\rangle=\qqq^{-|\lambda|}
\end{gather*}
On the other hand, if we identify
\[
q=\qqq^{-1}\ddd,\, t=\qqq^{-1}\ddd^{-1}
\]
then we can define another $'\ddot{U}$-action $\tau_{\upsilon}^+$ on $\mathcal{F}$ with matrix elements
\begin{gather*}
\begin{aligned}
\langle\lambda +\square| \tau_{\upsilon}^+(e_i(z))|\lambda\rangle&=\delta_{c_\square\equiv i}(-\ddd)^{-d_{i+1}(\lambda)}\delta\left(\frac{z}{\chi_\square \upsilon}\right)
\frac{\displaystyle\prod_{\substack{\blacksquare\in A_{i}(\lambda)\\\blacksquare\not=\square}}\left(\chi_\square-\qqq^2\chi_\blacksquare\right)}
{\displaystyle\prod_{\blacksquare\in R_{i}(\lambda)}\left( \chi_\square-\chi_\blacksquare \right)}\\
\langle\lambda |\tau_{\upsilon}^+(f_i(z))|\lambda+\square\rangle&=\delta_{c_\square\equiv i}(-\ddd)^{d_{i+1}(\lambda)}\delta\left(\frac{z}{\chi_\square \upsilon}\right)
\frac{\displaystyle\prod_{\blacksquare\in R_{i}(\lambda)}\left( \qqq\chi_\square-\qqq^{-1}\chi_\blacksquare \right)}
{\displaystyle\prod_{\substack{\blacksquare\in A_{i}(\lambda)\\\blacksquare\not=\square}}\qqq\left( \chi_\square-\chi_\blacksquare \right)}\\
\langle\lambda|\tau_{\upsilon}^+(\psi_i^\pm(z))|\lambda\rangle&=
\prod_{\blacksquare\in A_{i}(\lambda)}\frac{\left(\qqq^{-1}z-\qqq\chi_\blacksquare \upsilon\right)}{\left(z-\chi_\blacksquare \upsilon\right)}
\prod_{\blacksquare\in R_{i}(\lambda)}\frac{\left(\qqq z-\qqq^{-1}\chi_\blacksquare \upsilon\right)}{\left(z-\chi_\blacksquare \upsilon\right)},
\end{aligned}\\
\langle\lambda|\tau_{\upsilon}^+(\gamma^{\frac{1}{2}})|\lambda\rangle=1,\,\langle\lambda|\tau_{\upsilon}^+(\qqq^{d_2})|\lambda\rangle=\qqq^{|\lambda|}
\end{gather*}
Both representations are irreducible.
\end{prop}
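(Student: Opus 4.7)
The plan is to verify the defining relations of $'\ddot{U}$ directly on the given matrix elements and then check irreducibility. The formulas are essentially those of our prior work \cite{WreathEigen} (which in turn follow \cite{NagaoK} and \cite{FJMMRep}), and the two actions $\tau^{\pm}_\upsilon$ are interchanged by a symmetry of the defining relations that reverses the role of $e_i$ and $f_i$, inverts $\qqq$, and accordingly swaps $\qqq^{d_2}$ with $\qqq^{-d_2}$. So I would focus on $\tau^-_\upsilon$ and then transport to $\tau^+_\upsilon$.

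First, since the $\psi_i^\pm(z)$ act diagonally in the basis $\{|\lambda\rangle\}$ by a common rational function in $z$ (expanded at $z=\infty$ and $z=0$ respectively), the $\psi\psi$ relation and centrality of $\gamma^{\pm\frac{1}{2}}$ are immediate. The $\psi e$ and $\psi f$ exchange relations are verified on matrix elements by localizing $e_i(z)$ at $z=\chi_\square\upsilon$ via its $\delta$-factor and then comparing the ratio of $\psi_j^\pm$-eigenvalues on $|\lambda\rangle$ and $|\lambda+\square\rangle$ against $g_{i,j}(\gamma^{\pm\frac{1}{2}}\ddd^{m_{i,j}}z/w)$ specialized at $z=\chi_\square\upsilon$; this ratio telescopes because the sets $A_i$ and $R_i$ change only at neighbors of $\square$ along content lines. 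The $ee$ and $ff$ exchange relations follow from analogous localization arguments applied to the two-step transitions $|\lambda\rangle \leftrightarrow |\lambda+\square_1+\square_2\rangle$. The commutator relation $[e_i(z),f_j(w)]$ is the delicate step: for $i=j$ one extracts residues from $\delta(\gamma w/z)$ and $\delta(\gamma z/w)$, sums over $\square\in A_i(\lambda)\cup R_i(\lambda)$, and recognizes the resulting sum as the partial-fraction expansion of the $\psi_i^\pm$ eigenvalue, for which the sign factor $(-\ddd)^{\pm d_{i+1}(\lambda)}$ is precisely what makes the cancellation work. The Serre relations are handled by symmetrizing in $z_1, z_2$, extracting residues, and again reducing to node-neighbor arithmetic that only involves $\square$ and its immediate $i$- and $(i{\pm}1)$-colored neighbors.

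Irreducibility follows from a cyclic-vector argument. For $\tau^-_\upsilon$, successive applications of $f_i(z)$ at appropriate specializations of $z$ build any $|\lambda\rangle$ from $|\varnothing\rangle$, and $e_i(z)$ strips boxes back down, so any nonzero submodule must contain $|\varnothing\rangle$ and hence all of $\mathcal{F}$; the argument for $\tau^+_\upsilon$ is symmetric after swapping the roles of $e_i$ and $f_i$. The hardest step will be the $[e,f]$ relation: the $\delta$-function residues, the prefactor $(-\ddd)^{\pm d_{i+1}(\lambda)}$, and the partial-fraction identity for the $\psi_i^\pm$ eigenvalue must align perfectly, and any sign or $\qqq$-power error is fatal. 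A secondary subtle point is the simultaneous compatibility of the sign conventions with the two parameterizations $q=\qqq^{\pm 1}\ddd$, $t=\qqq^{\pm 1}\ddd^{-1}$, which must be checked rather than invoked, since the swap $\qqq\mapsto\qqq^{-1}$ interacts nontrivially with the exponents $d_{i+1}(\lambda)$ and the factors of $\qqq$ in both numerator and denominator of the matrix elements.
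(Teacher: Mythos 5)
This proposition is stated in the paper as a citation to prior work (\cite{NagaoK}, \cite{FJMMRep}, \cite{WreathEigen}); the paper itself contains no proof of it, so there is no ``paper's own proof'' to compare against. Your plan is the standard direct-verification route that the cited references follow: check each defining relation on matrix elements, with the $[e,f]$ relation handled by residues of the $\delta$-factors against the partial-fraction expansion of the $\psi$-eigenvalue, and irreducibility via a cyclic-vector argument. That outline is sound and does not conflict with anything in the paper.

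Two points worth making explicit rather than leaving implicit. First, both $\tau^\pm_\upsilon$ send $\gamma^{1/2}\mapsto 1$, so the central element $\gamma$ acts as the identity (the Fock module is ``vertically'' level zero, unlike the vertex module $\rho_{\vec{\ccc}}$ where $\gamma^{1/2}\mapsto \qqq^{1/2}$). This is exactly why the $\psi\psi$ relation collapses to the trivial commutativity you invoke --- without $\gamma=1$ the cleared-denominator form of that relation would not reduce to a tautology --- and it also collapses $\delta(\gamma w/z)$, $\delta(\gamma z/w)$ in the $[e,f]$ relation to $\delta(w/z)$, which is what makes your residue computation tractable. You should state this specialization up front rather than treat the $\psi\psi$ relation as trivially immediate. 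Second, the proposed symmetry carrying $\tau^-_\upsilon$ to $\tau^+_\upsilon$ is not a literal $e\leftrightarrow f$, $\qqq\mapsto\qqq^{-1}$ swap on the formulas: after $\qqq\mapsto\qqq^{-1}$, the factors $\qqq\chi_\square-\qqq^{-1}\chi_\blacksquare$ differ from $\chi_\square-\qqq^2\chi_\blacksquare$ by global $\qqq$-powers depending on $|A_i|$ and $|R_i|$, and the prefactor exponent on $(-\ddd)$ also flips sign. So if you want to deduce $\tau^+$ from $\tau^-$ you must track an overall rescaling of the basis (or equivalently compose with a $\qqq$-power of the grading operator $\qqq^{d_2}$); alternatively, verify both sets of relations independently. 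You flag this yourself as a ``secondary subtle point,'' which is the right instinct, but in a write-up it would need to be carried out rather than flagged.
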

\noindent We call $\tau_\upsilon^-$ the \textit{highest weight} Fock representation and $\tau_\upsilon^+$ the \textit{lowest weight} Fock representation.

\begin{rem}
We have presented $\tau_\upsilon^\pm$ in a way that hides its fermionic nature.
Following ideas from \cite{NagaoK}, one can show that the both representations are isomorphic to representations on the $\qqq$-deformed Fock space of \cite{KMS}.
Our basis here corresponds to a `spinon' basis rather than to $\qqq$-deformed semiinfinite wedges (cf. \cite{JKKMP}).
\end{rem}

\subsubsection{Tsymbaliuk isomorphisms}
The relationship between the vertex and Fock representations was established by Tsymbaliuk \cite{Tsym} (cf. 3.5.2 of \cite{WreathEigen}):
\begin{thm}\label{Eigenstates}
The vacuum-to-vacuum map
\[\mathbb{1}\otimes1 \mapsto |\varnothing\rangle\]
induces an isomorphism $\mathrm{T}_-:W\rightarrow\mathcal{F}$ between the $'\ddot{U}$-modules $\rho_{\vec{\ccc}}\circ\varpi$ and $\tau_{\upsilon}^-$, where the parameters $\vec{c}$ and $\upsilon$ are related by
\[\upsilon=(-1)^{\frac{(\ell-1)(\ell-2)}{2}}\frac{\qqq\ddd^{-\frac{\ell}{2}}}{\ccc_0\cdots \ccc_{\ell-1}}\]
It also induces an isomorphism $\mathrm{T}_+:W\rightarrow\mathcal{F}$ between the $'\ddot{U}'$-modules $\rho_{\vec{\ccc}}\circ\varpi^{-1}$ and $\tau_{\upsilon}^+$, where the parameters are instead related by
\[
\upsilon=(-1)^{\frac{(\ell-1)(\ell-2)}{2}}\qqq^{-1}\ddd^{-\frac{\ell}{2}}\ccc_0\cdots\ccc_{\ell-1}
\]
\end{thm}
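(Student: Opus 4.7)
The plan is to realize both $W$ (with the $'\ddot{U}$-action $\rho_{\vec{\ccc}}\circ\varpi$) and $\mathcal{F}$ (with $\tau_\upsilon^-$) as irreducible highest-weight $'\ddot{U}$-modules with matching highest-weight data, and then appeal to Schur's lemma. By Theorem \ref{Vertexrep}, $W$ is irreducible as a $\ddot{U}'$-module, and since $\varpi$ is an algebra isomorphism, it remains irreducible as a $'\ddot{U}$-module via $\rho_{\vec{\ccc}}\circ\varpi$; Proposition \ref{FockRep} asserts the irreducibility of $\mathcal{F}$ under $\tau_\upsilon^-$. It is therefore enough to exhibit a nonzero $'\ddot{U}$-intertwiner sending the cyclic vector $\mathbb{1}\otimes 1$ to $|\varnothing\rangle$; any such intertwiner is an isomorphism by Schur, and the parameter relation between $\vec{\ccc}$ and $\upsilon$ will be forced by the matching.

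The heart of the argument is the identification of vacuum data. On the Fock side, the formulas of Proposition \ref{FockRep} imply that $|\varnothing\rangle$ is annihilated by every $\tau_\upsilon^-(e_{i,n})$ (no removable nodes exist), is a joint eigenvector for the $\psi_{i,0}^{\pm 1}$, $\gamma^{\pm\frac{1}{2}}$, and $\qqq^{d_2}$ with explicit eigenvalues, and generates $\mathcal{F}$ cyclically via successive $f_i(z)$-actions. On the vertex side, one must check that $\mathbb{1}\otimes 1$ is cyclic under $\rho_{\vec{\ccc}}\circ\varpi$ (which follows from $\ddot{U}'$-cyclicity of the vacuum under $\rho_{\vec{\ccc}}$ together with the fact that $\varpi$ is an isomorphism of algebras), is annihilated by $(\rho_{\vec{\ccc}}\circ\varpi)(e_{i,n})$, and carries the same Cartan eigenvalues. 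Matching the $\qqq^{d_2}$ eigenvalue and the level $\gamma^{\frac{1}{2}}\mapsto \qqq^{\frac{1}{2}}$ produces the scalar relation
\[
\upsilon=(-1)^{\frac{(\ell-1)(\ell-2)}{2}}\frac{\qqq\ddd^{-\frac{\ell}{2}}}{\ccc_0\cdots \ccc_{\ell-1}}.
\]

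The main obstacle is computing the image under $\varpi$ of the currents $e_i(z)$, $f_i(z)$, and $\psi_i^\pm(z)$, since $\varpi$ is characterized only abstractly by swapping horizontal and vertical subalgebras and has no easily manipulated closed form on the full current data. I would circumvent explicit formulas by exploiting the involutive relation $\varpi^{-1}=\eta\varpi\eta$ on $'\ddot{U}'$ from Theorem \ref{MikiAut}: this lets one transfer questions about vertical generators acting on the vacuum into questions about horizontal generators, whose action on $\mathbb{1}\otimes 1$ is directly read off from the vertex-operator formulas in Theorem \ref{Vertexrep}. Only enough Cartan-type eigenvalues to rigidify $\upsilon$ need to be computed; the remaining verifications (annihilation by the $e_{i,n}$ and cyclicity) are formal consequences. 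The sign $(-1)^{\frac{(\ell-1)(\ell-2)}{2}}$ arises from the cocycle defining $\FF\{Q\}$, and this bookkeeping is where most of the care is required.

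For the second isomorphism $\mathrm{T}_+$, I would apply the antiautomorphism $\eta$ from (\ref{EtaDef}) to both sides. Inspection of the matrix elements in Proposition \ref{FockRep} together with the $q,t$ identification shows that $\tau_\upsilon^+$ is (up to relabeling $\upsilon$) the $\eta$-twist of $\tau_\upsilon^-$, while the Miki relation swaps $\varpi$ with $\varpi^{-1}$. This exchanges the roles of $\ccc_0\cdots \ccc_{\ell-1}$ in the numerator and denominator of the scalar, yielding the stated formula for the lowest-weight case.
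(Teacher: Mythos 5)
The paper does not prove this theorem; it is cited from \cite{Tsym}, so there is no internal proof to compare against. Your overall strategy---exhibit both sides as irreducible highest-weight (respectively lowest-weight) modules over the relevant subalgebra, match the highest-weight data, and invoke Schur's lemma---is the right one and is in spirit what Tsymbaliuk does. The identification of vacuum data is indeed where all the content lives.

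That said, the claim that ``annihilation by the $e_{i,n}$\ldots\ are formal consequences'' is where the argument has a genuine gap. You must show $(\rho_{\vec{\ccc}}\circ\varpi)(e_{i,n})(\mathbb{1}\otimes 1)=0$ for all $i\in\ZZ/\ell\ZZ$ and all $n\in\ZZ$, and the proposed transfer via $\varpi^{-1}=\eta\varpi\eta$ only reduces this to horizontal generators when $i\neq 0$: for $i=0$, $n\neq 0$, the element $e_{0,n}$ lies in neither the vertical nor the horizontal copy of $U_\qqq(\dot{\mathfrak{sl}}_\ell)$, so $\varpi^{-1}(e_{0,n})$ is not a polynomial in zero modes. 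The clean way out is a grading argument---$\varpi$ exchanges the two degree operators $\qqq^{d_1}$ and $\qqq^{d_2}$ up to central corrections, and the $d$-grading on $W$ is bounded above with $\mathbb{1}\otimes 1$ in top degree---but identifying $\varpi(\qqq^{d_2})$ is itself a nontrivial input that nothing you have cited supplies for free. The same goes for the exact form of $\upsilon$, including the cocycle sign $(-1)^{(\ell-1)(\ell-2)/2}$ and the $\ddd^{-\ell/2}$ factor: these require actually matching the $\psi_i^\pm(z)$ eigenvalue series of the two vacua, a concrete computation you have not carried out.

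Separately, the $\eta$-twist derivation of $\mathrm{T}_+$ from $\mathrm{T}_-$ does not go through as stated. The two isomorphisms are over different subalgebras ($'\ddot{U}$ versus $'\ddot{U}'$), while $\varpi^{-1}=\eta\varpi\eta$ is an identity on $'\ddot{U}'$ alone. Moreover, $\eta$ is $\CC(\qqq)$-linear, fixing $\qqq$ while inverting $\ddd$, whereas comparing the parameter matchings $(q,t)=(\qqq\ddd,\qqq\ddd^{-1})$ for $\tau_\upsilon^-$ and $(q,t)=(\qqq^{-1}\ddd,\qqq^{-1}\ddd^{-1})$ for $\tau_\upsilon^+$ shows the two Fock modules differ by inverting $\qqq$, not $\ddd$. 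Also, $\eta$ sends $e\mapsto e$ and $f\mapsto f$, yet in $\tau_\upsilon^\pm$ the roles of $e$ and $f$ as box-adding versus box-removing operators are exchanged---again not something $\eta$ accomplishes. The correct route for $\mathrm{T}_+$ is to rerun the lowest-weight version of the Schur argument from scratch: verify $(\rho_{\vec{\ccc}}\circ\varpi^{-1})(f_{i,n})$ annihilates $\mathbb{1}\otimes 1$ and match the lowest-weight data.
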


We note that while both isomorphisms are induced from the vacuum-to-vacuum map, they are propagated to the rest of $W$ using different actions of the quantum toroidal algebra.
%This isomorphism only matches the vectors $\mathbb{1}$ and $\tensor[]{|\varnothing\rangle}{^*}$ and is thus non-constructive.
Our previous result \cite{WreathEigen} adds more detail to $\mathrm{T}_-$.
Recall that we have identfied $W$ and $\Lambda_{q,t}^{\otimes\ell}\otimes\FF[Q]$ using (\ref{PowertoBosons}), and thus we can view the wreath Macdonald polynomials $H_\lambda\otimes e^{\core(\lambda)}$ as elements of $W$.
\begin{thm}\label{WreathEigen}
The Tsymabliuk isomorphisms $\mathrm{T}_\pm$ satisfy the following:
\begin{enumerate}
\item Under the identification (\ref{PowertoBosons}) and the matching of parameters
\begin{equation}
q=\qqq\ddd,\, t=\qqq\ddd^{-1}
\label{PlusPar}
\end{equation}
the Tsymbaliuk isomorphism $\mathrm{T}_-$ sends $\FF (H_\lambda\otimes e^{\core(\lambda)})$ to $\FF|\lambda\rangle$.
\item On the other hand, under the identification (\ref{PowertoBosons}) and the matching of parameters
\[
q=\qqq^{-1}\ddd,\, t=\qqq^{-1}\ddd^{-1}
\]
the Tsymbaliuk isomorphism $\mathrm{T}_+$ sends $\FF (H_\lambda\left[ -X^\bullet \right]\otimes e^{\core(\lambda)})$ to $\FF|\lambda\rangle$.
\end{enumerate}
\end{thm}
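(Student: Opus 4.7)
Part (1) is proved in \cite{WreathEigen} as the principal eigenstate theorem and we take it as given. For part (2), the plan is to reduce it to part (1) by exploiting Miki's duality $\varpi^{-1}=\eta\varpi\eta$ from Theorem \ref{MikiAut}, together with a natural involution on $W$ that realizes the plethystic substitution $X^\bullet\mapsto-X^\bullet$.

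Concretely, I would first define the $\FF$-linear involution $\Phi:W\to W$ which multiplies each basis vector $b_{i_1,-k_1}\cdots b_{i_N,-k_N}\mathbb{1}\otimes e^\alpha$ by $(-1)^N$; under the identification (\ref{PowertoBosons}) this is exactly the substitution $X^\bullet\mapsto-X^\bullet$ on $\Lambda^{\otimes\ell}\otimes\FF[Q]$, so $\Phi(H_\lambda\otimes e^{\core(\lambda)})=H_\lambda[-X^\bullet]\otimes e^{\core(\lambda)}$. It therefore suffices to show that $\mathrm{T}_+\circ\Phi=\mathrm{T}_-$ up to a scalar on each charge sector. The core calculation here is to combine the explicit vertex operator formulas of Theorem \ref{Vertexrep} with $\varpi^{-1}=\eta\varpi\eta$ and check that conjugation by $\Phi$ carries $\rho_{\vec{\ccc}}\circ\varpi$ at parameter $\qqq$ to $\rho_{\vec{\ccc}'}\circ\varpi^{-1}$ at parameter $\qqq^{-1}$: the bosonic exponentials in $\rho_{\vec{\ccc}}(e_i(z))$, $\rho_{\vec{\ccc}}(f_i(z))$, and $\rho_{\vec{\ccc}}(\psi_i^\pm(z))$ all have their linear exponents negated by $\Phi$, and these sign flips are precisely absorbed by the reflection $\qqq^{\pm k/2}\mapsto\qqq^{\mp k/2}$, while the lattice translations $e^{\pm\alpha_i}$ and the operators $z^{1\pm H_{i,0}}$ are $\Phi$-invariant. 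The rescaling $\vec{\ccc}\mapsto\vec{\ccc}'$ is then chosen to match the parameter identifications of Theorem \ref{Eigenstates} under $\qqq\mapsto\qqq^{-1}$, and part (1) transports to part (2) directly.

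The main obstacle is that $\eta$ is an anti-automorphism and thus reverses the order of products of generators, whereas $\Phi$ is an ordinary automorphism of $W$; reconciling these requires careful bookkeeping through the generating series of the currents, whose coefficients do not commute. A secondary issue is tracking the sign $(-1)^{\ell\delta_{i,0}}$ in $\rho_{\vec{\ccc}}(f_i(z))$ and the precise form of $\vec{\ccc}'$ needed to align the central charges $\upsilon$ appearing in the two parts of Theorem \ref{Eigenstates}. A conceptually cleaner alternative, which I would fall back on if the direct vertex operator computation proves unwieldy, would be to bypass the explicit intertwining check by characterizing $|\lambda\rangle\in\mathcal{F}$ under $\tau_\upsilon^+$ as a joint eigenvector for the appropriate commuting family pulled back through $\mathrm{T}_+$, and then verifying directly that $\mathrm{T}_+^{-1}|\lambda\rangle$ satisfies triangularity conditions reminiscent of Definition \ref{WreathDef} which uniquely determine $H_\lambda[-X^\bullet]\otimes e^{\core(\lambda)}$.
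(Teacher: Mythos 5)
Your reduction of part (2) to part (1) via the sign involution $\Phi$ contains a genuine gap: the core claim, that $\Phi$-conjugation together with $\qqq\mapsto\qqq^{-1}$ carries $\rho_{\vec{\ccc}}\circ\varpi$ to $\rho_{\vec{\ccc}'}\circ\varpi^{-1}$, is not correct as stated. Conjugating $\rho_{\vec{\ccc}}(e_i(z))$ by $\Phi$ negates both bosonic exponents (since $\Phi b_{i,\pm k}\Phi^{-1}=-b_{i,\pm k}$), producing
\[
\ccc_i\exp\left(-\sum_{k>0}\frac{\qqq^{-\frac{k}{2}}}{[k]_\qqq}b_{i,-k}z^k\right)\exp\left(\sum_{k>0}\frac{\qqq^{-\frac{k}{2}}}{[k]_\qqq}b_{i,k}z^{-k}\right)e^{\alpha_i}z^{1+H_{i,0}}.
\]
Using $[k]_{\qqq^{-1}}=[k]_\qqq$, this has the bosonic content of $\rho_{\vec{\ccc}}(f_i(z))$ after $\qqq\to\qqq^{-1}$, but it still carries the lattice factor $e^{\alpha_i}z^{1+H_{i,0}}$ of $e_i$, whereas $f_i$ carries $e^{-\alpha_i}z^{1-H_{i,0}}$. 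So $\Phi$-conjugation plus $\qqq$-inversion matches neither $\rho_{\vec{\ccc}'}(e_i)$ nor $\rho_{\vec{\ccc}'}(f_i)$ at the new parameter; the sign flips are not absorbed. Patching this with an additional involution $e^\alpha\mapsto e^{-\alpha}$ is not an option, since it would send $\core(\lambda)$ to $-\core(\lambda)$ in the conclusion. Beyond this, the explicit formulas in Theorem \ref{Vertexrep} describe $\rho_{\vec{\ccc}}$ on the currents $e_i(z),f_i(z),\psi_i^\pm(z)$, while the modules actually involved are $\rho_{\vec{\ccc}}\circ\varpi^{\pm 1}$. Miki's automorphism $\varpi$ has no comparably explicit expression on the currents, so the direct operator-level comparison you propose cannot be carried out; the ``main obstacle'' you flag (anti-automorphism versus automorphism) is symptomatic of this deeper issue, not a bookkeeping nuisance.

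Your fallback is essentially what the paper does. Part (2) is proved by repeating the eigenstate argument of \cite{WreathEigen} with $\tau_\upsilon^+$ and $\varpi$ in place of $\tau_\upsilon^-$ and $\varpi^{-1}$: Proposition \ref{PosEnHn} expresses the $\varpi$-images of the transformed $e_n$ and $h_n$ multiplication operators as shuffle elements $\Psi_+(H_{p,n})$ and $\Psi_+(E_{p,n})$, and Proposition \ref{Adjacency} supplies the adjacency constraints that make their action on $\tau_\upsilon^+$ triangular. This is exactly the input needed to recognize $\mathrm{T}_+^{-1}|\lambda\rangle$ as a scalar multiple of $H_\lambda[-X^\bullet]\otimes e^{\core(\lambda)}$ through conditions analogous to Definition \ref{WreathDef}. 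If you develop the fallback rather than the $\Phi$-conjugation route, you will be on the right track.
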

Part (1) was proved in \cite{WreathEigen}.
The proof of part (2) is similar once we have established Propositions \ref{PosEnHn} and \ref{Adjacency} below.
We have emphasized that the matching of toroidal parameters $(\qqq,\ddd)$ and symmetric function parameters $(q,t)$ differ between the two isomorphisms $\mathrm{T}_\pm$.
%When we adhere to the matching (\ref{PlusPar}) for $\tau_{\upsilon}^+$, then Theorem \ref{WreathEigenDual} states:
%\begin{cor}
%Under the identification (\ref{PowertoBosons}) the matching of parameters
%\[q=\qqq\ddd,\, t=\qqq\ddd^{-1}\]
%the Tsymbaliuk isomorphism $\mathrm{T}_-$ sends $\FF H_\lambda^*$ to $\FF|\lambda\rangle$.
%\end{cor}
We will take the approach of adhering to the different parameter matchings when making computations and then reverting to (\ref{PlusPar}) when stating final results about wreath Macdonald polynomials.
This amounts to performing the swap $q\leftrightarrow t^{-1}$.
Doing this makes Theorem \ref{WreathEigen}(2) really a statement about $H_\lambda^*$.
We highlight here that in $W$,
\begin{itemize}
\item the basis $\left\{ H_\lambda\otimes e^{\core(\lambda)} \right\}$ diagonalizes $\varpi(\Heis)$;
\item the basis $\left\{ H_\lambda^*\otimes e^{\core(\lambda)} \right\}$ diagonalizes $\varpi^{-1}(\Heis)$.
\end{itemize} 

\subsection{Shuffle algebra}
%Utilizing the Tsymbaliuk isomorphisms, we can study the actions of $\hat{h}_{\mp n}^\pm(i)$ and $\hat{e}_{\mp n}^\pm(i)$ on $W$ by studying instead the actions of 
%\begin{align*}
%\tilde{h}_n^\pm(i)&:=\varpi^{-1}\hat{h}_{\mp n}^\pm(i)\\
%\tilde{e}_n^\pm(i)&:=\varpi^{-1}\hat{e}_{\mp n}^\pm(i)
%\end{align*}
%on $\mathcal{F}$ and $\mathcal{F}^*$.
Utilizing the Tsymbaliuk isomorphisms, we can transfer the study of $W$ to that of $\mathcal{F}$ after applying $\varpi^{\pm 1}$.
However, studying the images of elements of $\UTor$ under Miki's automorphism in terms of generators and relations is extremely difficult.
To access these images and their actions, we will utilize the \textit{shuffle algebra}. 

\subsubsection{Definition}
Let $\vec{k}=(k_0,\ldots, k_{\ell-1})\in(\ZZ_{\ge 0})^{\ZZ/\ell\ZZ}$.
The space of rational functions
\[\FF(x_{i,r})_{i\in\ZZ/\ell\ZZ,1\le r\le k_i}\]
carries an action of the group
\[\Sigma_{\vec{k}}:=\prod_i\Sigma_{k_i}\]
wherein the factor $\Sigma_{k_i}$ only permutes the variables $\{x_{i,r}\}_{1\le r\le k_i}$.
For the variable $x_{i,r}$, we call $i$ its \textit{color}, so $\Sigma_{\vec{k}}$ acts via \textit{color-preserving} permuations.
We will be concerned with the space of \textit{color-symmetric} rational functions:
\begin{align*}
\mathbb{S}_{\vec{k}}&:=\FF(x_{i,r})_{i\in\ZZ/\ell\ZZ,1\le r\le k_i}^{\Sigma_{\vec{k}}},&\mathbb{S}&:=\bigoplus_{\vec{k}\in(\ZZ_{\ge0})^{\ZZ/\ell\ZZ}}\mathbb{S}_{\vec{k}}
\end{align*}
Finally, we will need the notation
\begin{align*}
|\vec{k}|&:=\sum_{i\in\ZZ/\ell\ZZ}k_i\\
\vec{k}!&:=\prod_{i\in\ZZ/\ell\ZZ}k_i!
\end{align*}

On $\mathbb{S}$, we can define the \textit{shuffle product} $\star$.
First, we need the \textit{mixing terms}:
\[\omega_{i,j}(z,w):=\left\{\begin{array}{ll}
\left(z-\qqq^{2}w\right)^{-1}\left(z-w\right)^{-1} & \hbox{if }i=j\\
\left(\qqq w-\ddd^{-1}z\right) &\hbox{if }i+1=j\\
\left(z-\qqq\ddd^{-1} w\right) &\hbox{if }i-1=j\\
1 &\hbox{otherwise}
\end{array}\right.\]
For $F\in\mathbb{S}_{\vec{n}}$ and $G\in\mathbb{S}_{\vec{m}}$, $F\star G\in\mathbb{S}_{\vec{n}+\vec{m}}$ is given by
\[\frac{1}{\vec{n}!\vec{m}!}\Sym\left(F(\{x_{i,r}\}_{r\le n_i})G(\{x_{j,s}\}_{n_j<s})\prod_{i,j\in\ZZ/\ell\ZZ}\prod_{\substack{r\le n_i\\ n_j<s}}\omega_{i,j}(x_{i,r}/x_{j,s})\right)\]
where $\Sym$ denotes the \textit{color symmetrization}: for $f\in\FF(\{x_{i,1},\ldots,x_{i,k_i}\}_{i\in\ZZ/\ell\ZZ})$,
\[\Sym(f):=\sum_{(\sigma_0,\ldots,\sigma_{\ell-1})\in\prod\Sigma_{\vec{k}}}f(\{x_{i,\sigma_i(r)}\})\]

Let $\Sss_{\vec{k}}\subset\mathbb{S}_{\vec{k}}$ be the subspace of functions $F$ satisfying:
\begin{enumerate}
\item \textit{Pole conditions:} $F$ is of the form
\begin{equation*}
F=\frac{f(\{x_{i,r}\})}{\prod_{i\in\ZZ/\ell\ZZ}\prod_{r\not= r'}(x_{i,r}-\qqq^2x_{i,r'})}
\end{equation*}
for a color-symmetric Laurent polynomial $f$.
\item \textit{Wheel conditions:} $F$ has a well-defined finite limit when
\[\frac{x_{i,r_1}}{x_{i+\epsilon,s}}\rightarrow\qqq\ddd^\epsilon\hbox{ and }\frac{x_{i+\epsilon,s}}{x_{i,r_2}}\rightarrow\qqq\ddd^{-\epsilon}\]
for any choice of $i$, $r_1$, $r_2$, $s$, and $\epsilon$, where $\epsilon\in\{\pm 1\}$. 
This is equivalent to specifying that the Laurent polynomial $f$ in the pole conditions evaluates to zero.
\end{enumerate}
We set
\[\Sss:=\bigoplus_{\vec{k}\in(\ZZ_{\ge0})^{\ZZ/\ell\ZZ}}\Sss_{\vec{k}}\]
\begin{prop}
The product $\star$ is associative and $\Sss$ is closed under it.
\end{prop}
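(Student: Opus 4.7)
The plan is to address associativity first and then tackle the two closure conditions separately, with the wheel closure being the most delicate step.

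For associativity, the key observation is that since $F \in \mathbb{S}_{\vec{n}}$ and $G \in \mathbb{S}_{\vec{m}}$ are already color-symmetric in their own variables, the normalization $1/(\vec{n}!\,\vec{m}!)$ reduces the $\Sym$ over $\Sigma_{\vec{n}+\vec{m}}$ in the definition of $F\star G$ to a sum over ordered color-bipartitions of the combined variable set. Iterating this observation, both $(F\star G)\star H$ and $F\star(G\star H)$ reduce to the same sum, indexed by ordered color-tripartitions into blocks of sizes $\vec{n},\vec{m},\vec{p}$, with each term weighted by $F\cdot G\cdot H$ evaluated on the respective blocks and by $\prod \omega_{i,j}(x/y)$ ranging over all pairs $(x,y)$ of variables lying in distinct blocks with $x$ in an earlier block than $y$. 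The cross-block factor depends only on the tripartition, not on the order in which the blocks were aggregated, so the two iterated products agree.

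For the pole condition, writing $F=f/D_F$ and $G=g/D_G$ in the required reduced form, the denominator of $F\star G$ receives contributions from $D_F$, $D_G$, and the same-color cross-block factors $\omega_{i,i}(z,w) = (z-\qqq^2 w)^{-1}(z-w)^{-1}$; different-color mixing factors are polynomial. The factors of type $(z-\qqq^2 w)$ have exactly the form demanded of $\Sss_{\vec{k}}$, so the issue is to eliminate the spurious $(z-w)$ poles. These cancel by a residue argument: pairing the two ordered bipartitions related by swapping a pair $\{x_{i,a},x_{i,b}\}$ of same-color variables across the block boundary, the residues of $\omega_{i,i}$ at $x_{i,a}=x_{i,b}$ are opposite in sign, and the color-symmetry of $F$ and $G$ makes the remaining factors match, so the residues cancel.

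The wheel condition is the main obstacle. To verify that $F \star G$ has a well-defined finite limit along a wheel triple $(x_{i,r_1},x_{i+\epsilon,s},x_{i,r_2})$ satisfying $x_{i,r_1}/x_{i+\epsilon,s}\to\qqq\ddd^{\epsilon}$ and $x_{i+\epsilon,s}/x_{i,r_2}\to\qqq\ddd^{-\epsilon}$, I would partition the bipartition sum according to the assignment of these three variables to the $F$-block or the $G$-block. When all three lie on one side, the wheel condition for $F$ or $G$ immediately yields a finite limit. In the remaining cases, a direct substitution into the defining formulas shows that the color-adjacent mixing factors $\omega_{i,i\pm 1}$ vanish at the prescribed wheel ratios $\qqq\ddd^{\pm 1}$, while the same-color factor $\omega_{i,i}$ acquires a simple pole exactly at the wheel locus $x_{i,r_1} = \qqq^2 x_{i,r_2}$. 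A careful matching then either cancels each pole against a neighbouring zero from an $\omega_{i,i\pm 1}$, or pairs up bipartitions related by swapping a same-color wheel pair across the block boundary so that color-symmetry of $F$ and $G$ forces the divergences to combine into a finite limit. Assembling this case analysis with the pole-condition result yields $F\star G\in\Sss$, and combining with associativity gives the full statement.
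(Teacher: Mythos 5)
The paper does not prove this proposition; it is stated as a standard fact, implicitly referencing the shuffle algebra machinery of Negu\c{t} (\cite{NegutTor}). So your proposal is being compared against the standard argument rather than against an in-paper proof. Your three-part outline (associativity via ordered tripartitions, pole condition via residue cancellation, wheel condition via case analysis on block assignment) is the correct route and is essentially sound, but a few points deserve sharpening.

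On the wheel condition, you offer two alternative mechanisms (``\emph{either} cancels each pole against a neighbouring zero \ldots \emph{or} pairs up bipartitions \ldots''). Only the first is actually relevant here. The wheel locus forces $x_{i,r_1}/x_{i,r_2}\to\qqq^2$, which is a zero of the $(z-\qqq^2 w)$-type factors, not of the $(z-w)$-type factors, so the antisymmetrized residue pairing that you correctly invoked for the pole condition plays no role on the wheel locus. If you run the case analysis on where the three wheel variables sit (all in the $F$-block, all in the $G$-block, or split $2$--$1$/$1$--$2$ in either direction), you find that in every mixed case exactly one simple pole appears --- either from the factor $(x_{i,r_1}-\qqq^2 x_{i,r_2})^{-1}$ internal to whichever block contains both same-color wheel variables, or from a cross-block $\omega_{i,i}$ --- and it is cancelled by exactly one vanishing color-adjacent cross-block mixing factor: $\omega_{i,i+\epsilon}(x_{i,r_1},x_{i+\epsilon,s})$ vanishes when $x_{i,r_1}$ lies in the earlier block and $x_{i+\epsilon,s}$ in the later one, while $\omega_{i+\epsilon,i}(x_{i+\epsilon,s},x_{i,r_2})$ vanishes when $x_{i+\epsilon,s}$ lies earlier and $x_{i,r_2}$ later. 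Thus every summand individually has a finite wheel limit; no cross-term cancellation is needed. You should carry out that verification explicitly (including both signs of $\epsilon$ and both orderings of the split), since ``a careful matching then'' is a placeholder rather than an argument. With that filled in, your proposal is a correct proof.
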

\noindent $\Sss$ is called the \textit{shuffle algebra of type $\hat{A}_{\ell-1}$}. 
Let $\Sss^+:=\Sss$ and $\Sss^-:=\Sss^{\mathrm{opp}}$.
%Observe that when discussing individual elements, we have no need to make distinction between the two.
%Therefore, we will often abuse notation and view $F\in\Sss$ as sitting in both $\Sss^+$ and $\Sss^-$.

\subsubsection{Relation to $\UTor$}
Let $\ddot{U}^+, \ddot{U}^-\subset\UTor$ be the subalgebras generated by 
\[\left\{ e_{i,n} \right\}_{i\in\ZZ/\ell\ZZ}^{n\in\ZZ}\hbox{ and }\left\{ f_{i,n} \right\}_{i\in\ZZ/\ell\ZZ}^{n\in\ZZ}\]
respectively.
The following fundamental result was proved by Negu\cb{t}:
\begin{thm}[\cite{NegutTor}]
There exist algebra isomorphisms $\Psi_+:\Sss^+\rightarrow\ddot{U}^+$ and $\Psi_-:\Sss^-\rightarrow\ddot{U}^-$ such that
\begin{align*}
\Psi_+(x_{i,1}^k)&=e_{i,k}\\
\Psi_-(x_{i,1}^k)&=f_{i,k}
\end{align*}
\end{thm}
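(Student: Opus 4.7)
The plan is to construct $\Psi_+$ on generators by $e_{i,k} \mapsto x_{i,1}^k \in \Sss^+_{\vec{e}_i}$ and then check it extends to an algebra homomorphism before proving bijectivity. Note that $x_{i,1}^k$ lies in $\Sss^+$ vacuously at degree $\vec{e}_i$, since there are no two variables of the same color (pole condition trivial) and no wheel to impose. So the map is defined on generators; the issue is whether it respects the defining relations of $\ddot{U}^+$.

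For the quadratic relations $e_i(z)e_j(w) = g_{i,j}(\ddd^{m_{i,j}}z/w)\,e_j(w)e_i(z)$, I would compute $x_{i,1}^a \star x_{j,1}^b$ using the definition of $\star$: the mixing term $\omega_{i,j}(x_{i,1}/x_{j,1})$ is precisely engineered so that swapping the order of the two factors multiplies by the rational function $g_{i,j}(\ddd^{m_{i,j}}x_{i,1}/x_{j,1})$, matching the quantum toroidal relation. Expansion in $z,w$ gives the current form. For the Serre relations, the key observation is that $\Sym_{z_1,z_2}[e_i(z_1),[e_i(z_2),e_{i\pm 1}(w)]_\qqq]_{\qqq^{-1}}$ maps to a shuffle element in $\Sss^+_{(2\vec{e}_i+\vec{e}_{i\pm 1})}$ whose numerator polynomial is a symmetrization that vanishes precisely on the wheel locus $x_{i,1}/x_{i\pm 1,1} = \qqq\ddd^{\pm 1}$ and $x_{i\pm 1,1}/x_{i,2} = \qqq\ddd^{\mp 1}$; by the wheel condition, this forces the element to be zero in $\Sss^+$. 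Thus $\Psi_+$ is a well-defined homomorphism.

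For surjectivity, I would show by induction on $|\vec{k}|$ that the subalgebra of $\Sss^+$ generated by $\{x_{i,1}^k\}$ under $\star$ exhausts $\Sss^+_{\vec{k}}$. The strategy is Negu\cb{t}'s specialization/factorization argument: given $F \in \Sss^+_{\vec{k}}$, pick a variable, say $x_{i_0, k_{i_0}}$, and consider the "leading" behavior of $F$ as this variable is taken to infinity along an appropriate chain; the residue/limit, together with the wheel conditions, recovers $F$ up to a shuffle product $G \star x_{i_0,1}^m$ where $G$ has lower degree. Iterating reduces every element to a sum of iterated shuffle products of $x_{i,1}^k$. The wheel conditions are used twice: to certify that intermediate pieces are in $\Sss^+$, and to rule out residual obstructions at each step.

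For injectivity, I would compare graded dimensions. A triangular decomposition / PBW-type argument (using affine root data for $\widehat{\mathfrak{sl}}_\ell$ together with the central extension structure) bounds $\dim \ddot{U}^+_{\vec{k}}$ from above by a combinatorial count matching the Hilbert series of the generated subalgebra of $\Sss^+$, and equality follows from surjectivity. Consequently $\Psi_+$ is an isomorphism. The statement for $\Psi_-$ is then formal: the antiautomorphism $\eta$ of \eqref{EtaDef} (or simply $e_{i,n} \leftrightarrow f_{i,n}$) intertwines $\ddot{U}^+$ with $\ddot{U}^-$, and passing to $\Sss^- = \Sss^{\mathrm{opp}}$ absorbs the reversal of multiplication. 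The main obstacle is surjectivity: the recursive reduction requires carefully tracking which specializations yield shuffle elements still satisfying the wheel conditions, and verifying the induction closes at every step is the technical heart of the proof.
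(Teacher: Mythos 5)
The paper does not prove this theorem; it is imported as background from Negu\cb{t} \cite{NegutTor}, so there is no internal argument to compare your sketch against. With that caveat, a few remarks on the sketch itself.

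You actually construct the inverse map $\ddot{U}^\pm\to\Sss^\pm$ sending $e_{i,k}\mapsto x_{i,1}^k$, which is fine (the theorem's $\Psi_\pm$ is its inverse), but you should be explicit about the direction, since ``well-definedness'' is checked against the relations of $\ddot{U}^+$ while ``surjectivity'' is a statement about $\Sss^+$.

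Your handling of the Serre relations contains a conceptual error. $\Sss^+$ is a \emph{subalgebra} of $\mathbb{S}$ cut out by the pole and wheel conditions, not a quotient, so nothing can be ``forced to be zero in $\Sss^+$'' by the wheel conditions. If the symmetrized shuffle product corresponding to $\Sym_{z_1,z_2}[e_i(z_1),[e_i(z_2),e_{i\pm 1}(w)]_\qqq]_{\qqq^{-1}}$ were a nonzero rational function, it would simply be a nonzero element of $\mathbb{S}$, and the map would fail to be well-defined. What one actually verifies is that this symmetrization is identically zero as a rational function; this is a direct computation with the mixing factors $\omega_{i,j}$ and does not invoke the wheel conditions. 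Where the wheel conditions genuinely enter is in (i) checking that the shuffle product preserves the wheel conditions, so the image of the $x_{i,1}^k$ actually lies in $\Sss^+$, and (ii) the surjectivity argument.

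Your injectivity argument is circular as stated: you propose to bound $\dim\ddot{U}^+_{\vec k}$ by the Hilbert series of the generated subalgebra and then ``equality follows from surjectivity,'' but that only gives $\dim\ddot{U}^+_{\vec k}\le\dim\Sss^+_{\vec k}$, which is consistent with a non-injective surjection. Injectivity needs an independent input, typically either a faithful representation on which both sides act compatibly, or an a priori PBW basis for $\ddot{U}^+$ not derived from the shuffle realization. Your surjectivity sketch is pitched at the right level and correctly identifies the specialization recursion as the hard step; just be aware that the recursion must be set up so that the ``lower degree'' piece is again seen to satisfy the pole and wheel conditions at each stage, which is exactly the delicate bookkeeping you mention.
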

\noindent Therefore, we can use $\Sss$ to give alternative presentations of elements of $\ddot{U}^\pm$.
The utility of this is bolstered by the following:

\begin{prop}[\cite{NegutCyc}, cf. Proposition 4.8 of \cite{WreathEigen}]\label{ShuffleFock}
For $F\in \Sss^+_{\vec{k}}$ and $G\in\Sss^-_{\vec{k}}$, the actions of $\Psi_{+}(F)$ and $\Psi_-(G)$ on $\mathcal{F}$ via $\tau_\upsilon^\pm$ are such that the only nonzero matrix elements involve pairs of partitions $\mu$ and $\lambda$ where $\mu$ adds $k_i$ $i$-nodes to $\lambda$ for all $i\in\ZZ/\ell\ZZ$. 
In this case, assign to each node $\square\in\mu\backslash\lambda$ its own variable $x_\square$ of color $\bar{c}_\square$. 
We then have
\begin{align}
\label{LowElement}
\langle\mu|(\tau_{\upsilon}^+\circ\Psi_+)(F)|\lambda\rangle 
&= \left( F(\{x_{i,r}\}) \bigg|_{x_\square\rightarrow\chi_\square\upsilon}\right)
\prod_{\square\in\mu\backslash\lambda}\frac{\left( \chi_\square-\qqq^2 \right)^{\delta_{c_\square\equiv 0}}}{\chi_\square\left( 1-\qqq^{2} \right)}
\prod_{\blacksquare\in\lambda}\omega_{\bar{c}_{\square},\bar{c}_{\blacksquare}}\left( \chi_{\square},\chi_{\blacksquare} \right)\\
\label{HighElement}
\langle\mu|(\tau_{\upsilon}^-\circ\Psi_-)(G)|\lambda\rangle
&= \left(G(\{x_{i,r}\})\bigg|_{x_{\square}\rightarrow \chi_{\square}\upsilon}\right)
\prod_{\square\in\mu\backslash\lambda}
\frac{\left( \qqq\chi_\square-\qqq^{-1} \right)^{\delta_{c_{\square}\equiv 0}}}
{\chi_\square(\qqq-\qqq^{-1})}
\displaystyle\prod_{\blacksquare\in\lambda}\omega_{\bar{c}_{\blacksquare},\bar{c}_\square}(\chi_{\blacksquare},\chi_\square)
\end{align}
%Here, $\displaystyle\prod^*$ denotes taking the product only over nonzero factors.
\end{prop}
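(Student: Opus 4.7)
The strategy is induction on the total degree $|\vec k|$, exploiting that the shuffle algebras $\Sss^\pm$ are generated under $\star$ by the single-variable elements $\{x_{i,1}^n\}$, which correspond under $\Psi_\pm$ to the algebra generators $e_{i,n}$ (resp.\ $f_{i,n}$) of $\ddot{U}^\pm$. Since $\Psi_\pm$ is an algebra isomorphism and $\tau_\upsilon^\pm$ is a representation, matching the formulas for $F\star G$ with those for $F$ and $G$ individually will suffice.

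For the base case $|\vec k|=1$, the unique added box $\square$ has color $i$, and $\Psi_+(x_{i,1}^n)=e_{i,n}$ acts by the matrix element recorded in Proposition~\ref{FockRep}. The substitution $x_\square\mapsto\chi_\square\upsilon$ in (\ref{LowElement}) yields $(\chi_\square\upsilon)^n$, which combined with the delta $\delta(z/\chi_\square\upsilon)$ in $\tau_\upsilon^+(e_i(z))$ picks off the coefficient of $z^{-n}$. What remains is to rewrite the ratio of products over $A_i(\lambda)\setminus\{\square\}$ and $R_i(\lambda)$ from Proposition~\ref{FockRep} as $\prod_{\blacksquare\in\lambda}\omega_{i,\bar c_\blacksquare}(\chi_\square,\chi_\blacksquare)$ multiplied by the vacuum prefactor $(\chi_\square-\qqq^2)^{\delta_{c_\square\equiv 0}}/(\chi_\square(1-\qqq^2))$ and the sign $(-\ddd)^{-d_{i+1}(\lambda)}$. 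This is a classical row/column telescoping inside the Young diagram of $\lambda$: for interior boxes of colors $i$ and $i\pm 1$ that are neither addable nor removable, successive $\omega$ contributions cancel in pairs, leaving only the boundary factors appearing in Proposition~\ref{FockRep}.

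For the inductive step, suppose $F\in\Sss^+_{\vec k}$ decomposes as $F=F_1\star F_2$ with $F_j\in\Sss^+_{\vec{n}_j}$. Then
\[
\langle\mu|(\tau_\upsilon^+\circ\Psi_+)(F)|\lambda\rangle=\sum_{\nu}\langle\mu|(\tau_\upsilon^+\circ\Psi_+)(F_1)|\nu\rangle\langle\nu|(\tau_\upsilon^+\circ\Psi_+)(F_2)|\lambda\rangle,
\]
the sum running over $\lambda\subset\nu\subset\mu$ with $\nu\setminus\lambda$ of color type $\vec{n}_2$. Each such $\nu$ encodes a choice of sub-multiset of $\mu\setminus\lambda$ with the prescribed colors, and the inductive hypothesis expresses each factor as an evaluation of $F_j$ at $\{\chi_\square\upsilon\}$ times its own $\omega$ factors indexed by boxes of $\lambda$ or $\nu$ respectively.

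The combinatorial heart of the argument, and the main obstacle, is matching this intermediate-state sum to the color symmetrization $\Sym$ appearing in the definition of $F_1\star F_2$. The $\omega$-factors from the $\Psi_+(F_1)$-matrix element indexed by $\blacksquare\in\nu$ must be split according to whether $\blacksquare\in\lambda$ or $\blacksquare\in\nu\setminus\lambda$: the first group contributes to the global $\prod_{\blacksquare\in\lambda}\omega$ factor for $F$, while the second produces exactly the internal mixing terms $\prod\omega_{i,j}(x_{i,r}/x_{j,s})$ that define $\star$. Summing over all splittings of $\mu\setminus\lambda$ into $F_1$-boxes and $F_2$-boxes reproduces the color symmetrization, with the factor $1/(\vec{n}_1!\,\vec{n}_2!)$ absorbing the overcounting of orderings within each color class. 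The formula (\ref{HighElement}) follows from an entirely parallel argument on $\Sss^-$, with the $\qqq\leftrightarrow\qqq^{-1}$ shifts reflecting the differing $(\qqq,\ddd)\mapsto(q,t)$ identification used by $\tau_\upsilon^-$; the vacuum prefactor then becomes $(\qqq\chi_\square-\qqq^{-1})^{\delta_{c_\square\equiv 0}}/(\chi_\square(\qqq-\qqq^{-1}))$ as required.
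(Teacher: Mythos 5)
Your overall strategy — peel off single-variable generators, compute the base case from Proposition \ref{FockRep} via a telescoping identity, and do induction using $F=F_1\star F_2$ together with a resolution of the identity over intermediate states — is indeed the standard scheme, and it is the approach the paper refers to (it cites the detailed proof of (\ref{HighElement}) in \cite{WreathEigen} and says (\ref{LowElement}) ``follows the same scheme''). The base case reduction and the bookkeeping of the $\omega$-factors and vacuum prefactors across the intermediate partition $\nu$ are all set up correctly.

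However, there is a genuine gap exactly where you flag ``the combinatorial heart.'' You write that ``summing over all splittings of $\mu\setminus\lambda$ into $F_1$-boxes and $F_2$-boxes reproduces the color symmetrization,'' but this is not literally true: the physical intermediate-state sum runs only over those $\nu$ with $\lambda\subset\nu\subset\mu$ that are \emph{partitions}, whereas the color symmetrization in the definition of $\star$ runs over \emph{all} color-preserving assignments of the boxes of $\mu\setminus\lambda$ to the two factors — including ones where $\lambda\cup\{F_2\text{-boxes}\}$ is not a valid Young diagram. For the two sums to agree, you must show that every such ``bad'' term in the symmetrization evaluates to zero at $x_\square\mapsto\chi_\square\upsilon$. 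This is a nontrivial fact that hinges on the precise form of the mixing terms together with the chosen parameter matching: for instance, with $q=\qqq^{-1}\ddd,\ t=\qqq^{-1}\ddd^{-1}$ one has $\omega_{i,i+1}(\chi_{\square_1},\chi_{\square_2})=\qqq\chi_{\square_2}-\ddd^{-1}\chi_{\square_1}=0$ whenever $\square_2$ (of color $i+1$) sits directly above $\square_1$ (of color $i$), which is exactly the obstruction to adding $\square_2$ to $\lambda$ before $\square_1$ has been placed. One has to check that these zeros of the $\omega$'s kill \emph{every} bad splitting, and that the surviving terms carry no spurious multiplicity beyond what $1/(\vec n_1!\,\vec n_2!)$ accounts for. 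Without this lemma your inductive step only shows that the intermediate-state sum is \emph{a subset} of the symmetrization terms, not that the two are equal. This vanishing phenomenon is the same mechanism underlying Proposition \ref{Adjacency} and the Lemma quoted inside the proof of Lemma \ref{Induct}, and is the real content that a complete proof must supply.
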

\noindent Formula (\ref{HighElement}) was proven in detail in \cite{WreathEigen}.
The proof of (\ref{LowElement}) follows the same scheme, although we emphasize that the matching of parameters $(\qqq,\ddd)$ with $(q,t)$ differ in the two Fock representations.

\begin{rem}
Note that the formulas (\ref{LowElement}) and (\ref{HighElement}) are only for box \textit{additions}.
Analogous formulas for box removals exist---the case of $\tau_\upsilon^-$ was worked out in \cite{WreathEigen}.
\end{rem}

\subsubsection{Pieri operators}
Wreath analogues of Pieri and dual Pieri rules for Macdonald polynomials would entail writing the products
\[
\textstyle e_n\left[ X^{(i)} \right]P_\lambda
\hbox{ and }
\displaystyle h_n\left[ \frac{(1-t\sigma^{-1})}{(1-q\sigma^{-1})}X^{(i)} \right] P_\lambda
\]
in terms of the ordinary (wreath) Macdonald basis $\{P_\mu\}$.
Our chosen transformation for $h_n$ is the wreath analogue of the transformation sending $h_n$ to the $g_n$ of \cite{Mac}.
Inverting $t$ and performing a plethysm to obtain scalar multiples of $\{H_\mu\}$, we can equivalently study 
\begin{align*}
&\textstyle e_n\left[(1-t^{-1}\sigma^{-1})^{-1} X^{(i)} \right]\tilde{P}_\lambda(q,t^{-1} )\\
\hbox{and }&\textstyle h_n\left[ (1-q\sigma^{-1})^{-1}X^{(i)} \right] \tilde{P}_\lambda(q,t^{-1}) 
\end{align*}
If we match parameters using 
\[
q=\qqq\ddd,\, t=\qqq\ddd^{-1}
\]
then by Theorem \ref{WreathEigen}, we can use $\mathrm{T}_-$ to carry this study over to $\mathcal{F}$.
%provided that we are able to compute the scalar
%\[
%\mathrm{T}_-\left( P_\lambda(q, t^{-1} ) \right)\bigg/|\lambda\rangle
%\] 
%The main technical result of this paper is the computation of this scalar.
Using the identification (\ref{PowertoBosons}), we would be interested in studying the action on $\mathcal{F}$ of
\begin{align*}
&\textstyle (\tau_\upsilon^-\circ\varpi^{-1})\left(e_n\left[(1-t^{-1}\sigma^{-1})^{-1} X^{(i)} \right]\right)\\
\hbox{and }&\textstyle (\tau_\upsilon^-\circ\varpi^{-1})\left(h_n\left[ (1-q\sigma^{-1})^{-1}X^{(i)} \right]\right) 
\end{align*}
We can then use (\ref{HighElement}) provided that these elements (without $\tau_\upsilon^-$) are in the image of $\Psi_-$ and we have formulas for their preimages.

To that end, let
\begin{align}
%E^+_{p,n}&:=
%\Sym\left( \prod_{1\le r<s\le n}\left\{\frac{x_{p+1,s}-q^{-1}x_{p,r}}{x_{p+1,s}-tx_{p,r}}\prod_{i,j\in\ZZ/\ell\ZZ}\omega_{i,j}\left( x_{i,r},x_{j,s} \right)\right\}\right.\\
%&\times \left.\prod_{r=1}^n\left\{\left( \frac{x_{p,r}}{x_{0,r}}-q\frac{x_{p+1,r}}{x_{0,r}} \right)\prod_{i\in\ZZ/\ell\ZZ}x_{i,r}  \right\} \right)\\
\label{Epn}
E_{p,n}&:=
\Sym\left( \prod_{1\le r<s\le n}\left\{\frac{x_{p+1,r}-\qqq^{-1}\ddd^{-1}x_{p,s}}{x_{p+1,r}-\qqq\ddd^{-1}x_{p,s}}\prod_{i,j\in\ZZ/\ell\ZZ}\omega_{i,j}\left( x_{i,r},x_{j,s} \right)\right\}\right.\\
&\times \left.\prod_{r=1}^n\left\{\left( \qqq^{-1}\ddd^{-1}\frac{x_{0,r}}{x_{p+1,r}}-\frac{x_{0,r}}{x_{p,r}} \right)\prod_{i\in\ZZ/\ell\ZZ}x_{i,r}  \right\} \right)\\
%H_{p,n}^+&:=
%\Sym\left( \prod_{1\le r<s\le n}\left\{\frac{t^{-1}x_{p+1,r}-x_{p,s}}{qx_{p+1,r}-x_{p,s}}\prod_{i,j\in\ZZ/\ell\ZZ}\omega_{i,j}\left( x_{i,r},x_{j,s} \right)\right\}\right.\\
%&\times \left.\prod_{r=1}^n\left\{\left( \frac{x_{p,r}}{x_{0,r}}-t^{-1}\frac{x_{p+1,r}}{x_{0,r}} \right)\prod_{i\in\ZZ/\ell\ZZ}x_{i,r}  \right\} \right)\\
\label{Hpn}
H_{p,n}&:=
\Sym\left( \prod_{1\le r<s\le n}\left\{\frac{\qqq^{-1}\ddd x_{p+1,s}-x_{p,r}}{\qqq\ddd x_{p+1,s}-x_{p,r}}\prod_{i,j\in\ZZ/\ell\ZZ}\omega_{i,j}\left( x_{i,r},x_{j,s} \right)\right\}\right.\\
&\times \left.\prod_{r=1}^n\left\{\left( \qqq\ddd^{-1}\frac{x_{0,r}}{x_{p+1,r}}-\frac{x_{0,r}}{x_{p,r}} \right)\prod_{i\in\ZZ/\ell\ZZ}x_{i,r}  \right\} \right)
\end{align}
The following is Theorem 4.31 of \cite{WreathEigen}:
\begin{prop}\label{ShufflePres}
The functions $E_{p,n}$ and $H_{p,n}$ lie in $\Sss$.
Under the matching of parameters
\[
q=\qqq\ddd,\, t=\qqq\ddd^{-1}
\]
we have
\begin{align*}
\varpi^{-1}e_n\left[(1-t^{-1}\sigma^{-1})^{-1} X^{(p)} \right]&=\frac{(1-qt)^{n\ell}}{\prod_{r=1}^n(1-(qt)^{-r})}\Psi_- (E_{p,n})\\
\varpi^{-1}h_n\left[ (1-q\sigma^{-1})^{-1}X^{(p)} \right]&= \frac{(qt)^{-n}(1-qt)^{n\ell}}{\prod_{r=1}^n(1-(qt)^{-r})}\Psi_-(H_{p,n})
\end{align*}
\end{prop}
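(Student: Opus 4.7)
The plan is to treat the two claims---shuffle-algebra membership $E_{p,n}, H_{p,n} \in \Sss$ and the two operator identities---separately, with the second reduced to a computation on the Fock representation via the Tsymbaliuk isomorphism.

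For membership in $\Sss$, I would verify the pole and wheel conditions directly from (\ref{Epn}) and (\ref{Hpn}). The apparent extra denominator $(x_{p+1,r} - \qqq\ddd^{-1} x_{p,s})$ in $E_{p,n}$, and symmetrically $(\qqq\ddd\, x_{p+1,s} - x_{p,r})$ in $H_{p,n}$, is cancelled up to a scalar by the factor $\omega_{p+1,p}(x_{p+1,r}, x_{p,s})$, respectively $\omega_{p,p+1}(x_{p,r}, x_{p+1,s})$, that already appears in the product over all color pairs. The remaining simple poles at $x_{i,r} = x_{i,s}$ coming from $\omega_{i,i}$ are annihilated by the color symmetrization, leaving only the allowed $(x_{i,r}-\qqq^2 x_{i,s})^{-1}$ poles. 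For the wheel conditions, on the locus $x_{i,r_1}/x_{i+\epsilon,s} = \qqq\ddd^\epsilon$ and $x_{i+\epsilon,s}/x_{i,r_2} = \qqq\ddd^{-\epsilon}$, two of the color-mixing factors $\omega_{i, i\pm 1}$ in the product specialize to zero, giving the required vanishing after $\Sym$.

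For the operator identities, the strategy is to transport both sides into $\mathcal{F}$ and compare matrix elements. By Theorem \ref{Eigenstates}, $\mathrm{T}_-: W \to \mathcal{F}$ intertwines $\rho_{\vec{\ccc}} \circ \varpi$ on $W$ with $\tau_\upsilon^-$ on $\mathcal{F}$, so for $X \in \Heis \subset\,'\ddot{U}'$ the operator $\tau_\upsilon^-(\varpi^{-1}(X))$ corresponds under $\mathrm{T}_-$ to $\rho_{\vec{\ccc}}(X)$ acting on $W$. The elements $e_n[(1-t^{-1}\sigma^{-1})^{-1} X^{(p)}]$ and $h_n[(1-q\sigma^{-1})^{-1} X^{(p)}]$ lie in a completion of $\Heis$ via (\ref{PowertoBosons}) and act on $W$ by the known symmetric-function multiplication. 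On the shuffle side, the matrix elements of $\Psi_-(E_{p,n})$ and $\Psi_-(H_{p,n})$ acting on $\mathcal{F}$ via $\tau_\upsilon^-$ are provided by formula (\ref{HighElement}). Matching the two descriptions yields the desired identities.

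The principal obstacle is the combinatorial bookkeeping, as both sides produce rational expressions in many variables and a direct matching is unwieldy. A cleaner route is to evaluate on the vacuum $|\varnothing\rangle \leftrightarrow \mathbb{1} \otimes 1$, then propagate by commutation with Heisenberg generators, and use irreducibility of $\mathcal{F}$ (Proposition \ref{FockRep}) to promote the match of matrix elements at a small collection of vectors into a full operator identity. The scalar $\frac{(1-qt)^{n\ell}}{\prod_{r=1}^n(1-(qt)^{-r})}$ is then pinned down by normalization: it encodes the discrepancy between the shuffle-product convention and the power-sum normalization (\ref{PowertoBosons}), and can be checked independently by specializing to $\ell=1$, where the statement reduces to the classical identification of $K_n$ in (\ref{KnShuff}) with multiplication by $e_n$ on $\Lambda_{q,t}$ due to \cite{FHHSY, FeiTsymK}.
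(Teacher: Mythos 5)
The paper does not actually prove this proposition; it is imported as Theorem~4.31 of \cite{WreathEigen}, and the only visible glimpse of the method is in the proof of the companion Proposition~\ref{PosEnHn}, which proceeds purely algebraically: one writes the Heisenberg generating series for $e_n$ and $h_n$ explicitly in the $b_{i,-k}$ via~(\ref{PowertoBosons}), applies $\varpi^{-1}$ using the ``Proposition~B.1'' machinery of \emph{loc.~cit.} that expresses $\varpi^{\mp 1}$ of such exponentials directly as $\Psi_\pm$ of explicit shuffle elements, and then simplifies. No representation theory enters. Your proposal instead tries to detect the identity inside the Fock module, and this runs into a circularity you do not resolve: to compute the matrix elements of $\tau_\upsilon^-(\varpi^{-1}(X))$ for $X\in\Heis^-$ via $\mathrm{T}_-$, you need the $\{|\lambda\rangle\}$-basis expansion of $\mathrm{T}_-(e_n[\cdots]\cdot v)$, i.e.\ you need to know $\mathrm{T}_-$ beyond the vacuum. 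But $\mathrm{T}_-$ is only characterized as ``the'' intertwiner, and its explicit description is exactly the statement that it sends the $H_\lambda$ line to $\FF|\lambda\rangle$ (Theorem~\ref{WreathEigen}). Expanding $e_n[\cdots]H_\lambda$ in the $\{H_\mu\}$ basis is precisely the Pieri rule one is hoping to extract from the proposition, so the comparison you propose presupposes what it is meant to establish. The ``clean route'' does not repair this: after evaluating on the vacuum, you suggest propagating by commutation with Heisenberg generators, but $\varpi^{-1}(\Heis^-)$ is the \emph{horizontal} Heisenberg, which has no tractable commutation with the vertical $\Heis$, and invoking irreducibility of $\mathcal{F}$ cannot promote agreement at a few vectors to an operator identity---irreducibility is a statement about invariant subspaces, not about rigidity of operators, and no intertwining structure (Schur-type argument) is available here since the two operators being compared are not claimed to commute with the module structure.

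Two further remarks. First, even if the representation-theoretic route could be made to close, you would still owe an argument that $\tau_\upsilon^-$ is faithful on whatever subalgebra contains both sides, so that equality of actions implies equality in $\UTor$; this is plausible (e.g.\ via the fact that (\ref{HighElement}) recovers the shuffle function) but is not free. Second, the suggested sanity check ``specialize to $\ell=1$'' is unavailable in this paper's framework: the quantum toroidal setup, the colored alphabets, and the shuffle algebra are all set up with $\ell\ge 3$ fixed from the start of Section~2, and the $\sigma$-plethysms and the mixing terms $\omega_{i,i\pm1}$ degenerate or coincide in low rank, so the constant $(1-qt)^{n\ell}/\prod_{r=1}^n(1-(qt)^{-r})$ cannot be pinned down by comparison with the $\ddot{\mathfrak{gl}}_1$ case of \cite{FHHSY,FeiTsymK}. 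The pole/wheel verification for membership in $\Sss$ is plausibly correct (the cancellation of the $(x_{p+1,r}-\qqq\ddd^{-1}x_{p,s})$ denominator against $\omega_{p+1,p}$ is right, and the $(x_{i,r}-x_{i,s})$ poles are odd under the transposition $(r\,s)$ so vanish under $\Sym$), but this is the easy half; the heart of the proposition is the explicit computation of $\varpi^{-1}$, which your outline does not actually carry out.
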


Alternatively, we can use the matching
\[
q=\qqq^{-1}\ddd,\, t=\qqq^{-1}\ddd^{-1}
\]
and use Theorem \ref{WreathEigen}(2).
To carry this out, we would use plethysms to transfer the Pieri rules over to 
\begin{align*}
&\textstyle e_n\left[(1-t^{-1}\sigma^{-1})^{-1} (-X^{(i)}) \right]\tilde{P}_\lambda\left[ -X^\bullet;q,t^{-1} \right]\\
\hbox{and }&\textstyle h_n\left[ (1-q\sigma^{-1})^{-1}(-X^{(i)}) \right] \tilde{P}_\lambda\left[ -X^\bullet;q,t^{-1} \right]
\end{align*}
We would then be interested in the action on $\mathcal{F}$ of
\begin{align*}
&\textstyle (\tau_\upsilon^+\circ\varpi)\left( e_n\left[(1-t^{-1}\sigma^{-1})^{-1} (-X^{(i)}) \right]\right)\\
\hbox{and }&\textstyle (\tau_\upsilon^+\circ\varpi)\left( h_n\left[ (1-q\sigma^{-1})^{-1}(-X^{(i)} )\right]\right) 
\end{align*}
%Let
%\begin{align*}
%E_{p,n}^+&= \Sym\left( \prod_{1\le r<s\le n}\left\{\frac{\qqq^{-1}\ddd x_{p+1,r}-x_{p,s}}{\qqq\ddd x_{p+1,r}-x_{p,s}}\prod_{i,j\in\ZZ/\ell\ZZ}\omega_{i,j}\left( x_{i,s},x_{j,r} \right)\right\}\right.\\
%&\times \left.\prod_{r=1}^n\left\{\left( \qqq\ddd^{-1}\frac{x_{0,r}}{x_{p+1,r}}-\frac{x_{0,r}}{x_{p,r}} \right)\prod_{i\in\ZZ/\ell\ZZ}x_{i,r}  \right\} \right)\\
%H_{p,n}^+&=\Sym\left( \prod_{1\le r<s\le n}\left\{\frac{x_{p+1,s}-\qqq^{-1}\ddd^{-1}x_{p,r}}{x_{p+1,s}-\qqq\ddd^{-1}x_{p,r}}\prod_{i,j\in\ZZ/\ell\ZZ}\omega_{i,j}\left( x_{i,s},x_{j,r} \right)\right\}\right.\\
%&\times \left.\prod_{r=1}^n\left\{\left( \qqq^{-1}\ddd^{-1}\frac{x_{0,r}}{x_{p+1,r}}-\frac{x_{0,r}}{x_{p,r}} \right)\prod_{i\in\ZZ/\ell\ZZ}x_{i,r}  \right\} \right) 
%\end{align*}
\begin{prop}\label{PosEnHn}
Under the matching of parameters
\[
q=\qqq^{-1}\ddd,\, t=\qqq^{-1}\ddd^{-1}
\]
we have
\begin{align}
\label{PosEn}
\textstyle \varpi e_n\left[(1-t^{-1}\sigma^{-1})^{-1} (-X^{(p)}) \right]&=\frac{(-1)^n(qt)^n(\qqq-\qqq^{-1})^{n\ell}}{\prod_{r=1}^n(1-(qt)^r)}\Psi_+(H_{p,n})\\ 
\label{PosHn}
\textstyle \varpi h_n\left[ (1-q\sigma^{-1})^{-1}(-X^{(p)} )\right]&= \frac{(-1)^n(\qqq-\qqq^{-1})^{n\ell}}{\prod_{r=1}^n(1- (qt)^r)}\Psi_+(E_{p,n})
\end{align}
\end{prop}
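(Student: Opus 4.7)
The plan is to derive this proposition from Proposition~\ref{ShufflePres} by invoking the Miki symmetry $\varpi = \eta\varpi^{-1}\eta$ from Theorem~\ref{MikiAut} (noting that $\eta^2 = \mathrm{id}$ on the generators listed in (\ref{EtaDef})). From this viewpoint, the two identities in Proposition~\ref{PosEnHn} should be the ``$\eta$-conjugates'' of the two identities in Proposition~\ref{ShufflePres}, and the three differences between the propositions---the sign $-X^{(p)}$, the swap $E_{p,n}\leftrightarrow H_{p,n}$ between the $e_n$ and $h_n$ versions, and the change of parameter matching from $q=\qqq\ddd,\,t=\qqq\ddd^{-1}$ to $q=\qqq^{-1}\ddd,\,t=\qqq^{-1}\ddd^{-1}$ (i.e.\ the formal substitution $\qqq\to\qqq^{-1}$ with $\ddd$ fixed)---should each emerge naturally from applying $\eta$.

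First I would compute the action of $\eta$ on the Heisenberg subalgebra using $\eta(b_{i,k}) = -\gamma^k b_{i,-k}$ and trace it through (\ref{PowertoBosons}) to obtain a plethystic automorphism of $\Lambda^{\otimes\ell}$. I expect the resulting substitution on the symmetric function side to factor as (a) the sign $-X^{(p)}$, (b) a form of the classical involution $\omega$ swapping $e_n\leftrightarrow h_n$ (after the plethysms built into the definition), and (c) the parameter substitution $\qqq\to\qqq^{-1}$. Item (c) is consistent with the fact that the two parameter matchings in Propositions~\ref{ShufflePres} and~\ref{PosEnHn} differ precisely by $\qqq\to\qqq^{-1}$, and items (a)--(b) together account for the flip $E_{p,n}\leftrightarrow H_{p,n}$. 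Applying $\varpi = \eta\varpi^{-1}\eta$ then rewrites the LHS of Proposition~\ref{PosEnHn} as $\eta$ applied to an instance of the LHS of Proposition~\ref{ShufflePres}, opening the door to using the latter identity.

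Next I would apply $\eta$ to the shuffle side. Since $\eta$ is an antiautomorphism sending $f_{i,k}\mapsto f_{i,-k}$, $\eta(\Psi_-(F))$ equals $\Psi_-(\tilde F)$ for $\tilde F$ obtained by reversing the shuffle-product order and substituting $x_{i,r}\mapsto x_{i,r}^{-1}$. Under the opposite-algebra identification $\Sss^- = \Sss^{\mathrm{opp}}$, combined with the substitution $\qqq\to\qqq^{-1}$ (and matching of variable rescalings absorbed into the color-symmetrization), I expect $\Psi_-(\tilde E_{p,n})$ to coincide with $\Psi_+(H_{p,n})$ and $\Psi_-(\tilde H_{p,n})$ with $\Psi_+(E_{p,n})$. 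Matching the overall scalars requires only tracking how $\frac{(1-qt)^{n\ell}}{\prod_r(1-(qt)^{-r})}$ transforms: under $qt=\qqq^2\to\qqq^{-2}$, the identity $(1-\qqq^2)^{n\ell}=(-\qqq)^{n\ell}(\qqq-\qqq^{-1})^{n\ell}$ produces the factor $(\qqq-\qqq^{-1})^{n\ell}$, and the extra $(-1)^n(qt)^n$ prefactor comes from Jacobian-type constants accumulated during the plethystic substitution $-X^{(p)}$ and the inversion of the denominator $\prod(1-(qt)^{-r})=(qt)^{-\binom{n+1}{2}}(-1)^n\prod(1-(qt)^r)$.

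The main obstacle will be the shuffle-algebra identification of $\tilde E_{p,n}$ (resp.\ $\tilde H_{p,n}$) with a $\qqq$-inverted form of $H_{p,n}$ (resp.\ $E_{p,n}$), because the reversal $x_{i,r}\mapsto x_{i,r}^{-1}$ together with the parameter substitution reshuffles the asymmetric mixing terms $\omega_{i,i\pm 1}$ and the explicit $\qqq^{\pm 1}\ddd^{\pm 1}$ factors in (\ref{Epn})--(\ref{Hpn}) in intricate ways. If a direct combinatorial comparison proves too painful, an alternative is to check the identity on matrix coefficients using Proposition~\ref{ShuffleFock}, Proposition~\ref{FockRep}, and Theorem~\ref{WreathEigen}(1): both sides then reduce to explicit rational-function identities indexed by pairs of partitions differing by a batch of $n$ nodes of each color, and these can be verified in the same spirit as the proof of Theorem~4.31 of \cite{WreathEigen}.
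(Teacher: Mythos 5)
Your overall strategy is the same as the paper's: write $\varpi=\eta\varpi^{-1}\eta$, apply the inner $\eta$ to the Heisenberg generating function for $e_n$ (or $h_n$), invoke the known $\varpi^{-1}$-result (the paper actually uses Proposition B.1 of \cite{WreathEigen} at the level of the generators $b_{i,k}^{\perp}$, together with Lemma 3.15 of \textit{loc.\ cit.}, rather than the packaged statement of Proposition \ref{ShufflePres}, but these are the same source), and finally push the outer $\eta$ through a shuffle presentation.

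However, there is a genuine conceptual slip in item (c) of your analysis that would derail a literal execution. You attribute the parameter change $\qqq\to\qqq^{-1}$ (which does exactly relate the two matchings in Propositions \ref{ShufflePres} and \ref{PosEnHn}) to the action of $\eta$. But $\eta$ is, by definition (\ref{EtaDef}), a $\CC(\qqq)$-linear antiautomorphism: it \emph{fixes} $\qqq$ and inverts $\ddd$. On the symmetric-function side, inverting $\ddd$ (while keeping $\qqq$) swaps $q\leftrightarrow t$ under either matching; it does not invert $\qqq$. The apparent $\qqq\to\qqq^{-1}$ between the two propositions is really a bookkeeping artifact arising from the fact that the reference result is stated under the other matching and must be translated before use — as the paper notes explicitly: ``Proposition B.1 of \textit{loc.\ cit.} is written in terms of the other matching of parameters... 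Applying it correctly here gives...''. If you try to make $\eta$ do the $\qqq\to\qqq^{-1}$ work, the coefficient bookkeeping (including the $\gamma^{\mp k}$ factors from $\eta(b_{i,\pm k})$) will not close up.

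A second, smaller difference: you propose applying $\eta$ to $\Psi_-$-elements and working through the opposite-algebra structure $\Sss^-=\Sss^{\mathrm{opp}}$. The paper instead lands on a $\Psi_+(E_{p,n}^{\dagger})$ after invoking Prop.\ B.1 and then uses a direct formula for $\Psi_+^{-1}\eta\Psi_+$ on $\Sss^+$ (sending $F(x_{i,r})\mapsto F(x_{i,r}^{-1})$ up to monomial and sign factors and $\ddd\mapsto\ddd^{-1}$) to turn $E_{p,n}^{\dagger}$ into $H_{p,n}$. Either route could in principle work, but the explicit shuffle conjugation by $\eta$ is the key computational tool and should be written down; your proposal only gestures at it. Your backup strategy of verifying the identity on matrix elements via Propositions \ref{ShuffleFock} and \ref{FockRep} and Theorem \ref{WreathEigen}(1) is a sound fallback — indeed this is close to the spirit of the original proof of Theorem 4.31 of \cite{WreathEigen} — but it is considerably more laborious than the algebraic route once the $\eta$-conjugation formula on $\Sss^+$ is in hand.
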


\begin{proof}
First consider (\ref{PosEn}). 
Our goal is to utilize Proposition B.1 of \cite{WreathEigen}.
Although not explicitly mentioned in the proposition, we are using (\ref{PowertoBosons}):
\begin{align}
\label{PosEnHeis}
\sum_{n=0}^\infty\textstyle\varpi e_n\left[(1-t^{-1}\sigma^{-1})^{-1} (-X^{(p)}) \right] (-z)^n&=\varpi\exp\left( \sum_{k=1}^\infty\frac{\sum_{i=0}^{\ell-1}t^{-ki}b_{p+i,-k}}{[k]_\qqq(1-t^{-k\ell})}z^k \right)
%\sum_{n=0}^\infty\textstyle\varpi h_n\left[ (1-q\sigma^{-1})^{-1}(-X^{(p)} )\right] z^n&=\varpi\exp\left( -\frac{\sum_{i=0}^{\ell-1}q^{ik}b_{p+i,-k}}{[k]_\qqq(1-q^{k\ell})}z^k \right)
\end{align}
By Theorem \ref{MikiAut}, we have $\varpi=\eta\varpi^{-1}\eta$.
Applying this to (\ref{PosEnHeis}) yields
\begin{align*}
&= \eta\exp\left( -\sum_{k=1}^\infty\frac{\sum_{i=0}^{\ell-1}q^{-ki}\varpi^{-1}(b_{p+i,k})}{[k]_\qqq(1-q^{-k\ell})}z^k \right)
\end{align*}
Through computations similar to those of Lemma 3.15 of \cite{WreathEigen}, we can see that
\[
\frac{-\sum_{i=0}^{\ell-1}q^{-ki}\varpi^{-1}(b_{p+i,k})}{[k]_\qqq(1-q^{-k\ell})}=\frac{\qqq^{-k}\varpi^{-1}(b_{p,k}^\perp)-\ddd^k \varpi^{-1}(b_{p+1,k}^\perp)}{{\qqq-\qqq^{-1}}}
\]
Here, $b_{i,k}^\perp$ is defined in 3.1.3 of \textit{loc. cit.} in terms of a pairing.
Note that Proposition B.1 of \textit{loc. cit.} is written in terms of the other matching of parameters
\[
q=\qqq\ddd,\,t=\qqq\ddd^{-1}
\]
Applying it correctly here gives us
\begin{equation}
\sum_{n=0}^\infty\textstyle\varpi e_n\left[(1-t^{-1}\sigma^{-1})^{-1} (-X^{(p)}) \right] z^n
\displaystyle=
\eta\left(\sum_{n=0}^\infty \frac{(-1)^{n\ell}q^{-n\ell}\left( 1-\qqq^{-2} \right)^{n\ell}}{\qqq^{2n}\prod_{r=1}^n(1-(qt)^r)}\Psi_+(E_{p,n}^\dagger)\right)
\label{PosEnHeisInv}
\end{equation}
where
\begin{align*}
E_{p,n}^\dagger
&=\Sym\left( \prod_{1\le r<s\le n}\left\{\frac{x_{p+1,r}-\qqq^{-1}\ddd^{-1}x_{p,s}}{x_{p+1,r}-\qqq\ddd^{-1}x_{p,s}}\prod_{i,j\in\ZZ/\ell\ZZ}\omega_{i,j}\left( x_{i,r},x_{j,s} \right)\right\}\right.\\
&\times \left.\prod_{r=1}^n\left\{\left( \frac{x_{p,r}}{x_{0,r}}-\qqq\ddd\frac{x_{p+1,r}}{x_{0,r}} \right)\prod_{i\in\ZZ/\ell\ZZ}x_{i,r}  \right\} \right)
\end{align*}
Finally, we observe that $\eta$ induces the following map on $\Sss^+$:
\[
\Psi_+^{-1}\eta\Psi_+(F)
=\left.F(x_{i,r}^{-1})\prod_{i\in\ZZ\ell\ZZ}\prod_{r=1}^{k_i}(-\ddd)^{k_{i+1}k_i} x_{i,r}^{k_{i+1}+k_{i-1}-2(k_i-1)}\right|_{\ddd\mapsto\ddd^{-1}}
\]
Using this to deal with $\eta$, the right hand side of (\ref{PosEnHeisInv}) becomes
\begin{align*}
&= \sum_{n=0}^\infty\frac{t^{-n\ell}(1-\qqq^{-2})^{n\ell}}{\qqq^{2n}\prod_{r=1}^n(1-(qt)^r)}(-1)^{n}\ddd^{-n\ell}\Psi_+(H_{p,n})
\end{align*}
from which (\ref{PosEn}) follows.
The proof of (\ref{PosHn}) is similar.
\end{proof}

%\noindent Lemma \ref{ShufflePres} for $\tilde{e}^-_n(p)$ and $\tilde{h}_{n}^-(p)$ is Theorem 4.31 of \cite{WreathEigen}.
%We will prove the positive variants in the course of proving Theorem \ref{WreathEigenDual} in the Appendix.
We end by applying Proposition \ref{ShuffleFock} to $E_{p,n}$ and $H_{p,n}$.
The zeroes of $E_{p,n}$ and $H_{p,n}$ will force certain matrix elements to vanish.
Specifically, we obtain:

\begin{prop}\label{Adjacency}
The matrix elements 
\[\langle\mu|(\tau_{\upsilon}^-\circ\Psi_-)(E_{p,n})|\lambda\rangle\hbox{ and }\langle\mu|(\tau_\upsilon^+\circ\Psi_+)(H_{p,n})|\lambda\rangle\]
are nonzero only if $\mu\backslash\lambda$ has exactly $n$ nodes of each color and no horizontally adjacent $p$- and $(p+1)$-nodes.
Similarly, 
\[\langle\mu|(\tau_\upsilon^-\circ\Psi_-)(H_{p,n})|\lambda\rangle\hbox{ and }\langle\mu|(\tau_\upsilon^+\circ\Psi_+)(E_{p,n})|\lambda\rangle\]
are nonzero only if $\mu\backslash\lambda$ has exactly $n$ nodes of each color and no vertically adjacent $p$- and $(p+1)$-nodes.
\end{prop}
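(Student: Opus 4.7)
The plan is to exhibit, for each of the four shuffle elements in question, a single global product factor whose zero locus encodes the forbidden adjacency, and then to observe that this factor becomes manifest upon specialization to node characters via Proposition \ref{ShuffleFock}. The condition that $\mu\backslash\lambda$ contain exactly $n$ nodes of each color is already built into that proposition (since $E_{p,n},H_{p,n}\in\Sss_{\vec{n}}$ with $\vec{n}=(n,\ldots,n)$), so all that remains is the adjacency vanishing. I concentrate on the matrix element $\langle\mu|(\tau_\upsilon^-\circ\Psi_-)(E_{p,n})|\lambda\rangle$; the remaining three cases will follow by the same mechanism once the parameter matching is tracked.

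First I would return to the pre-symmetrized integrand of (\ref{Epn}) and collect every factor that can vanish on the locus $x_{p,a}=\qqq\ddd\,x_{p+1,b}$. Three sources contribute. The diagonal expression $\qqq^{-1}\ddd^{-1}x_{0,r}/x_{p+1,r}-x_{0,r}/x_{p,r}$ simplifies to a unit multiple of $(x_{p,r}-\qqq\ddd\,x_{p+1,r})$ and so covers the case $a=b$. The numerator $(x_{p+1,r}-\qqq^{-1}\ddd^{-1}x_{p,s})$ of the rational factor at $r<s$ is a unit multiple of $(x_{p,s}-\qqq\ddd\,x_{p+1,r})$ and covers every pair with $a>b$. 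Finally the mixing term $\omega_{p,p+1}(x_{p,r},x_{p+1,s})=\qqq x_{p+1,s}-\ddd^{-1}x_{p,r}$ from the $r<s$ shuffle factor is a unit multiple of $(x_{p,r}-\qqq\ddd\,x_{p+1,s})$ and covers every pair with $a<b$. I therefore expect these to combine into a single product
\[
\mathcal{V}:=\prod_{a,b=1}^{n}\bigl(x_{p,a}-\qqq\ddd\,x_{p+1,b}\bigr)
\]
appearing, up to a nonzero scalar, as a common factor of the pre-symmetrized integrand.

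Because $\mathcal{V}$ is already symmetric separately in the $p$- and in the $(p+1)$-color variables, it persists through the color-symmetrization and divides $E_{p,n}$ inside $\Sss$. Under the matching $q=\qqq\ddd$ used for $\tau_\upsilon^-$, the substitution $x_\square\mapsto\chi_\square\upsilon$ of Proposition \ref{ShuffleFock} turns $\mathcal{V}$ into $\upsilon^{n^{2}}\prod_{\square_p,\square_{p+1}}\!(\chi_{\square_p}-q\chi_{\square_{p+1}})$, the product running over $p$- and $(p+1)$-nodes of $\mu\backslash\lambda$. A horizontally adjacent pair of a $p$- and a $(p+1)$-node satisfies $\chi_{\square_p}=q\chi_{\square_{p+1}}$ (for $\ell\ge 3$ the $(p+1)$-node necessarily sits on the left), so one factor in the product---and hence the whole matrix element---vanishes.

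The remaining three cases I would handle by the identical pair-enumeration applied to $H_{p,n}$, which should produce the companion factor $\prod_{a,b}(x_{p,a}-\qqq^{-1}\ddd\,x_{p+1,b})$. Under the $\tau_\upsilon^-$ matching one has $\qqq^{-1}\ddd=t^{-1}$, so this factor vanishes precisely at vertical adjacencies $\chi_{\square_{p+1}}=t\chi_{\square_p}$; under the $\tau_\upsilon^+$ matching one instead has $\qqq\ddd=t^{-1}$ and $\qqq^{-1}\ddd=q$, which exchanges the horizontal and vertical conclusions for the pair $E_{p,n}$, $H_{p,n}$ and yields the final two claims. The one place where I expect to need care is the combinatorial bookkeeping in the second paragraph: checking that the three listed sources jointly contribute exactly one copy of $(x_{p,a}-\qqq\ddd\,x_{p+1,b})$ for each ordered pair $(a,b)\in\{1,\ldots,n\}^{2}$, with no double counting inside the $r<s$ block and no stray contribution from $\omega_{p+1,p}(x_{p+1,r},x_{p,s})$, whose zero lies on the different line $x_{p+1,r}=\qqq\ddd^{-1}x_{p,s}$. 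Once that accounting is pinned down, the rest of the argument is formal.
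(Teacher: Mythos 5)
Your proposal is correct and follows exactly the route the paper only gestures at (``The zeroes of $E_{p,n}$ and $H_{p,n}$ will force certain matrix elements to vanish''): you isolate the global factor $\prod_{a,b}(x_{p,a}-\qqq\ddd\,x_{p+1,b})$ in $E_{p,n}$ (resp.\ $\prod_{a,b}(x_{p,a}-\qqq^{-1}\ddd\,x_{p+1,b})$ in $H_{p,n}$), note that it pulls out of the color-symmetrization, and then read off the vanishing under the character specialization of Proposition \ref{ShuffleFock}, tracking how the two parameter matchings exchange the horizontal/vertical conclusions. The bookkeeping you flagged as the delicate point is indeed fine---the diagonal factor, the numerator of the extra rational factor, and $\omega_{p,p+1}$ cover $a=b$, $a>b$, $a<b$ respectively, while $\omega_{p+1,p}$ is exactly cancelled by the denominator of the extra rational factor and so contributes nothing.
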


\section{Normalization}\label{Normal}
Here, we highlight a normalization quantity that directly yields the inner products $\langle P_{\tensor[^t]{\lambda}{}}^*, P_\lambda\rangle_{q,t}$.
Computing this normalization is also key for applying the shuffle algebra to studying Pieri rules.
%The idea is to use Proposition \ref{ShuffleFock} on $E_{p,n}$ and $H_{p,n}$ and then combine the results in an appropriate manner.
%\subsubsection{Mise en place}
%As a consequence of Theorem \ref{Eigenstates}, both $\mathrm{T}_+(1\otimes e^{\core(\lambda)})$ and $\mathrm{T}_-(1\otimes e^{\core(\lambda)})$ are (nonzero) scalar multiples of $|\core(\lambda)\rangle$.
%Let us denote these scalars by
%\begin{align*}
%\fff^+_{\core(\lambda)}&:=\left.\mathrm{T}_+(1\otimes e^{\core(\lambda)})\middle/|\core(\lambda)\rangle\right.
%&
%\fff^-_{\core(\lambda)}&:=\left.\mathrm{T}_-(1\otimes e^{\core(\lambda)})\middle/|\core(\lambda)\rangle\right.
%\end{align*}
%We will not study them in detail but merely acknowledge their existence.
\subsection{Setup}
When
\[
q=\qqq^{-1}\ddd,\, t=\qqq^{-1}\ddd^{-1}
\]
we define the scalars $N_\lambda^+(q,t)$ and $M_\lambda^+(q,t)$ by
\begin{align*}
%\mathrm{T}_+\left( P_\lambda[(1-t^{-1}\sigma^{-1})^{-1}(-X^\bullet);q,t^{-1}]\otimes e^{\core(\lambda)} \right)
(\tau_\upsilon^+\circ\varpi)\left(\tilde{P}_\lambda[-X^\bullet;q,t^{-1}]  \right)|\core(\lambda)\rangle
&= N_\lambda^+(q,t^{-1})|\lambda\rangle\\
%\mathrm{T}_+\left( Q_\lambda[(1-t^{-1}\sigma^{-1})^{-1}(-X^\bullet);q,t^{-1}]\otimes e^{\core(\lambda)} \right)
(\tau_\upsilon^+\circ\varpi)\left(\tilde{Q}_\lambda[-X^\bullet;q,t^{-1}]  \right)|\core(\lambda)\rangle
&= M_\lambda^+(q,t^{-1})|\lambda\rangle
\end{align*}
Similarly, with
\[
q=\qqq\ddd,\, t=\qqq\ddd^{-1}
\]
we define $N_\lambda^-(q,t)$ and $M^-_\lambda(q,t)$ as
\begin{align*}
%\mathrm{T}_-\left( P_\lambda[(1-t^{-1}\sigma^{-1})^{-1}X^\bullet;q,t^{-1}]\otimes e^{\core(\lambda)} \right)
(\tau_\upsilon^-\circ\varpi^{-1})\left(\tilde{P}_\lambda(q,t^{-1})  \right)|\core(\lambda)\rangle
&= N_\lambda^-(q,t^{-1})|\lambda\rangle\\
%\mathrm{T}_-\left( Q_\lambda[(1-t^{-1}\sigma^{-1})^{-1}X^\bullet;q,t^{-1}]\otimes e^{\core(\lambda)} \right)
(\tau_\upsilon^-\circ\varpi^{-1})\left(\tilde{Q}_\lambda(q,t^{-1})  \right)|\core(\lambda)\rangle
&= M_\lambda^-(q,t^{-1})|\lambda\rangle
\end{align*}
%Note that we align the dependence of $N_\lambda^\pm(q,t)$ and $M_\lambda^\pm(q,t)$ on $(q,t)$ with $P_\lambda[X^\bullet; q,t]$ and $Q_\lambda[X^\bullet; q,t]$ even though they're defined in terms of the Fock representation; this is because we expect them to be closely related with their (infinite-variable) principal evaluations.
Our interest in these scalars comes from the following trivial observation:
\begin{prop}
The norm formulas can be computed as follows:
\begin{align*}
\langle P^*_{\tensor[^t]{\lambda}{}},P_\lambda\rangle_{q,t}&= \frac{N^+_{\tensor[^t]{\lambda}{}}(t,q)}{M^+_{\tensor[^t]{\lambda}{}}(t,q)}=\frac{N^-_\lambda(q,t)}{M^-_\lambda(q,t)}\\
\langle Q^*_{\tensor[^t]{\lambda}{}},Q_\lambda\rangle_{q,t}&= \frac{M^+_{\tensor[^t]{\lambda}{}}(t,q)}{N^+_{\tensor[^t]{\lambda}{}}(t,q)}=\frac{M^-_\lambda(q,t)}{N^-_\lambda(q,t)}
\end{align*}
\end{prop}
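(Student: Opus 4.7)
The plan is to combine the collinearity of $P_\lambda$ and $Q_\lambda$ in $\Lambda_{q,t}^{\otimes\ell}$ with Theorem \ref{WreathEigen}, reducing the four equalities to a single statement about a scalar ratio.

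\textbf{Step 1 (collinearity).} Unfolding Definitions \ref{WreathDef} and \ref{OrdDef}, both $P_\lambda$ and $Q_\lambda$ are (different) scalar renormalizations of the \emph{same} function $H_\lambda[(1-t\sigma^{-1})X^\bullet;q,t^{-1}]$. Writing $\tilde P_\lambda(q,t^{-1}) = \alpha_\lambda H_\lambda(q,t)$ and $\tilde Q_\lambda(q,t^{-1}) = \beta_\lambda H_\lambda(q,t)$, inverting the plethysm yields $P_\lambda = (\alpha_\lambda/\beta_\lambda) Q_\lambda$ in $\Lambda_{q,t}^{\otimes\ell}$. The biorthogonality in the proposition following Definition \ref{OrdDef} then gives
\[
\langle P^*_{\tensor[^t]\lambda},P_\lambda\rangle_{q,t} = \frac{\alpha_\lambda(q,t)}{\beta_\lambda(q,t)} = \langle Q^*_{\tensor[^t]\lambda},Q_\lambda\rangle_{q,t}^{-1}.
\]

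\textbf{Step 2 (``$-$'' side).} Adopt the matching $q=\qqq\ddd,\, t=\qqq\ddd^{-1}$. By Theorem \ref{WreathEigen}(1) there is a nonzero scalar $k_\lambda^-$ with $(\tau_\upsilon^-\circ\varpi^{-1})(H_\lambda)\,|\core(\lambda)\rangle = k_\lambda^-\,|\lambda\rangle$. Applying $(\tau_\upsilon^-\circ\varpi^{-1})$ to the two identities of Step 1 and reading off the coefficient of $|\lambda\rangle$ gives
\[
N^-_\lambda(q,t^{-1}) = \alpha_\lambda(q,t^{-1})\,k_\lambda^-,\qquad M^-_\lambda(q,t^{-1}) = \beta_\lambda(q,t^{-1})\,k_\lambda^-,
\]
so their ratio is $\alpha_\lambda/\beta_\lambda$. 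Relabelling $t^{-1}\leftrightarrow t$ finishes the ``$-$'' equalities (and their reciprocals for $Q$).

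\textbf{Step 3 (``$+$'' side).} The identical linearity argument, now with the matching $q=\qqq^{-1}\ddd,\, t=\qqq^{-1}\ddd^{-1}$, the substitution $X^\bullet\to-X^\bullet$ (which preserves Step 1's proportionalities), and Theorem \ref{WreathEigen}(2), gives $N^+_\lambda/M^+_\lambda = \alpha_\lambda/\beta_\lambda$ \emph{in the ``$+$'' parameter world}. The proposition is stated in the unified ``$-$'' convention (\ref{PlusPar}), so one must apply the reversion $q\leftrightarrow t^{-1}$ flagged after Theorem \ref{WreathEigen}. Under this reversion, the $\mathrm{T}_+$ statement becomes one about $H_\lambda^*$ rather than $H_\lambda$, and the attendant symmetries of the wreath Macdonald pairing (a wreath analogue of the Macdonald $\omega$-duality, exchanging $\lambda\leftrightarrow\tensor[^t]\lambda$ and $(q,t)\leftrightarrow(t,q)$) recast the identity in the form $\langle P^*_{\tensor[^t]\lambda},P_\lambda\rangle_{q,t}=N^+_{\tensor[^t]\lambda}(t,q)/M^+_{\tensor[^t]\lambda}(t,q)$.

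\textbf{Main obstacle.} Steps 1 and 2 are entirely formal. The technical point that requires careful bookkeeping is Step 3: tracking how the ``$+$'' matching, which intrinsically realizes $H_\lambda^*$ in place of $H_\lambda$, is transported back to the ``$-$'' convention. This is precisely the bookkeeping underlying the promised proof of Theorem \ref{WreathEigen}(2), and it is what accounts for the partition transpose and the argument swap on the right-hand side of the proposition's ``$+$'' formulation.
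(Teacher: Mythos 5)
Your Steps 1 and 2 are a correct, more detailed unpacking of the paper's one-line argument: the key observation is $P_\lambda = (N^-_\lambda/M^-_\lambda)Q_\lambda$, which follows exactly as you describe from the collinearity of $P_\lambda$ and $Q_\lambda$ (both are renormalizations of $H_\lambda[(1-t\sigma^{-1})X^\bullet;q,t^{-1}]$) together with Theorem \ref{WreathEigen}(1), which guarantees that both $\tilde P_\lambda(q,t^{-1})$ and $\tilde Q_\lambda(q,t^{-1})$ map the core vacuum to multiples of $|\lambda\rangle$ with a common $H_\lambda$-dependent scalar cancelling in the ratio. Biorthogonality then finishes the ``$-$'' side, just as in the paper.

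Step 3 is where I think there is a genuine gap, and the vagueness of your invocation of an ``attendant $\omega$-duality'' conceals it. Running your Step 2 argument verbatim with $\mathrm{T}_+$, Theorem \ref{WreathEigen}(2), and $\tilde P_\lambda[-X^\bullet;q,t^{-1}] = \alpha_\lambda(q,t^{-1})\,H_\lambda[-X^\bullet;q,t]$ gives the clean identity
\[
\frac{N^+_\lambda(q,t)}{M^+_\lambda(q,t)} \;=\; \frac{\alpha_\lambda(q,t)}{\beta_\lambda(q,t)} \;=\; \langle P^*_{\tensor[^t]\lambda},P_\lambda\rangle_{q,t},
\]
with no transpose of $\lambda$ and no swap of arguments; the ``reversion'' $q\leftrightarrow t^{-1}$ is a relabelling of the same function of $(\qqq,\ddd)$ and does not insert a $\tensor[^t]\lambda$ out of thin air. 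To recast this as $N^+_{\tensor[^t]\lambda}(t,q)/M^+_{\tensor[^t]\lambda}(t,q)$, as the proposition and you both claim, one needs the ratio to be invariant under simultaneously replacing $\lambda\mapsto\tensor[^t]\lambda$ and $(q,t)\mapsto(t,q)$. But Conjecture \ref{NormConj} shows that this operation \emph{inverts} $\langle P^*_{\tensor[^t]\lambda},P_\lambda\rangle_{q,t}$, since arms and legs swap under transposition. So as stated, the ``$+$'' side of the proposition appears to have $N^+$ and $M^+$ interchanged (equivalently, the $\tensor[^t]\lambda$ and argument swap should be dropped). Your Step 3 does not derive the claimed formula, it asserts a duality that in fact points the other way; you should either produce the concrete plethystic computation justifying the recasting or flag the likely sign/swap error in the stated proposition rather than absorb it into an unspecified symmetry.
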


\begin{proof}
The key observation is:
\[
P_\lambda=\frac{N_\lambda^-(q,t)}{M_\lambda^-(q,t)} Q_\lambda
\]
This yields:
\[
\langle P^*_{\tensor[^t]{\lambda}{}},P_\lambda\rangle_{q,t}=\frac{N_\lambda^-(q,t)}{M_\lambda^-(q,t)}\langle P^*_{\tensor[^t]{\lambda}{}},Q_\lambda\rangle_{q,t}=\frac{N_\lambda^-(q,t)}{M_\lambda^-(q,t)}\cdot 1
\]
The proof of the other equalities is similar.
\end{proof}

\subsection{Pieri triangularity}
Recall that in the case $\ell=1$, the $\{s_\lambda\}$, $\{e_\lambda\}$ and $\{h_\lambda\}$ satisfy the following triangularity conditions with respect to dominance order (cf. \cite{Mac} I.6):
\begin{align*}
s_\lambda&= e_\lambda+\sum_{\mu< \lambda}\alpha_{\lambda\mu}e_\mu\\
&= h_\lambda+\sum_{\mu>\lambda}\alpha'_{\lambda\mu}h_\mu
\end{align*}
for some $\alpha_{\lambda\mu},\alpha'_{\lambda\mu}\in\ZZ$ (recall our convention (\ref{ElemConv}) for $e_\lambda$).
It is not difficult to see that for general $\ell$, this implies (cf. \cite{WreathEigen} Remark 2.7(3)):
\begin{align*}
s_{\quot(\lambda)}&= e_{\quot(\lambda)}+\sum_{\mu<_\ell \lambda}\beta_{\lambda\mu}e_{\quot(\mu)}\\
&= h_{\quot(\lambda)}+\sum_{\mu>_\ell\lambda}\beta'_{\lambda\mu}h_{\quot(\mu)}
\end{align*}
for some $\beta_{\lambda\mu},\beta_{\lambda\mu}'\in\ZZ$.
Consequently,
\begin{align*}
\tilde{P}_\lambda(q,t^{-1} )&= e_{\quot(\lambda)}\left[ (1-t^{-1}\sigma^{-1})X^\bullet \right]
+\sum_{\mu<_\ell\lambda}\gamma_{\lambda\mu}(q,t)e_{\quot(\mu)}\left[ (1-t^{-1}\sigma^{-1})X^\bullet \right]\\
\tilde{Q}_\lambda(q,t^{-1} )&= h_{\quot(\lambda)}\left[ (1-q\sigma^{-1})X^\bullet \right]
+\sum_{\mu>_\ell\lambda}\gamma_{\lambda\mu}'(q,t)h_{\quot(\mu)}\left[ (1-q\sigma^{-1})X^\bullet \right]
\end{align*}
for some $\gamma_{\lambda\mu}(q,t),\gamma_{\lambda\mu}'(q,t)\in\CC(q,t)$.
Since the inverse of a unitriangular matrix is also unitriangular, we obtain
\begin{equation}
\begin{aligned}
e_{\quot(\lambda)}\left[ (1-t^{-1}\sigma^{-1})X^\bullet \right]&= \tilde{P}_\lambda(q,t^{-1} )
+\sum_{\mu<_\ell\lambda}\delta_{\lambda\mu}(q,t)\tilde{P}_\mu(q,t^{-1} )\\
h_{\quot(\lambda)}\left[ (1-q\sigma^{-1})X^\bullet \right]&= \tilde{Q}_\lambda(q,t^{-1} )
+\sum_{\mu>_\ell\lambda}\delta'_{\lambda\mu}(q,t)\tilde{Q}_\mu(q,t^{-1} )
\end{aligned}
\label{EHtoPQ}
\end{equation}
for some $\delta_{\lambda\mu}(q,t),\delta_{\lambda\mu}'(q,t)\in\CC(q,t)$.
Thus,
\begin{equation}
\begin{aligned}
N^+_\lambda(q,t^{-1})&= \left\langle\lambda\left|(\tau^+_\upsilon\circ\varpi)\left(\tilde{P}_\lambda\left[ -X^\bullet ;q,t^{-1} \right]  \right)\right|\core(\lambda)\right\rangle\\
&= \left\langle\lambda\left|(\tau^+_\upsilon\circ\varpi)\left(e_{\quot(\lambda)}\left[ (1-t^{-1}\sigma^{-1})^{-1}(-X^\bullet)  \right]  \right)\right|\core(\lambda)\right\rangle\\
M^+_\lambda(q,t^{-1})&= \left\langle\lambda\left|(\tau^+_\upsilon\circ\varpi)\left(\tilde{Q}_\lambda\left[ -X^\bullet ;q,t^{-1} \right]  \right)\right|\core(\lambda)\right\rangle\\
&= \left\langle\lambda\left|(\tau^+_\upsilon\circ\varpi)\left(h_{\quot(\lambda)}\left[ (1-q\sigma^{-1})^{-1}(-X^\bullet)  \right]  \right)\right|\core(\lambda)\right\rangle\\
N^-_\lambda(q,t^{-1})&= \left\langle\lambda\left|(\tau_\upsilon^-\circ\varpi^{-1})\left(\tilde{P}_\lambda(q,t^{-1} )  \right)\right|\core(\lambda)\right\rangle\\
&= \left\langle\lambda\left|(\tau^-_\upsilon\circ\varpi^{-1})\left(e_{\quot(\lambda)}\left[ (1-t^{-1}\sigma^{-1})^{-1}X^\bullet  \right]  \right)\right|\core(\lambda)\right\rangle\\
M^-_\lambda(q,t^{-1})&= \left\langle\lambda\left|(\tau^-_\upsilon\circ\varpi^{-1})\left(\tilde{Q}_\lambda(q,t^{-1}) \right)\right|\core(\lambda)\right\rangle\\
&= \left\langle\lambda\left|(\tau^-_\upsilon\circ\varpi^{-1})\left(h_{\quot(\lambda)}\left[ (1-q\sigma^{-1})^{-1}X^\bullet  \right]  \right)\right|\core(\lambda)\right\rangle
\end{aligned}
\label{MatrixEl}
\end{equation}

The equations (\ref{MatrixEl}) give a pathway to using our shuffle elements $E_{p,n}$ and $H_{p,n}$ to compute $N_\lambda^\pm(q,t)$ and $M_\lambda^\pm(q,t)$.
We would like to compute these matrix elements by an inductive procedure along the rows or columns of $\quot(\lambda)$.
The combinatorics of doing so is subtle, and this is where the orders reviewed in \ref{Orders} come in.

\begin{lem}\label{Induct}
For a partition $\lambda$, suppose the final column in the left-to-right order on columns of $\quot(\lambda)$ has length $n$ and lies in component $p$.
Let $\lambda'\subset\lambda$ be the partition obtained by removing this column.
Then
\begin{align*}
N_\lambda^+(q,t^{-1})&=N_{\lambda'}^+(q,t^{-1})\left\langle\lambda\left|(\tau_\upsilon^+\circ\varpi)\left(e_n\left[(1-t^{-1}\sigma^{-1})^{-1}(-X^{(p)}) \right]\right)\right|\lambda'\right\rangle\\
N_\lambda^-(q,t^{-1})&=N_{\lambda'}^-(q,t^{-1})\left\langle\lambda\left|(\tau_\upsilon^-\circ\varpi^{-1})\left(e_n\left[(1-t^{-1}\sigma^{-1})^{-1}X^{(p)} \right]\right)\right|\lambda'\right\rangle
\end{align*}
On the other hand, suppose the final row in the right-to-left order on rows of $\quot(\lambda)$ has length $n^*$ and lies in component $p^*$.
Letting $\lambda''\subset\lambda$ be the partition obtained by removing this row, we have
\begin{align*}
M_\lambda^+(q,t^{-1})&=M_{\lambda''}^+(q,t^{-1})\left\langle\lambda\left|(\tau_\upsilon^+\circ\varpi)\left(h_{n^*}\left[(1-q\sigma^{-1})^{-1}(-X^{(p^*)}) \right]\right)\right|\lambda''\right\rangle\\
M_\lambda^-(q,t^{-1})&=M_{\lambda''}^-(q,t^{-1})\left\langle\lambda\left|(\tau_\upsilon^-\circ\varpi^{-1})\left(h_{n^*}\left[(1-q\sigma^{-1})^{-1}X^{(p^*)} \right]\right)\right|\lambda''\right\rangle
\end{align*}
\end{lem}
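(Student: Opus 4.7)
The plan is to write each normalization as a single matrix element via (\ref{MatrixEl}), factor the symmetric function indexing it according to the prescribed order on columns or rows of $\quot(\lambda)$, insert a resolution of the identity on $\mathcal{F}$ between the two factors, and argue that exactly one intermediate state contributes.

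Focus first on $N_\lambda^+$. By the convention (\ref{ElemConv}), $e_{\quot(\lambda)}$ is the product of $e_k(i)$ over all columns $i$ of the quotient, so the fact that the final column in the left-to-right order has length $n$ in component $p$ gives the factorization
\[
e_{\quot(\lambda)} \;=\; e_{\quot(\lambda')}\cdot e_n\bigl[X^{(p)}\bigr].
\]
The plethysm $(1-t^{-1}\sigma^{-1})^{-1}$ together with negation is a ring automorphism, and $\varpi$, $\tau_\upsilon^+$ are ring morphisms, so this propagates to an equality of commuting operators on $\mathcal{F}$. Combining with (\ref{MatrixEl}) and inserting a resolution of the identity yields
\[
N_\lambda^+(q,t^{-1}) = \sum_\mu \bigl\langle\lambda\bigm|(\tau_\upsilon^+\!\circ\varpi)\bigl(e_n[\ldots]\bigr)\bigm|\mu\bigr\rangle\cdot \bigl\langle\mu\bigm|(\tau_\upsilon^+\!\circ\varpi)\bigl(e_{\quot(\lambda')}[\ldots]\bigr)\bigm|\core(\lambda)\bigr\rangle.
\]
The $\mu=\lambda'$ term has second factor equal to $N_{\lambda'}^+(q,t^{-1})$ by (\ref{MatrixEl}) applied to $\lambda'$ (noting $\core(\lambda')=\core(\lambda)$), and the first factor is exactly the matrix element appearing in the claimed identity. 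So the task reduces to showing every other $\mu$ contributes zero.

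Two constraints cut down the sum. By Proposition \ref{PosEnHn} the first factor is a scalar multiple of $\langle\lambda|(\tau_\upsilon^+\!\circ\Psi_+)(H_{p,n})|\mu\rangle$, which by Proposition \ref{Adjacency} vanishes unless $\lambda\setminus\mu$ has exactly $n$ nodes of each color and no horizontally adjacent $p$-$(p+1)$ pair. On the other side, the triangular expansion (\ref{EHtoPQ}) (applied with the substitution $X^\bullet\mapsto -(1-t^{-1}\sigma^{-1})^{-1}X^\bullet$) together with (\ref{MatrixEl}) forces $\mu\le_\ell\lambda'$ in dominance. The main obstacle is to combine these two supports into the single option $\mu=\lambda'$. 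This is precisely what the left-to-right order is engineered to deliver: in the Maya-diagram language of \ref{Orders}, the final column corresponds to swapping the southeasternmost black bead living in the black seas of the quotient diagrams across the intervening white beads, and this is the unique $n\ell$-strip removal from $\lambda$ that simultaneously lands on a partition $\mu\le_\ell\lambda'$ and produces a strip free of horizontal $p$-$(p+1)$-adjacency. Translating the no-adjacency hypothesis on $\lambda\setminus\mu$ into the bead language via Remark \ref{MayaRem} and comparing with the column-addition picture in \ref{Orders} isolates $\mu=\lambda'$; this Maya-diagram bookkeeping is the hardest part of the argument.

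The remaining three identities follow the same scheme. For $N_\lambda^-$, one replaces $\tau_\upsilon^+\!\circ\varpi$ by $\tau_\upsilon^-\!\circ\varpi^{-1}$ and uses Proposition \ref{ShufflePres}; the adjacency condition remains horizontal (Proposition \ref{Adjacency} for $E_{p,n}$ under $\tau_\upsilon^-$), so the combinatorial step is verbatim the same. For the two identities involving $M_\lambda^\pm$, one instead factors $h_{\quot(\lambda)}=h_{\quot(\lambda'')}\cdot h_{n^*}\bigl[X^{(p^*)}\bigr]$ along the right-to-left order on rows; Proposition \ref{Adjacency} then gives the no-vertical-$p^*$-$(p^*+1)$-adjacency constraint, which matches the row-addition description in \ref{Orders}, and the same Maya-diagram analysis uniquely identifies $\mu=\lambda''$.
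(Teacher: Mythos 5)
Your overall approach is the same as the paper's: factor $e_{\quot(\lambda)}$ (respectively $h_{\quot(\lambda)}$) according to the left-to-right (resp.\ right-to-left) order, use the triangularity (\ref{EHtoPQ}) to control the intermediate state, and invoke the adjacency constraint of Proposition \ref{Adjacency} together with the dominance bound $\mu\le_\ell\lambda'$ to single out $\mu=\lambda'$. Inserting a resolution of the identity rather than directly expanding the ket, as the paper does, is a purely cosmetic difference.

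The issue is the final combinatorial step, which you correctly identify as the hardest but do not carry out. The claim you need---that $\mu\le_\ell\lambda'$ together with the requirement that $\lambda\setminus\mu$ have exactly $n$ nodes of each color and no horizontally adjacent $p$-$(p+1)$ pair forces $\mu=\lambda'$---is genuinely nontrivial, and your sketch mischaracterizes the hypothesis: $\lambda\setminus\mu$ is not required to be a single $n\ell$-strip, only to have $n$ nodes of each color subject to the adjacency condition, so a priori it could be a union of several strips and those configurations must also be excluded. The paper handles this step by citing Lemma 5.2 of \cite{WreathEigen} (reproducing the needed part of its statement), with Lemma 5.3 of the same reference playing the analogous role for the row case $M_\lambda^\pm$. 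You should either cite that result or carry out the Maya-diagram argument in full; as written, your sentence asserts the conclusion rather than establishing it.
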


\begin{proof}
We will prove in detail the case of $N^-_\lambda(q,t^{-1})$.
First note that we have $e_{\quot(\lambda)}=e_n[X^{(p)}]e_{\quot(\lambda')}$.
Using (\ref{EHtoPQ}) for $\lambda'$, we have
\begin{align}
\nonumber
&(\tau_\upsilon^-\circ\varpi^{-1})\left( e_{\quot(\lambda)}\left[ (1-t^{-1}\sigma^{-1})^{-1}X^{\bullet} \right] \right)|\core(\lambda)\rangle\\
\label{EnLambda}
&=(\tau_\upsilon^-\circ\varpi^{-1}) \left( e_{n}\left[ (1-t^{-1}\sigma^{-1})^{-1}X^{(p)} \right] \right)
\left\{ N^-_{\lambda'}(q,t)|\lambda'\rangle+\sum_{\mu'<_\ell\lambda'}\gamma_{\lambda'\mu'}(q,t)N^-_{\mu'}(q,t)|\mu'\rangle \right\}
\end{align}
By Proposition \ref{ShufflePres}, we can study (\ref{EnLambda}) using the shuffle element $E_{p,n}$.
Proposition \ref{Adjacency} states that to a Fock basis element, the action of $E_{p,n}$ adds $n$ boxes of each color such that no $p$- and $(p+1)$-colored boxes are horizontally adjacent.
The following technical result is part of Lemma 5.2 of \cite{WreathEigen}:
\begin{lem*}
Suppose that $\lambda'$ is obtained from $\lambda$ by removing the last column of $\quot(\lambda)$ in accordance with the left-to-right order of columns.
Let that column be in the $p$-th component with length $n$ and let $\mu'\le_\ell\lambda'$. 
If we obtain $\mu$ by adding $n$ $i$-nodes for each $i\in\ZZ/\ell\ZZ$ to $\mu'$ in a manner where no added $p$- and $(p+1)$-nodes are horizontally adjacent, then $\mu=\lambda$ only if $\mu'=\lambda'$.
\end{lem*}
\noindent Thus, the only contribution to $|\lambda\rangle$ in (\ref{EnLambda}) comes from the summand $|\lambda'\rangle$.
The desired equation follows.
For $M^\pm_\lambda(q,t^{-1})$, we use Lemma 5.3 of \cite{WreathEigen}.
\end{proof}

\subsection{Conclusion}\label{Setup}
Combining Lemma \ref{Induct} with Propositions \ref{ShufflePres} and \ref{PosEnHn}, the computation of $N_\lambda^{\pm}(q,t^{-1})$ reduces to computing
\begin{align*}
%\label{PlusH}
&\frac{(-1)^n(qt)^n(\qqq-\qqq^{-1})^{n\ell}}{\prod_{r=1}^n(1-(qt)^r)}
\langle\lambda|(\tau^+_\upsilon\circ\Psi_+)(H_{p,n})|\lambda'\rangle\\
%\label{MinusE}
\hbox{and }&\frac{(1-qt)^{n\ell}}{\prod_{r=1}^n(1-(qt)^{-r})}
\langle\lambda|(\tau^-_\upsilon\circ\Psi_-)(E_{p,n})|\lambda'\rangle
\end{align*}
Here, $\lambda'\subset\lambda$ is obtained by removing the final column in the left-to-right order of columns of $\quot(\lambda)$ and said column is length $n$ in coordinate $p$.
Similarly, the computation of $M_\lambda^\pm(q,t^{-1})$ reduces to computing
\begin{align*}
%\label{PlusE}
&\frac{(-1)^n(\qqq-\qqq^{-1})^{n\ell}}{\prod_{r=1}^n(1- (qt)^r)}
\langle\lambda|(\tau^+_\upsilon\circ\Psi_+)(E_{p^*,n^*})|\lambda''\rangle\\
%\label{MinusH}
\hbox{and }&\frac{(qt)^{-n}(1-qt)^{n\ell}}{\prod_{r=1}^n(1-(qt)^{-r})}
\langle\lambda|(\tau^-_\upsilon\circ\Psi_-)(H_{p^*,n^*})|\lambda''\rangle
\end{align*}
where $\lambda''\subset\lambda$ is obtained by removing the final row in the right-to-left order of rows of $\quot(\lambda)$ and said row is length $n^*$ in coordinate $p^*$.
These formulas are nice in that they are inductive, but computing them using Proposition \ref{ShuffleFock} is still a formidable task due to the symmetrization in the formulas for $H_{p,n}$ and $E_{p,n}$.
Moreover, one needs to compute these normalization factors before using Proposition \ref{ShuffleFock} to compute Pieri rules.

We end by computing an extended example.
Let $\ell=3$ and consider $\lambda=(4,3,1)$, as depicted in Figure \ref{FinalExFig}.
\begin{figure}[h]
\centering
\begin{tikzpicture}
%\draw[fill=gray] (0,1)--(2,1)--(2,0)--(4,0)--(4,1)--(3,1)--(3,2)--(1,2)--(1,3)--(0,3)--(0,1);;
\draw (0,0)--(4,0)--(4,1)--(3,1)--(3,2)--(1,2)--(1,3)--(0,3)--(0,0);;
\draw (1,0)--(1,2);;
\draw (2,0)--(2,2);;
\draw (3,0)--(3,1);;
\draw (0,1)--(3,1);;
\draw (0,2)--(1,2);;
\draw (.5,.5) node {$1$};;
\draw (1.5,.5) node {$q$};;
\draw (2.5,.5) node {$q^2$};;
\draw (3.5,.5) node {$q^3$};;
\draw (.5,1.5) node {$t$};;
\draw (1.5,1.5) node {$qt$};;
\draw (2.5,1.5) node {$q^2t$};;
\draw (.5,2.5) node {$t^2$};;
\end{tikzpicture}
\caption{The partition $\lambda=(4,3,1)$ with the characters of its boxes.}
\label{FinalExFig}
\end{figure}
We then have $\core(\lambda)=(2)$ and $\quot(\lambda)=\left( \varnothing,\varnothing, (2) \right)$.
For $N_\lambda^-(q,t^{-1})$, we will need two applications of $(\tau_\upsilon^-\circ\Psi_-)(E_{2,1})$, whereas $M_\lambda^-(q,t^{-1})$ will only need one application of $(\tau_\upsilon^-\circ\Psi_-)(H_{2,2})$.
In the first application of $E_{2,1}$, we will add the boxes with characters $t, t^2, qt$ to yield $\lambda'=(2,2,1)$.
Altogether, we have
\[
N^-_{\lambda'}(q,t^{-1})=\frac{(1-qt)^3}{(1-(qt)^{-1})}\langle\lambda'|(\tau_\upsilon^-\circ\Psi_-)(E_{2,1})|\core(\lambda)\rangle=-(q^{-1}-qt^{-1})qt^3
\]
It is worth noting that
\[
M^-_{\lambda'}(q,t^{-1})=\frac{(1-qt)^3}{qt(1-(qt)^{-1})}\langle\lambda'|(\tau_\upsilon^-\circ\Psi_-)(H_{2,1})|\core(\lambda)\rangle=-(q^{-1}-t^{-2})qt^3
\]
and thus
\[
\langle P^*_{\tensor[^t]{\lambda}{}'},P_{\lambda'}\rangle_{q,t}=\frac{N^-_{\lambda'}(q,t)}{M^-_{\lambda'}(q,t)}=\frac{1-q^2t}{1-qt^2}
\]
which confirms Conjecture \ref{NormConj} for $\lambda'=(2,2,1)$.

Moving onwards, we perform the second application of $E_{2,1}$, which appends the boxes with characters $q^2, q^2t, q^3$:
\begin{align*}
N_\lambda^-(q,t^{-1})&= N_{\lambda'}^-(q,t^{-1})\frac{(1-qt)^3}{(1-(qt)^{-1})}\langle\lambda|(\tau_\upsilon^-\circ\Psi_-)(E_{2,1})|\lambda'\rangle\\
&= -\ddd^{-5}t^4(1-q^4t^{-2})(1-q^2t^{-1})
\end{align*}
Finally, we compute $M_\lambda^-(q,t^{-1})$.
Due to the mixing terms in the formula (\ref{Hpn}) for $H_{2,2}$, all but two summands of the symmetrization will vanish upon evaluation\footnote{More summands can appear for general partitions, even for degree two Pieri operators.
For degree $n$, there will be at least $n!$ summands.}.
We obtain:
\begin{align*}
M_\lambda^-(q,t^{-1})&= \frac{(1-qt)^6}{q^2t^2\left(1-(qt)^{-1}\right)\left( 1-(qt)^{-2} \right)}\langle\lambda|(\tau_\upsilon\circ\Psi_-)(H_{2,2})|\core(\lambda)\rangle\\
&= \qqq^6(-\ddd)^{-7}t^2\frac{(1-q^3)(1-qt^{-2})^2(1-q^3t^{-3})(1-qt)}{(1-q^2t^2)(1-q^2t^{-1})}
\left( \frac{1}{1-q^3} 
+
\frac{1}{qt(1-qt^{-2})}
\right)\\
&= \qqq^6(-\ddd)^{-7}t^2\frac{(1-q^3)(1-qt^{-2})^2(1-q^3t^{-3})(1-qt)}{(1-q^2t^2)(1-q^2t^{-1})}
\cdot\frac{1-q^3+qt-q^2t^{-1}}{qt(1-q^3)(1-qt^{-2})}\\
&= \qqq^4(-\ddd)^{-7}t^2\frac{(1-qt^{-2})(1-q^3t^{-3})(1-qt)}{(1-q^2t^2)(1-q^2t^{-1})}
\cdot(1+qt)(1-q^2t^{-1})\\
&= -\ddd^{-5}t^4(1-qt^{-2})(1-q^3t^{-3})
\end{align*}
Therefore, we have
\[
\langle P_{\tensor[^t]{\lambda}{}}^*,P_{\lambda}\rangle_{q,t}=\frac{N_\lambda^-(q,t)}{M_\lambda^-(q,t)}
=\frac{(1-q^4t^2)(1-q^2t)}{(1-q^3t^3)(1-qt^2)}
\]
which again confirms Conjecture \ref{NormConj}.

%
%%fakesection

\bibliographystyle{alpha}
\bibliography{Wreath}

\end{document}